\theoremstyle{plain}
\newtheorem{corollary}{Corollary}
\newtheorem{lemma}{Lemma}
\newtheorem{proposition}{Proposition}
\newtheorem{remark}{Remark}
\numberwithin{equation}{section}
\begin{document}
\title[Strong Euler approximation]{On the rate of convergence of strong
Euler approximation for SDEs driven by Levy processes}
\author{R. Mikulevi\v{c}ius and Fanhui Xu}
\address{University of Southern California, Los Angeles}
\date{June 22, 2016}
\subjclass{60H10, 60H35, 41A25}
\keywords{Strong solutions, Levy processes, strong approximation}

\begin{abstract}
A SDE driven by an $\alpha $-stable process, $\alpha \in \lbrack 1,2),$ with
Lipshitz continuous coefficient and $\beta $-H\"{o}lder drift is considered.
The existence and uniqueness of a strong solution is proved when $\beta
>1-\alpha /2$ by showing that it is $L_{p}$-limit of Euler approximations.
The $L_{p}$-error (rate of convergence) is obtained for a nondegenerate
truncated and nontruncated driving process. The rate in the case of Lipshitz
continuous coefficients is derived as well.
\end{abstract}

\maketitle
\tableofcontents

\section{Introduction}

Let $\left( \varOmega,\mathcal{F},\mathbf{P}\right) $ be a complete
probability space, and $\mathbb{F}=\left( \mathcal{F}_{t}\right) _{t\in
\lbrack 0,1)}$ be a filtration of $\sigma $-algebras satisfying the usual
conditions. Let $N\left( dt,dy\right) $ be adapted Poisson point measure on $%
[0,1)\times \mathbf{R}_{0}^{d}$ ($\mathbf{R}_{0}^{d}=\mathbf{R}%
^{d}\backslash \left\{ 0\right\} $) such that%
\begin{equation*}
\mathbf{E}N\left( dt,dy\right) =\rho \left( y\right) \frac{dydt}{\left\vert
y\right\vert ^{d+\alpha }},
\end{equation*}%
where $\rho \left( y\right) $ is a bounded measurable function, and $\alpha
\in \lbrack 1,2)$. We consider the following stochastic differential
equation (SDE) in time interval $\left[ 0,1\right) $%
\begin{equation}
X_{t}=x_{0}+\int_{0}^{t}b\left( X_{s}\right) ds+\int_{0}^{t}G\left(
X_{s-}\right) dL_{s}.  \label{m1}
\end{equation}%
The drift coefficient $b:\mathbf{R}^{d}\longrightarrow \mathbf{R}^{d}$ is a
bounded function of $\beta $-H\"{o}lder continuity in whole space with $%
\beta \in \left( 0,1\right] $, $G\left( x\right) ,x\in \mathbf{R}^{d},$ is a
Lipshitz continuous bounded $d\times d$ -matrix$,$ and for $t\in \lbrack
0,1),$ 
\begin{eqnarray*}
L_{t} &=&\int_{0}^{t}\int yq\left( ds,dy\right) ,\text{ if }\alpha \in
\left( 1,2\right) , \\
L_{t} &=&\int_{0}^{t}\int_{\left\vert y\right\vert >1}yN\left( ds,dy\right)
+\int_{0}^{t}\int_{\left\vert y\right\vert \leq 1}yq(ds,dy),\text{ if }%
\alpha =1,
\end{eqnarray*}%
where 
\begin{equation*}
q\left( dt,dy\right) =N\left( dt,dy\right) -\rho \left( y\right) \frac{dydt}{%
\left\vert y\right\vert ^{d+\alpha }}
\end{equation*}%
is a martingale measure. We will need the following assumptions for $\rho $.

\textbf{S}$(c_{0}$). \emph{(i)~}$\rho \left( y\right) \geq c_{0},y\in 
\mathbf{R}_{0}^{d}$\emph{\ for some }$c_{0}>0$\emph{;\newline
(ii)\thinspace ~}$\rho \left( \lambda y\right) =\rho \left( y\right) $\emph{%
\ for all }$\lambda >0,$\emph{\ }$y\in \mathbf{R}_{0}^{d}$\emph{, i.e., }$%
\rho $\emph{\ is a }$0$\emph{-homogeneous function;\newline
(iii) } 
\begin{equation}
\rho \left( -y\right) =\rho \left( y\right) ,y\in \mathbf{R}_{0}^{d},\text{
if }\alpha =1.  \label{sym}
\end{equation}

We are going to study the Euler approximation to (\ref{m1}) defined as 
\begin{equation}
X_{t}^{n}=x_{0}+\int_{0}^{t}b\left( X_{\pi _{n}\left( s\right) }^{n}\right)
ds+\int_{0}^{t}G\left( X_{\pi _{n}\left( s\right) }^{n}\right) dL_{s},
\label{m2}
\end{equation}%
where $\pi _{n}\left( s\right) =k/n$ if $k/n\leq s<(k+1)/n,n=1,2,\ldots
,k=0,\ldots ,n-1.$ Note that the driving process $L_{t}$ does not have $%
\alpha $-moment.

Sometimes in (\ref{m1}) $L_{t}$ is replaced by its truncation%
\begin{equation*}
L_{t}^{0}=\int_{0}^{t}\int_{\left\vert y\right\vert \leq 1}yq\left(
dr,dy\right) ,t\in \lbrack 0,1),
\end{equation*}%
i.e., the following equation and the accompanying Euler approximation are
considered instead, 
\begin{equation}
Y_{t}=x_{0}+\int_{0}^{t}b\left( Y_{s}\right) ds+\int_{0}^{t}G\left(
Y_{s-}\right) dL_{s}^{0},t\in \lbrack 0,1),  \label{m1'}
\end{equation}%
and%
\begin{equation}
Y_{t}^{n}=x_{0}+\int_{0}^{t}b\left( Y_{\pi _{n}\left( s\right) }^{n}\right)
ds+\int_{0}^{t}G\left( Y_{\pi _{n}\left( s\right) }^{n}\right)
dL_{s}^{0},t\in \lbrack 0,1).  \label{m2'}
\end{equation}%
This case would be the other concern of our note. It is well-known that the
truncated driving process $L_{t}^{0}$ has all moments.

In \cite{p}, the existence and uniqueness of strong solutions to (\ref{m1})
was considered by assuming $G=I_{d}$, the $d\times d$-identity matrix, and
with $L_{t}$ being nondegenerate $\alpha $-stable symmetric, $\alpha \in
\lbrack 1,2),\beta >1-\alpha /2$. The pathwise uniqueness for (\ref{m1}) was
proved by applying Gronwall's lemma and using the elliptic version of the
Kolmogorov equation and regularity of its solution, to represent the H\"{o}%
lder drift $b\left( x\right) $ by an expression which is \textquotedblleft
Lipshitz\textquotedblright. This approach, \textquotedblleft It\^{o}-Tanaka
trick\textquotedblright, was inspired by considerations in \cite{fp}, see
the infinite dimensional generalization in \cite{dp} for $G=I$ and $L=W$
being Wiener, or a finite dimensional generalization (using parabolic
backward Kolmogorov equations) in \cite{ff}, again with $G=I_{d},L=W $, and $%
b$ having some integrability properties.

On the other hand, in \cite{ta} a truncated equation (\ref{m1'}) and its
Euler approximation (\ref{m2'}) were considered with $G=I_{d},\rho =1$.
Using the same It\^{o}-Tanaka trick and assuming that a strong solution $%
Y_{t}$ exists with $\alpha +\beta >2,\beta \in \left( 0,1\right) $, the rate
of strong convergence was derived. It was proved in \cite{ta} that 
\begin{equation}
\mathbf{E}\left[ \sup_{t}\left\vert Y_{t}^{n}-Y_{t}\right\vert ^{p}\right]
\leq C_{p}\left\{ 
\begin{array}{cc}
n^{-1} & \text{if }p\geq 2/\beta , \\ 
n^{-p\beta /2} & \text{if }2\leq p<2/\beta .%
\end{array}%
\right.  \label{11}
\end{equation}

In this note, using It\^{o}-Tanaka trick again, we derive the rate of
convergence of Euler approximations for both (\ref{m1}) and (\ref{m1'}). We
show that, under the imposed assumptions, $X^{n},Y^{n}$ are Cauchy sequences
whose limits solve (\ref{m1}) and (\ref{m1'}) respectively.

For (\ref{m1}), the following holds. Note that only the moments $p<\alpha $
exist in this case.

\begin{proposition}
\label{pro2}Let $\alpha \in \lbrack 1,2),\mathbf{S}\left( c_{0}\right) $
hold, $\beta \in (0,1)$ and $\beta >1-\alpha /2$. Assume $b\in C^{\beta
}\left( \mathbf{R}^{d}\right) ,$ $G$ is bounded Lipshitz and $\left\vert
\det G\left( x\right) \right\vert \geq c_{0}>0,x\in \mathbf{R}^{d}$, i.e. $G$
is uniformly nondegenerate. Let for some $c_{1}>0,$ 
\begin{equation*}
\left\vert \rho \left( y\right) -\rho \left( z\right) \right\vert \leq
c_{1}\left\vert y-z\right\vert ^{\beta }\text{ for all }\left\vert
y\right\vert =\left\vert z\right\vert =1.
\end{equation*}
Then there is a unique strong solution to (\ref{m1})$.$ Moreover for each $%
p\in \left( 0,\alpha \right) $, there is $C$ depending on $d,\alpha ,\beta
,b,G,p,\rho $ such that%
\begin{equation*}
\mathbf{E}\left[ \sup_{t}\left\vert X_{t}^{n}-X_{t}\right\vert ^{p}\right]
\leq Cn^{-p\beta /\alpha }.
\end{equation*}
\end{proposition}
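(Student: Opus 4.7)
\emph{Plan.} The strategy follows the It\^{o}--Tanaka route of \cite{p,ta}: rewrite the merely H\"{o}lder drift $b$ as a differential of a smoother function via an auxiliary elliptic PDE, and then deduce both the $L_p$-Cauchy property of $\{X^n\}$ (which delivers existence and uniqueness of a strong solution) and the rate $n^{-p\beta/\alpha}$ from a single comparison argument with a second Euler approximation $X^m$ (and, for uniqueness, with an arbitrary second strong solution).

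The first step is to solve, componentwise, the elliptic system
\begin{equation*}
\lambda u - \mathcal{A}u - b\cdot\nabla u = b \quad\text{on }\mathbf{R}^d,
\end{equation*}
where $\mathcal{A}$ is the non-local generator built from $G(x)\,dL$. Under $\mathbf{S}(c_0)$, the uniform nondegeneracy of $G$, and the H\"{o}lder hypotheses on $b,G,\rho$, H\"{o}lder--Schauder theory for $\alpha$-stable-type operators furnishes a solution $u\in C^{\alpha+\beta}(\mathbf{R}^d;\mathbf{R}^d)$ with $|u|_\infty+|\nabla u|_\infty$ small for large $\lambda$. The hypothesis $\beta>1-\alpha/2$ forces $\alpha+\beta>1+\alpha/2$, so in particular $\nabla u$ is H\"{o}lder with exponent exceeding $\alpha/2$ and $I+\nabla u$ is boundedly invertible. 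The PDE identity then rewrites $b$ as a combination of Lipschitz quantities plus $-\mathcal{A}u$, to which It\^{o}'s formula applies cleanly.

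Apply It\^{o}'s formula to $u(X_t^n)$ and insert the PDE identity at $x=X_s^n$; rearranging yields
\begin{equation*}
\int_0^t b(X^n_{\pi_n(s)})\,ds = u(x_0)-u(X_t^n)+\lambda\!\int_0^t u(X_s^n)\,ds + M_t^n + R_t^n,
\end{equation*}
where $M^n$ is a purely discontinuous martingale and $R_t^n$ collects the two discretization errors $[\mathcal{A}-\mathcal{A}^n]u(X_s^n)$ (with $\mathcal{A}^n$ the generator frozen at $X^n_{\pi_n(s)}$ as dictated by the Euler dynamics \eqref{m2}) and $[b(X^n_{\pi_n(s)})-b(X_s^n)]\cdot[I+\nabla u(X_s^n)]$. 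Doing the same for $X^m$ and subtracting, the Lipschitz continuity of $u,\nabla u,G$ together with an $L_p$-maximal inequality for pure-jump integrals (valid for $p<\alpha$ via separate treatment of large and compensated small jumps) and a Gronwall step produce
\begin{equation*}
\mathbf{E}\sup_{s\leq t}|X_s^n-X_s^m|^p \leq C\bigl(\mathbf{E}|R_t^n|^p+\mathbf{E}|R_t^m|^p\bigr).
\end{equation*}
The key one-step bound $\mathbf{E}|X_s^n-X^n_{\pi_n(s)}|^p\leq Cn^{-p/\alpha}$ for $p<\alpha$ follows from the $\alpha$-stable scaling of $L$ on intervals of length $1/n$; combined with $\beta$-H\"{o}lder continuity of $b$ and of the coefficients entering $\mathcal{A}$, this gives $\mathbf{E}|R_t^n|^p\leq Cn^{-p\beta/\alpha}$. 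Hence $\{X^n\}$ is $L_p$-Cauchy with the claimed rate, the limit $X$ solves \eqref{m1}, and passing $m\to\infty$ yields the stated bound.

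The main obstacle is controlling the non-local It\^{o} correction: the jumps of $u(X^n)$ involve $G(X^n_{\pi_n(s-)})$ while the target operator $\mathcal{A}$ in the PDE is evaluated with $G(X^n_{s-})$, so the quantity $[\mathcal{A}-\mathcal{A}^n]u(X^n_s)$ mixes the H\"{o}lder modulus of $\mathcal{A}u$ (which is where the H\"{o}lder hypothesis on $\rho$ is needed) with the sharp moment estimate on $X^n_s-X^n_{\pi_n(s)}$. Because $L$ has no $\alpha$-th moment, the usual Burkholder--Davis--Gundy inequalities are unavailable and every estimate must be performed at the $L_p$-level with $p<\alpha$, via truncation of jumps. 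Balancing the $\alpha/2$-gain in regularity of $u$ over $b$ against the loss from this jump analysis is precisely what forces the threshold $\beta>1-\alpha/2$ and produces the exponent $\beta/\alpha$.
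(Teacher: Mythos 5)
Your outline is essentially the Itô--Tanaka strategy that the paper also follows, but with one genuine structural choice made differently: you rewrite the Hölder drift via the \emph{elliptic} resolvent equation $\lambda u-\mathcal{A}u-b\cdot\nabla u=b$ and use large $\lambda$ to make $|u|_\infty+|\nabla u|_\infty$ small (the Priola route, cf.\ \cite{p}), whereas the paper solves the \emph{parabolic} backward Kolmogorov equation \eqref{m3} on $[0,T]$ with terminal data $v(T,\cdot)=0$ (Corollary \ref{lem5}) and extracts the needed smallness from the time-continuity estimate $|v(t,\cdot)-v(s,\cdot)|_{\alpha/2+\mu}\le C(t-s)^{1/2}|f|_\mu$ together with $v(T,\cdot)=0$. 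Both mechanisms deliver the contraction factor needed to absorb $\mathbf{E}\sup|X^{n,m}|^p$ into the left-hand side; the parabolic version has the minor advantage of aligning the auxiliary PDE with the time-local self-improvement argument used to globalize (Lemma \ref{gle}), while the elliptic version keeps the PDE time-independent. A few points you should sharpen to make the outline a proof: (1) since $\alpha\in(1,2)$ the correct drift in the Kolmogorov equation is $\tilde b=b-1_{\alpha\in(1,2)}G\int_{|y|>1}y\rho\,dy/|y|^{d+\alpha}$, not $b$, because $L$ is compensated only on $\{|y|\le1\}$; (2) the ``Gronwall step'' is really the partition-and-iterate argument of Lemma \ref{gle} — for $p<1$ the literal Gronwall inequality is delicate, and the paper instead absorbs a $(T-S)^{p/2}$ coefficient on a short interval and concatenates; (3) the remainder $[\mathcal{A}-\mathcal{A}^n]u$ needs the explicit Taylor decomposition into compensated small-jump, large-jump, and compensator pieces ($D^{n,3}$–$D^{n,7}$ in the paper), each of which requires its own moment estimate ($\alpha$-moment with logarithm for the small-jump martingale, $p<\alpha$-moment with no compensator for large jumps), rather than a single maximal inequality. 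Your identification of the role of $\beta>1-\alpha/2$ (giving $\alpha+\beta-1>\alpha/2$ so that $\nabla u$ is Hölder with exponent exceeding $\alpha/2$) and of the one-step scaling $\mathbf{E}|X^n_s-X^n_{\pi_n(s)}|^p\lesssim n^{-p/\alpha}$ matches the paper's use of Corollary \ref{co1}.
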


For (\ref{m1'}) we derive the following statement which extends and improves
the results in (\cite{ta}), see (\ref{11}).

\begin{proposition}
\label{t1}Let $\alpha \in \lbrack 1,2),\mathbf{S}\left( c_{0}\right) $ hold, 
$\beta \in (0,1)$ and $\beta >1-\alpha /2$. Assume $b\in C^{\beta }\left( 
\mathbf{R}^{d}\right) ,$ $G$ is bounded Lipshitz and $\left\vert \det
G\left( x\right) \right\vert \geq c_{0}>0,x\in \mathbf{R}^{d}$, i.e. $G$ is
uniformly nondegenerate. Let for some $c_{1}>0,$ 
\begin{equation*}
\left\vert \rho \left( y\right) -\rho \left( z\right) \right\vert \leq
c_{1}\left\vert y-z\right\vert ^{\beta }\text{ for all }\left\vert
y\right\vert =\left\vert z\right\vert =1.
\end{equation*}
Then there is a unique strong solution to (\ref{m1'})$.$ Moreover for each $%
p\in \left( 0,\infty \right) $, there is $C$ depending on $d,\alpha ,\beta
,b,G,p,\rho $ such that%
\begin{equation*}
\mathbf{E}\left[ \sup_{t}\left\vert Y_{t}^{n}-Y_{t}\right\vert ^{p}\right]
\leq C\left\{ 
\begin{array}{ll}
n^{-p\beta /\alpha } & \text{if }0<p<\alpha /\beta , \\ 
\left( n/\ln n\right) ^{-1} & \text{if }p=\alpha /\beta , \\ 
n^{-1} & \text{if }p>\alpha /\beta .%
\end{array}%
\right.
\end{equation*}
\end{proposition}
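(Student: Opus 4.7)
The plan is to apply the It\^o--Tanaka regularization trick, adapted to the truncated driver $L^{0}$, and then to deduce the $L^{p}$-rate through a Gronwall argument in transformed coordinates.

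\emph{Setup via PDE.} For $\lambda$ sufficiently large I solve a vector resolvent equation $\lambda u-\mathcal{L}u=b$, where $\mathcal{L}$ is the (possibly frozen-coefficient) generator associated with the L\'evy driver $L^{0}$, so that the H\"older drift $b$ is absorbed into $u$. Since $\beta >1-\alpha /2$ and $b\in C^{\beta }$, the elliptic Schauder theory for nonlocal operators of order $\alpha$ (developed in the analytic sections preceding this statement, which also exploit the uniform H\"older modulus imposed on $\rho$) delivers $u\in C^{\alpha +\beta }(\mathbf{R}^{d})$ with a favorable dependence of $\|u\|_{C^{\alpha +\beta }}$ on $\lambda$. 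Because $\alpha +\beta >1$, interpolation lets me take $\lambda$ so large that $\|\nabla u\|_{\infty }\leq 1/2$, making $\Phi (x):=x+u(x)$ a bi-Lipschitz diffeomorphism of $\mathbf{R}^{d}$. All estimates are then carried out on $\Phi (Y)$ and $\Phi (Y^{n})$ and transferred back.

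\emph{Comparison via It\^o's formula.} I apply It\^o's formula to $\Phi (Y_{t})$ and $\Phi (Y_{t}^{n})$. Along the exact solution the PDE converts the bad drift $b$ into the bounded Lipschitz term $\lambda u(Y_{t})$ and leaves a pure-jump martingale. Along the Euler scheme the analogous identity produces $\lambda u(Y_{\pi _{n}(t)}^{n})$ plus residuals that measure the mismatch between the frozen coefficients $b(Y_{\pi _{n}(s)}^{n})$, $G(Y_{\pi _{n}(s)}^{n})$ used in (\ref{m2'}) and the values at time $s$ that naturally arise from It\^o's formula. After a BDG estimate on the martingale parts and a Gronwall step applied to $\mathbf{E}\sup_{s\leq t}|\Phi (Y_{s}^{n})-\Phi (Y_{s})|^{p}$, the remaining quantity to control is
\begin{equation*}
\sup_{s}\mathbf{E}\,|Y_{s}^{n}-Y_{\pi _{n}(s)}^{n}|^{p\beta },
\end{equation*}
arising from the worst residual, namely the $\beta $-H\"older mismatch in $b$.

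\emph{One-step moment estimate and the three regimes.} Because $L^{0}$ has an $\alpha $-stable-like small-jump structure yet jumps bounded by $1$, a direct computation using the decomposition of $L^{0}$ into jumps of size $\leq h_{n}$ and $>h_{n}$, with threshold $h_{n}\sim n^{-1/\alpha }$, yields
\begin{equation*}
\mathbf{E}\,|Y_{s}^{n}-Y_{\pi _{n}(s)}^{n}|^{q}\leq C\begin{cases} n^{-q/\alpha } & \text{if } 0<q<\alpha ,\\ n^{-1}\log n & \text{if } q=\alpha ,\\ n^{-1} & \text{if } q>\alpha .\end{cases}
\end{equation*}
Setting $q=p\beta $, the cutoffs $q<\alpha $, $q=\alpha $, $q>\alpha $ translate precisely to $p<\alpha /\beta $, $p=\alpha /\beta $, $p>\alpha /\beta $, producing the three regimes in the statement. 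The saturation at $n^{-1}$ for large $p$ reflects the Lipschitz contribution from $\lambda u$ and from $G$.

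\emph{Existence, uniqueness, and main difficulty.} The Cauchy property of $\{Y^{n}\}$ in $L^{p}(\varOmega ;C[0,1))$ follows by applying the same comparison to $Y^{n}$ versus $Y^{m}$; the limit $Y$ is then shown to satisfy (\ref{m1'}) by standard limit passage, and pathwise uniqueness follows by running the argument with $Y^{n}$ replaced by a second putative solution. I expect the main obstacle to be the sharp bound at the threshold $p=\alpha /\beta$: the $\log n$ factor requires the threshold decomposition of $L^{0}$ indicated above, calibrated so that the $L^{2}$-variance of the small jumps and the $L^{p}$-tails of the moderate jumps balance exactly. The PDE step in the setup is nontrivial but is directly inherited from the analytic machinery already used to establish Proposition~\ref{pro2}.
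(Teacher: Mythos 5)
Your proposal reaches the right conclusion and has the correct structural skeleton, but the analytic route is genuinely different from the paper's. You solve an \emph{elliptic} resolvent equation $\lambda u - \mathcal{L}u = b$ and make $\|\nabla u\|_\infty$ small by taking $\lambda$ large, then pass to the Zvonkin transform $\Phi(x)=x+u(x)$ and run a global Gronwall argument in transformed coordinates. The paper instead solves a \emph{parabolic} backward Kolmogorov equation $\partial_t v + \tilde b\cdot\nabla v + Lv = f$, $v(T,\cdot)=0$ (Corollary~\ref{lem5}), and uses the estimate $|v(t,\cdot)-v(s,\cdot)|_{\alpha/2+\mu}\le C(t-s)^{1/2}|f|_\mu$ to make $\nabla v$ small on a short window $[S,T]$; the Gronwall step is then local, with Lemma~\ref{gle} patching the windows together. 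Both devices serve the same purpose, but note that the analytic section of the paper (Proposition~\ref{p1}) establishes only the parabolic Schauder estimates, not the elliptic resolvent bounds you invoke; you would need to import those from Priola's work or rederive them, so the claim that they are ``developed in the analytic sections preceding this statement'' is a misattribution. Your diagnosis that the $\beta$-H\"older drift mismatch is the dominant residual, and that the three regimes arise from the one-step estimate $\mathbf{E}|Y^n_s - Y^n_{\pi_n(s)}|^{q}$ at $q=p\beta$ (Corollary~\ref{co2}), matches the paper's accounting. Two points to watch: the martingale mismatch corresponding to the paper's $B^{n,4}$ is not a plain BDG term — at $p=\alpha$ it needs the threshold estimate Lemma~\ref{ele11}(ii), which gives $[n/(\ln n)^2]^{-1}$ and is dominated by the drift residual only because $\beta<1$; and the $n^{-1}$ saturation for $p>\alpha/\beta$ comes from the one-step bound $\mathbf{E}|Y^n_s-Y^n_{\pi_n(s)}|^q\lesssim n^{-1}$ for $q>\alpha$ (a consequence of the truncated jumps having all moments), not from any ``Lipschitz contribution from $\lambda u$ and from $G$'' as you suggest.
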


In both statements above, $L$ and $G$ are nondegenerate (Assumption \textbf{S%
}$\left( c_{0}\right) $ holds). On the other hand, if $b$ and $G$ are
Lipshitz continuous, then there exists a unique solution to \eqref{m1} (see
Theorem 6.2.3, \cite{da}) with any bounded nonnegative $\rho $. In this
note, we use direct estimates of stochastic integrals to derive the
convergence rate in the Lipshitz, possibly completely degenerate, case.

The following statement holds for all Lipshitz case of (\ref{m1}).

\begin{proposition}
\label{pro3}Let $\alpha \in \lbrack 1,2),\rho $ be nonnegative bounded.
Assume $b$ and $G$ are bounded Lipshitz functions. Then

(i) For each $p\in \left( 0,\alpha \right) $, there is $C$ depending on $%
d,\alpha ,b,G,p,\rho $ such that%
\begin{eqnarray*}
\mathbf{E}\left[ \sup_{t}\left\vert X_{t}^{n}-X_{t}\right\vert ^{p}\right]
&\leq &C\left( n/\ln n\right) ^{-p/\alpha }\text{ if }0<p<\alpha \in \left(
1,2\right) , \\
\mathbf{E}\left[ \sup_{t}\left\vert X_{t}^{n}-X_{t}\right\vert ^{p}\right]
&\leq &C\left[ n/\left( \ln n\right) ^{2}\right] ^{-p}\text{ if }0<p<\alpha
=1.
\end{eqnarray*}

(ii) If $\alpha =1$, and $\rho \left( y\right) =\rho \left( -y\right) ,y\in 
\mathbf{R}^{d},$ then there is $C$ depending on $d,\alpha ,b,G,p,\rho $ such
that%
\begin{equation*}
\mathbf{E}\left[ \sup_{t}\left\vert X_{t}^{n}-X_{t}\right\vert ^{p}\right]
\leq C\left( n/\ln n\right) ^{-p}\text{ if }0<p<\alpha =1.
\end{equation*}
\end{proposition}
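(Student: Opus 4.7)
The approach is to estimate $\Delta_t=X_t-X_t^n$ directly by truncating the Levy noise at a level $K=K_n$ to be optimized. I would split $L_t=L_t^{K,-}+L_t^{K,+}$, with $L^{K,-}$ the compensated martingale integrating jumps of size $\leq K$ (jumps bounded by $K$, hence all moments, and $\int_{|y|\leq K}|y|^2\rho(y)|y|^{-d-\alpha}\,dy=O(K^{2-\alpha})$) and $L^{K,+}$ the large-jump part; the compensator drift $\gamma_K$ produced by the splitting satisfies $|\gamma_K|=O(K^{1-\alpha})$ for $\alpha\in(1,2)$, $|\gamma_K|=O(\ln K)$ for $\alpha=1$ with generic $\rho$, and $\gamma_K\equiv0$ under the symmetry assumption of part (ii). The error $\Delta_t$ then splits correspondingly into a drift error from $b$, a small-jump martingale $M_t^K=\int_0^t[G(X_{s-})-G(X_{\pi_n(s)}^n)]\,dL_s^{K,-}$, and a big-jump contribution $J_t^K$.

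For $M_t^K$ I would combine Doob's inequality with the It\^o $L^2$-isometry and then interpolate via $|G(x)-G(y)|^2\leq(2\|G\|_\infty)^{2-p}L_G^p|x-y|^p$, which exploits simultaneously the boundedness and Lipshitz property of $G$ to convert $L^2$-estimates into bounds involving only $L^p$-moments with $p<\alpha$, which exist uniformly in $n$. For $J_t^K$, Burkholder's inequality for $p\in(0,2]$ combined with the subadditivity $(\sum a_i^2)^{p/2}\leq\sum a_i^p$ yields
\begin{equation*}
\mathbf{E}\sup_{t\leq T}|J_t^K|^p\leq CK^{p-\alpha}\int_0^T\mathbf{E}|G(X_{s-})-G(X_{\pi_n(s)}^n)|^p\,ds+C|\gamma_K|^p,
\end{equation*}
with $K^{p-\alpha}\to0$ since $p<\alpha$. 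Combining with the a priori oscillation estimate $\mathbf{E}|X_s-X_{\pi_n(s)}|^p\leq Cn^{-p/\alpha}$ (obtained by applying the same splitting to $X$ itself), absorbing self-referential $\phi$-terms via Young's inequality (for instance $K^{(2-\alpha)p/2}\phi^{p/2}\leq\tfrac{p}{2}\phi+CK^{(2-\alpha)p/(2-p)}$), and applying Gronwall to the resulting linear inequality in $\phi(t)=\mathbf{E}\sup_{s\leq t}|\Delta_s|^p$, one arrives at an inequality of the form
\begin{equation*}
\phi_n\leq C\bigl(n^{-p/\alpha}+K^{(2-\alpha)p/2}n^{-p^2/(2\alpha)}+K^{(2-\alpha)p/(2-p)}+|\gamma_K|^p\bigr).
\end{equation*}

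Finally, optimizing $K=K_n$ balances the increasing and decreasing contributions, with a borderline interpolation at the exponent $p'\uparrow\alpha$ producing a single logarithmic factor. This yields $(n/\ln n)^{-p/\alpha}$ for $\alpha\in(1,2)$; for $\alpha=1$ with generic $\rho$ the drift correction $|\gamma_K|^p=O((\ln K)^p)$ contributes an additional logarithm and gives $(n/(\ln n)^2)^{-p}$, while the symmetry of part (ii) removes this correction and recovers the sharper $(n/\ln n)^{-p}$. The main obstacle is preventing a Gronwall-type exponential blowup $\exp(CK^{2-\alpha})$, which would arise from a naive $L^2$ analysis and destroy any rate once $K_n\to\infty$; this is resolved by the $L^2\to L^p$ interpolation that exploits the boundedness of $G$, keeping all constants uniform in $K$. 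A secondary technical point is the careful tracking of the logarithmic factors, which requires separating the symmetric and non-symmetric cases at $\alpha=1$.
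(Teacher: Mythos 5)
Your truncation-at-level-$K_n$ strategy is genuinely different from the paper's (which keeps the fixed cutoff at $|y|\le 1$ from the very definition of $L$, splits $\bar X^n_t=X^n_t-X_t$ into six pieces, and estimates each compensated-jump martingale piece via Lenglart's domination lemma), but I believe your version has a gap that is not cosmetic.

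The crux is the self-referential small-jump martingale. Passing through the It\^o $L^2$-isometry, then interpolating $|G(x)-G(y)|^2\le(2\|G\|_\infty)^{2-p}L_G^p|x-y|^p$, then taking the $p/2$-power gives something of the form $K^{(2-\alpha)p/2}\bigl(\int_0^T\phi\bigr)^{p/2}$, i.e.\ a \emph{sublinear} function of $\phi$. Your Young step $K^{(2-\alpha)p/2}\phi^{p/2}\le\tfrac{p}{2}\phi+CK^{(2-\alpha)p/(2-p)}$ then trades that sublinearity for a free-standing additive term $K^{(2-\alpha)p/(2-p)}$ which carries no $n$-decay whatsoever. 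In the very formula you write,
\begin{equation*}
\phi_n\leq C\bigl(n^{-p/\alpha}+K^{(2-\alpha)p/2}n^{-p^2/(2\alpha)}+K^{(2-\alpha)p/(2-p)}+|\gamma_K|^p\bigr),
\end{equation*}
the exponent $(2-\alpha)p/(2-p)$ is strictly positive, so this term grows with $K$, while $|\gamma_K|^p\sim K^{(1-\alpha)p}$ (for $\alpha\in(1,2)$) decays in $K$; the only way to make both small simultaneously is $K=1$, where each equals a constant of order one. Hence no choice of $K_n$ can make the right-hand side decay, let alone reproduce $(n/\ln n)^{-p/\alpha}$. Your closing remark that the $L^2\to L^p$ interpolation keeps "all constants uniform in $K$" is precisely where the argument breaks: the interpolation replaces the Gronwall blow-up $e^{CK^{2-\alpha}}$ by the polynomial $K^{(2-\alpha)p/(2-p)}$, but that is still unbounded in $K$.

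The paper sidesteps this entirely by not using the It\^o isometry on the self-referential martingale. Instead, for $p<2$, Lenglart's domination inequality (see Remark~\ref{re1} and Lemma~\ref{lem3}) yields directly
\begin{equation*}
\mathbf{E}\Bigl[\sup_{S\le t\le T}\Bigl|\int_S^t\!\!\int_{|y|\le1}[G(X^n_{r-})-G(X_{r-})]\,y\,q(dr,dy)\Bigr|^p\Bigr]\le C\,\mathbf{E}\Bigl[\Bigl(\int_S^T|\bar X^n_r|^2\,dr\Bigr)^{p/2}\Bigr]\le C(T-S)^{p/2}\,\mathbf{E}\Bigl[\sup_{S\le t\le T}|\bar X^n_t|^p\Bigr],
\end{equation*}
which is \emph{linear} in $\phi$ and carries a constant independent of any truncation level; the Gronwall/gluing step (Lemma~\ref{gle}) then just requires $T-S$ small, fixed, with no reference to $n$ or $K$. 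The discretization-error pieces (your oscillation terms) are handled separately by the sharp $\alpha$-critical moment estimates (Lemma~\ref{ele11}(ii), Lemma~\ref{ele12}), where the logarithm enters through $\varepsilon$-truncation inside a single stochastic integral, with $\varepsilon$ chosen scale-freely as $(\mathbf{E}\int\bar F_r^\alpha dr)^{1/\alpha}$, not as an outer cutoff of the noise. If you want to salvage your route: do not re-truncate the approximation-error martingale at $K$ at all (keep it at $1$), and replace the isometry-plus-Jensen step by Lenglart's domination so that the self-referential term stays linear; the $K$-truncation is then superfluous, since the estimate of the discretization pieces already contains the correct logarithmic behaviour through the critical exponent $\alpha$.
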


We derive the following rate of convergence in all Lipshitz case for (\ref%
{m1'}).

\begin{proposition}
\label{pro4}Let $\alpha \in \lbrack 1,2),\rho $ be nonnegative bounded.
Assume $b$ and $G$ are bounded Lipshitz functions. Then

(i) For each $p\in \left( 0,\alpha \right) $, there is $C$ depending on $%
d,\alpha ,b,G,p,\rho $ such that%
\begin{equation*}
\mathbf{E}\left[ \sup_{t}\left\vert Y_{t}^{n}-Y_{t}\right\vert ^{p}\right]
\leq C\left\{ 
\begin{array}{ll}
\left( n/\ln n\right) ^{-p/\alpha } & \text{if }0<p<\alpha \in \left(
1,2\right) , \\ 
\left[ n/\left( \ln n\right) ^{2}\right] ^{-p} & \text{if }0<p<\alpha =1, \\ 
\left[ n/\left( \ln n\right) ^{2}\right] ^{-1} & \text{if }p=\alpha , \\ 
n^{-1} & \text{if }p>\alpha%
\end{array}%
\right.
\end{equation*}

(ii)\ If $\alpha =1$, and $\rho \left( y\right) =\rho \left( -y\right) ,y\in 
\mathbf{R}^{d},$ then there is $C$ depending on $d,\alpha ,b,G,p,\rho $ such
that%
\begin{equation*}
\mathbf{E}\left[ \sup_{t}\left\vert Y_{t}^{n}-Y_{t}\right\vert ^{p}\right]
\leq C\left( n/\ln n\right) ^{-p}\text{ if }0<p<\alpha =1.
\end{equation*}
\end{proposition}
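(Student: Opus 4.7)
The plan is to adapt the direct stochastic integral estimation scheme of Proposition~\ref{pro3} to the truncated equation~(\ref{m1'}), exploiting the crucial new feature that $L^{0}$ has bounded jumps (so all moments are finite) and satisfies $\int _{\left\vert y\right\vert \leq 1}\left\vert y\right\vert ^{q}\left\vert y\right\vert ^{-d-\alpha }\rho (y)\,dy<\infty $ for every $q>\alpha $. Existence and uniqueness of the strong solution $Y$ under Lipshitz coefficients is classical (Theorem 6.2.3 of \cite{da}), so only the rate of convergence is at issue. Starting from the error equation
\begin{equation*}
Y_{t}^{n}-Y_{t}=\int _{0}^{t}\left[ b(Y_{\pi _{n}(s)}^{n})-b(Y_{s})\right] \,ds+\int _{0}^{t}\left[ G(Y_{\pi _{n}(s)}^{n})-G(Y_{s-})\right] \,dL_{s}^{0},
\end{equation*}
splitting each coefficient difference into $b(Y_{\pi _{n}(s)}^{n})-b(Y_{\pi _{n}(s)})$ plus $b(Y_{\pi _{n}(s)})-b(Y_{s})$ (and likewise for $G$), and applying the Lipshitz bounds together with a Gronwall-type inequality, one reduces the problem to moment estimates for the one-slab increment $L_{s}^{0}-L_{\pi _{n}(s)}^{0}$ of length $1/n$.

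For $p\in (0,\alpha )$ the truncation of jumps of modulus $>1$ does not alter the small-jump regime responsible for the rate, so the dyadic $\alpha $-stable scaling argument of Proposition~\ref{pro3} transfers verbatim and delivers $(n/\ln n)^{-p/\alpha }$ for $\alpha \in (1,2)$ and $[n/(\ln n)^{2}]^{-p}$ for $\alpha =1$; the improvement under $\rho (-y)=\rho (y)$ in case~(ii) follows by the same local symmetry cancellation. For $p>\alpha $ one invokes a purely discontinuous BDG-type bound --- the $p$-variation estimate
\begin{equation*}
\mathbf{E}\sup _{s\leq t}\left\vert M_{s}\right\vert ^{p}\leq C_{p}\,\mathbf{E}\sum _{s\leq t}\left\vert \Delta M_{s}\right\vert ^{p}\qquad (1\leq p\leq 2),
\end{equation*}
obtained from BDG and the subadditivity of $x\mapsto x^{p/2}$, together with Kunita's inequality when $p\geq 2$. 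Applied to $M=L^{0}$ on one slab these yield $\mathbf{E}\left\vert L_{1/n}^{0}\right\vert ^{p}\leq C(1/n)\int _{\left\vert y\right\vert \leq 1}\left\vert y\right\vert ^{p}\left\vert y\right\vert ^{-d-\alpha }\rho (y)\,dy=O(1/n)$ whenever $p>\alpha $; Gronwall then propagates the bound to $\mathbf{E}\sup _{t}\left\vert Y_{t}^{n}-Y_{t}\right\vert ^{p}\leq Cn^{-1}$. At the borderline $p=\alpha $ the same integral diverges logarithmically, and the $[n/(\ln n)^{2}]^{-1}$ rate is recovered by splitting $L^{0}$ at a critical jump size $h$, handling $\{\left\vert y\right\vert <h\}$ in $L^{2}$ and $\{\left\vert y\right\vert \geq h\}$ in $L^{p}$, and optimizing in $h$.

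The main obstacle is the intermediate range $p\in (\alpha ,2)$: a naive $\mathbf{E}\left\vert M\right\vert ^{p}\leq (\mathbf{E}\left\vert M\right\vert ^{2})^{p/2}$ applied to a one-slab increment delivers only $n^{-p/2}$, which is strictly worse than $n^{-1}$. The sharpening to $n^{-1}$ rests on the subadditivity-based BDG bound above, whose implicit constant behaves like $(p-\alpha )^{-1}$ and must be traded against the logarithmic blow-up at $p=\alpha $ when pasting together the three rate regions. Uniform tracking of these constants in $p$, together with a careful reprise of the dyadic decomposition of Proposition~\ref{pro3} to recover the exact powers of $\ln n$ in the small-$p$ regime, is the technical core that welds the four cases of the statement into a single Gronwall estimate.
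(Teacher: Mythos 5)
Your proposal follows the same route as the paper's proof: decompose the error $\bar Y^n_t = Y^n_t - Y_t$ into a drift piece (split $b(Y^n_{\pi_n})-b(Y)$ into a one-slab part and an error part) plus a stochastic-integral piece (split $G$ similarly), estimate each piece, and close a local estimate into a global one via a discrete Gronwall step (Lemma \ref{gle}). For $p<\alpha$ the reuse of the Proposition \ref{pro3} estimates is exactly what the paper does; for $p>\alpha$ your ``subadditivity-based BDG'' bound for $p\in[1,2)$ together with Kunita's inequality for $p\ge 2$ is precisely Lemma \ref{ele1}; and for $p=\alpha$ your split at a critical jump size $h$ with an $L^2$ treatment of the small jumps and an $L^p$ treatment of the large ones, then optimizing in $h$, is exactly Lemma \ref{ele11}(ii). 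So the structure is sound and matches the paper.

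The one place you go off track is the closing paragraph. You treat the blow-up of the constant $C_p$ as $p\downarrow\alpha$ as an obstacle that ``must be traded against'' the logarithm at $p=\alpha$ and ``welded into a single Gronwall estimate'' uniformly in $p$. That is not needed and not what is claimed: Proposition \ref{pro4} fixes $p$ and allows $C$ to depend on $p$, so for each $p>\alpha$ the constant $\int_{|y|\le 1}|y|^{p-d-\alpha}\,dy\sim (p-\alpha)^{-1}$ is simply a finite number, the borderline $p=\alpha$ is treated by a separate lemma (Lemma \ref{ele11}(ii)) yielding the extra $\ln$, and the three regimes of $p$ are independent statements — there is no pasting across $p$ values. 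This misreading adds a spurious ``technical core'' to an argument that, once the lemmas are in place, closes case by case without any uniformity-in-$p$ bookkeeping. With that paragraph corrected (or dropped), your sketch is a faithful outline of the paper's proof.
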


The rates above are in agreement with the subtle results obtained in \cite{j}
for (\ref{m1}) in the case $d=1,b=0,G\in C^{3}$.

An obvious consequence of Proposition \ref{pro3} is

\begin{corollary}
\label{cl1}Let $\alpha \in \lbrack 1,2),\rho $ be nonnegative bounded.
Assume $b$ and $G$ are bounded Lipshitz functions. Then

(i) there is $C$ depending on $d,\alpha ,b,G,p,\rho $ such that for each $%
\varphi \in C^{\beta }\left( \mathbf{R}^{d}\right) ,t\in \left[ 0,1\right] ,$%
\begin{eqnarray*}
\left\vert \mathbf{E}\varphi \left( X_{t}\right) -\mathbf{E}\varphi \left(
X_{t}^{n}\right) \right\vert &\leq &C\left\vert \varphi \right\vert _{\beta
}\left( n/\ln n\right) ^{-\beta /\alpha }\text{ if }\alpha \in \left(
1,2\right) , \\
\left\vert \mathbf{E}\varphi \left( X_{t}\right) -\mathbf{E}\varphi \left(
X_{t}^{n}\right) \right\vert &\leq &C\left\vert \varphi \right\vert _{\beta }%
\left[ n/(\ln n)^{2}\right] ^{-\beta }\text{ if }\alpha =1.
\end{eqnarray*}

(ii)\ If $\alpha =1$, and $\rho \left( y\right) =\rho \left( -y\right) ,y\in 
\mathbf{R}^{d},$ then there is $C$ depending on $d,\alpha ,b,G,p,\rho $ such
that for each $\varphi \in C^{\beta }\left( \mathbf{R}^{d}\right) ,t\in %
\left[ 0,1\right] ,$%
\begin{equation*}
\left\vert \mathbf{E}\varphi \left( X_{t}\right) -\mathbf{E}\varphi \left(
X_{t}^{n}\right) \right\vert \leq C\left\vert \varphi \right\vert _{\beta
}\left( n/\ln n\right) ^{-\beta }.
\end{equation*}
\end{corollary}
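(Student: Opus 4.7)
The corollary is essentially an immediate consequence of Proposition \ref{pro3} combined with the $\beta$-Hölder continuity of the test function $\varphi$. The plan is to reduce the weak-error estimate to the strong $L^\beta$-error via the Hölder inequality for $\varphi$, then plug in the bounds of Proposition \ref{pro3} with the choice $p=\beta$.

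First I would observe that, since $\varphi\in C^{\beta}(\mathbf{R}^{d})$ with $\beta\in(0,1]$,
\begin{equation*}
\left\vert \mathbf{E}\varphi(X_{t})-\mathbf{E}\varphi(X_{t}^{n})\right\vert
\leq \mathbf{E}\left\vert \varphi(X_{t})-\varphi(X_{t}^{n})\right\vert
\leq \left\vert \varphi\right\vert_{\beta}\,\mathbf{E}\left\vert X_{t}-X_{t}^{n}\right\vert^{\beta}
\leq \left\vert \varphi\right\vert_{\beta}\,\mathbf{E}\!\left[\sup_{s\leq 1}\left\vert X_{s}^{n}-X_{s}\right\vert^{\beta}\right].
\end{equation*}
Hence it suffices to bound the right-hand side by $\left\vert \varphi\right\vert_{\beta}$ times each of the declared rates. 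Next I would verify that the exponent $p=\beta$ lies in the admissible range of Proposition \ref{pro3}. In case (i) with $\alpha\in(1,2)$ one has $\beta\leq 1<\alpha$, so Proposition \ref{pro3}(i) directly gives $\mathbf{E}[\sup_{s}|X_{s}^{n}-X_{s}|^{\beta}]\leq C(n/\ln n)^{-\beta/\alpha}$. In the subcase $\alpha=1$ of part (i) one needs $\beta<1$, which is the content of the Hölder assumption (for $\beta=1$ the same bound follows by Jensen with any $p\in(\beta,1)$), and Proposition \ref{pro3}(i) yields $\mathbf{E}[\sup_{s}|X_{s}^{n}-X_{s}|^{\beta}]\leq C[n/(\ln n)^{2}]^{-\beta}$. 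For the symmetric case $\alpha=1$ in part (ii), Proposition \ref{pro3}(ii) gives $\mathbf{E}[\sup_{s}|X_{s}^{n}-X_{s}|^{\beta}]\leq C(n/\ln n)^{-\beta}$.

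Combining these with the preceding Hölder estimate produces the three claimed inequalities. Since the whole argument is a direct consequence of a prior proposition, there is no real obstacle; the only minor technicality is handling the boundary case $\beta=1$ (if one allows it) by passing through an intermediate exponent $p\in(\beta,\alpha)$ via Jensen's inequality, but the stated Hölder hypothesis $\varphi\in C^{\beta}$ with $\beta$ strictly less than $\alpha$ in each regime makes even this unnecessary.
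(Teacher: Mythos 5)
Your proof is correct and is exactly the argument the paper has in mind: the paper explicitly introduces Corollary \ref{cl1} as ``An obvious consequence of Proposition \ref{pro3},'' and your two lines (Hölder continuity of $\varphi$ followed by Proposition \ref{pro3} with $p=\beta$) supply precisely the omitted derivation. One tiny caveat: your parenthetical remark about the boundary case $\beta=1$ is garbled, since the interval $(\beta,1)$ is empty when $\beta=1$ and Jensen runs the wrong way for that purpose, but this is a corner case ($\beta=\alpha=1$) that the paper does not treat either and it does not affect the main argument.
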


Our note is organized as follows. In section 2, notation is introduced,
primary analytic tools are discussed and some auxiliary results are
presented. In section 3, we prove Propositions \ref{pro2}-\ref{pro4}.

\section{Notation and Auxiliary Results}

\subsection{Notation}

$\mathbf{R}_{0}^{d}:=\mathbf{R}^{d}\backslash \{0\}$. Denote $H_{T}=\left[
0,T\right] \times \mathbf{R}^{d},0\leq T\leq 1$. $I_{d}$ is the $d\times d$%
-identity matrix.

For any $x,y\in\mathbb{R}^d$, we write 
\begin{equation*}
\left(x,y\right)=\sum^d_{i=1}x_iy_i, \quad \left\vert
x\right\vert=\left(x,x\right)^{1/2}.
\end{equation*}

For a function $u=u\left( t,x\right) $ on $H$, we denote its partial
derivatives by $\partial _{t}u=\partial u/\partial t$, $\partial
_{i}u=\partial u/\partial x_{i}$, $\partial _{ij}^{2}u=\partial
^{2}u/\partial x_{i}x_{j}$, and denote its gradient with respect to $x$ by $%
\nabla u=\left( \partial _{1}u,\ldots ,\partial _{d}u\right) $ and $%
D^{|\gamma |}u=\partial ^{|\gamma |}u/\partial x_{1}^{\gamma _{1}}\ldots
\partial x_{d}^{\gamma _{d}}$, where $\gamma =\left( \gamma _{1},\ldots
,\gamma _{d}\right) \in \mathbf{N}^{d}$ is a multi-index. Meanwhile, we
write 
\begin{eqnarray*}
\left\vert u\right\vert _{0} &=&\sup_{t,x}\left\vert u\left( t,x\right)
\right\vert , \\
\left[ u\right] _{\beta } &=&\sup_{t,x,h\neq 0}\frac{\left\vert u\left(
t,x+h\right) -u\left( t,x\right) \right\vert }{\left\vert h\right\vert
^{\beta }}\quad \text{if}\quad \beta \in \left( 0,1\right) , \\
\left[ u\right] _{\beta } &=&\sup_{t,x,h\neq 0}\frac{\left\vert u\left(
t,x+h\right) -u\left( t,x\right) \right\vert }{\left\vert h\right\vert }%
\quad \text{if}\quad \beta =1.
\end{eqnarray*}

For $\beta =\left[ \beta \right] +\{\beta \}>0$, where $\left[ \beta \right]
\in \mathbf{N}$ is the greatest integer that is less than or equal to $\beta 
$ and $\{\beta \}\in \left( 0,1\right) $, $C^{\beta }\left( H_{T}\right) $
denotes the space of measurable functions $u$ on $H_{T}$ such that the norm 
\begin{equation*}
\left\vert u\right\vert _{\beta }=\sum_{\left\vert \gamma \right\vert \leq 
\left[ \beta \right] }\left\vert D^{\gamma }u\right\vert
_{0}+\sum_{\left\vert \gamma \right\vert =\left[ \beta \right] }\left[
D^{\gamma }u\right] _{\{\beta \}}<\infty .
\end{equation*}%
Analogous definitions apply to functions on $\mathbf{R}^{d}$, and $C^{\beta
}\left( \mathbf{R}^{d}\right) $ denotes the corresponding function space.

For a $d\times d$ matrix $G\left( x\right) $ on $\mathbf{R}^{d}$, we define
its norm to be the operator norm, i.e., 
\begin{equation*}
\left\vert G\left( x\right) \right\vert :=\sup_{y\in \mathbf{R}%
^{d},\left\vert y\right\vert =1}\left\vert G\left( x\right) y\right\vert ,
\end{equation*}%
and 
\begin{equation*}
\Vert G\Vert :=\sup_{x\in \mathbf{R}^{d}}\left\vert G\left( x\right)
\right\vert .
\end{equation*}%
In our note, $\Vert G\Vert $ is assumed to be finite and that implies each
entry $\left\vert G_{ij}\right\vert _{0}\leq \Vert G\Vert $.

Because Lipshitz continuity implies differentiability almost everywhere, we
write $\left\vert \nabla G\right\vert _{\infty }$ to denote the Lipshitz
constant of $G$, even if $G$ is not specified to be differentiable.

At last, $C=C\left( \cdot ,\ldots ,\cdot \right) $ denotes constants
depending only on quantities appearing in parentheses, but it may represent
different values in different contexts.

\subsection{Auxiliary Results}

\subsubsection{Backward Kolmogorov equations in H\"{o}lder classes}

We will rely on some results about backward Kolmogorov equations. For
convenience, we summarize assumptions that will be needed as follows:

\textbf{A}$(K,c_{0})$. (i) \textbf{S}$\left( c_{0}\right) $ holds and for
the same $c_{0}$, 
\begin{equation*}
\left\vert \det G\left( x\right) \right\vert \geq c_{0},x\in \mathbf{R}^{d};
\end{equation*}%
(ii) There is a constant $K$ such that 
\begin{equation*}
\left\vert \left\vert G\right\vert \right\vert +\left\vert \nabla
G\right\vert _{\infty }\leq K,\quad 0\leq \rho \left( y\right) \leq K,y\in 
\mathbf{R}^{d}.
\end{equation*}

Define for $v\in C_{0}^{\infty }\left( \mathbf{R}^{d}\right), x\in \mathbf{R}%
^{d}$, 
\begin{equation}
Lv\left( x\right) =\int_{\left\vert y\right\vert \leq 1}\left[ v\left(
x+G\left( x\right) y\right) -v\left( x\right) -\left( \nabla v\left(
x\right) \cdot G\left( x\right) y\right) \right] \rho \left( y\right) \frac{%
dy}{|y|^{d+\alpha }}.  \label{e0}
\end{equation}

\begin{proposition}
\label{p1}Let $\alpha \in \lbrack 1,2)$, $\mu \in (0,1)$, $\tilde{b}=\left( 
\tilde{b}^{k}\right) _{1\leq k\leq d}$ with $\tilde{b}^{k}\in C^{\mu }\left( 
\mathbf{R}^{d}\right) $, $\left\vert \tilde{b}^{k}\right\vert _{\mu }\leq K$ 
$\forall k$, and Assumption \textbf{A}$(K,c_{0})$\textbf{\ }hold. Let 
\begin{equation*}
\left\vert \rho \left( y\right) -\rho \left( z\right) \right\vert \leq
K\left\vert y-z\right\vert ^{\beta }\text{ for all }\left\vert y\right\vert
=\left\vert z\right\vert =1.
\end{equation*}

Then for any $f\in C^{\mu }\left( H_{1}\right) $, there exists a unique
solution $u\in C^{\alpha +\mu }\left( H_{1}\right) $ to the parabolic
equation 
\begin{eqnarray}
\qquad \partial _{t}u\left( t,x\right) &=&Lu\left( t,x\right) +\tilde{b}%
\left( x\right) \cdot \nabla u\left( t,x\right) +f\left( t,x\right) ,\quad
\left( t,x\right) \in H_{1},  \label{e1} \\
u\left( 0,x\right) &=&0,\quad x\in \mathbf{R}^{d}.  \notag
\end{eqnarray}

Moreover, there is a constant $C=C\left( \alpha ,\mu ,d,K,c_{0}\right) $
such that 
\begin{equation*}
\left\vert u\right\vert _{\alpha +\mu }\leq C\left\vert f\right\vert _{\mu },
\end{equation*}%
and for all $s\leq t\leq 1$, 
\begin{equation*}
\left\vert u\left( t,\cdot \right) -u\left( s,\cdot \right) \right\vert _{%
\frac{\alpha }{2}+\mu }\leq C\left( t-s\right) ^{1/2}\left\vert f\right\vert
_{\mu }.
\end{equation*}
\end{proposition}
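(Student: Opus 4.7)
The plan is to prove the Schauder estimate in three stages: first for the constant-coefficient model operator, then upgrade to variable $G$ by freezing coefficients, and finally deduce existence by the method of continuity. Time regularity will follow from interpolation.

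Step 1 (constant coefficients). Fix an invertible matrix $G_{0}$ and consider the model operator $L_{G_{0}}$ obtained by replacing $G(x)$ with $G_{0}$ in \eqref{e0}. Its Fourier symbol is
\begin{equation*}
\psi_{G_{0}}(\xi) = \int_{|y|\le 1}\bigl[e^{i(\xi, G_{0} y)}-1-i(\xi, G_{0} y)\bigr]\rho(y)\,\frac{dy}{|y|^{d+\alpha}},
\end{equation*}
and under $\mathbf{S}(c_{0})$ together with $|\det G_{0}|\ge c_{0}$ one checks the two-sided comparison $\operatorname{Re}(-\psi_{G_{0}}(\xi))\asymp |\xi|^{\alpha}$, with Hölder-$\beta$ control of the angular part of the symbol inherited from the Hölder assumption on $\rho$. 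Using a Littlewood--Paley decomposition and Bernstein-type bounds for the multiplier $e^{t\psi_{G_{0}}(\xi)}$ (equivalently, heat-kernel estimates for the associated semigroup $T_{t}$), I would prove the model Schauder estimate
\begin{equation*}
|u|_{\alpha+\mu} + |\partial_{t}u|_{\mu} \le C\,|f|_{\mu}
\end{equation*}
for the Cauchy problem $\partial_{t}u=L_{G_{0}}u+f$, $u(0,\cdot)=0$, with a constant $C$ depending only on $\alpha,\mu,d,K,c_{0}$ (i.e.\ uniform in the freezing point $x_{0}$).

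Step 2 (freezing of coefficients). Fix a smooth partition of unity $\{\eta_{k}\}$ with supports of diameter $\delta$ and points $x_{k}\in\operatorname{supp}\eta_{k}$, and write
\begin{equation*}
\partial_{t}u = L_{G(x_{k})}u + \bigl(L-L_{G(x_{k})}\bigr)u + \tilde{b}\cdot \nabla u + f \qquad\text{on }\operatorname{supp}\eta_{k}.
\end{equation*}
By Taylor expansion in $G(x)-G(x_{k})$ inside the jump, the commutator term has integrand comparable to $|G(x)-G(x_{k})|\cdot|y|^{2}|\nabla^{2}u|$ near $y=0$, so on $\operatorname{supp}\eta_{k}$ it is bounded in $C^{\mu}$ by $CK\delta\,|u|_{\alpha+\mu}$; for $\delta$ small this is absorbed into the left-hand side of the model estimate from Step~1. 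The drift $\tilde{b}\cdot\nabla u$ is genuinely lower order when $\alpha>1$, and when $\alpha=1$ it is handled by the interpolation $|\nabla u|_{\mu}\le \varepsilon\,|u|_{\alpha+\mu}+C_{\varepsilon}|u|_{0}$ combined with $|\tilde{b}|_{\mu}\le K$. Patching the localized estimates through $\eta_{k}$ yields the global a priori bound $|u|_{\alpha+\mu}+|\partial_{t}u|_{\mu}\le C|f|_{\mu}$.

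Step 3 (existence, uniqueness, time regularity). Uniqueness is immediate from the a priori bound applied to the difference of two solutions. For existence I would run the method of continuity along the family $L_{\lambda}=(1-\lambda)(-(-\Delta)^{\alpha/2})+\lambda L$, $\lambda\in[0,1]$: the model case $\lambda=0$ is classically solvable in $C^{\alpha+\mu}(H_{1})$, and the uniform a priori bound from Step~2 (which is easily seen to be uniform in $\lambda$ since all hypotheses persist) propagates well-posedness to $\lambda=1$. Finally, the time regularity follows by interpolation: $|\partial_{t}u|_{\mu}\le C|f|_{\mu}$ gives $|u(t,\cdot)-u(s,\cdot)|_{\mu}\le C(t-s)|f|_{\mu}$, while $|u(t,\cdot)-u(s,\cdot)|_{\alpha+\mu}\le 2|u|_{\alpha+\mu}\le C|f|_{\mu}$, and the standard Hölder interpolation
\begin{equation*}
|v|_{\mu+\theta\alpha}\le C\,|v|_{\mu}^{1-\theta}\,|v|_{\mu+\alpha}^{\theta}
\end{equation*}
with $\theta=1/2$ produces the claimed $(t-s)^{1/2}$ bound in $C^{\alpha/2+\mu}$.

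The main obstacle is Step~1, specifically obtaining sharp Hölder Schauder estimates for a non-local constant-coefficient operator whose symbol has only Hölder (not smooth) dependence on the angular variable; the Littlewood--Paley calculation must be carried out with enough care to extract the gain of $\alpha$ derivatives from $f\in C^{\mu}$ to $u\in C^{\alpha+\mu}$. The freezing argument in Step~2 is subtle too, because the perturbation $(L-L_{G(x_{k})})u$ is non-local and a priori sees $u$ far from $\operatorname{supp}\eta_{k}$; I would split the defining integral at $|y|\le r$ vs.\ $|y|>r$ and balance $r$ against the localization scale $\delta$, exploiting that the large-jump piece involves only $|u|_{0}$ and that the small-jump piece contributes the desired small factor via Lipschitz continuity of $G$.
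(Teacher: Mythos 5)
Your plan is a genuinely different route from the paper's. The paper does not reprove the Schauder estimate: it rewrites $Lu$ by the change of variables $z=G(x)y$ as a stable-like operator with a Hölder kernel $m(x,z)$, checks that $m$ is bounded above and below, Hölder in $x$ uniformly in $z$, and odd-symmetric when $\alpha=1$, verifies the remaining structural hypotheses (on bounded jumps and their Lipschitz dependence, with a logarithmic allowance at $\alpha=1$), and then simply invokes Theorem 4 of \cite{mp1}. Your three-step scheme (constant-coefficient model estimate, freezing of coefficients, continuity method plus interpolation) is essentially an attempt to reprove the content of that cited theorem. That is a perfectly legitimate thing to want to do, and Steps 1 and 3 as well as the general shape of Step 2 are sound; the time-regularity argument in Step 3 in particular is clean and correct.

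There is, however, a real gap in Step 2 at $\alpha=1$. You claim the drift is absorbed by the interpolation
\begin{equation*}
\left\vert \nabla u\right\vert _{\mu }\leq \varepsilon \left\vert
u\right\vert _{\alpha +\mu }+C_{\varepsilon }\left\vert u\right\vert _{0}.
\end{equation*}
For $\alpha =1$ this inequality is false: $\left\vert \nabla u\right\vert
_{\mu }$ and $\left[ u\right] _{1+\mu }$ have the same scaling, so replacing $u$ by $u(\lambda \cdot )$, dividing by $\lambda ^{1+\mu }$ and letting $\lambda \rightarrow \infty $ reduces the claimed bound to $\left[ \nabla u\right] _{\mu }\leq \varepsilon \left[ \nabla u\right] _{\mu }$, which is absurd for $\varepsilon <1$. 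When $\alpha =1$ the drift is a \emph{critical}-order perturbation of the nonlocal operator, not a lower-order one, and cannot be absorbed by any $\varepsilon $-smallness coming from pure spatial interpolation. You would need a different mechanism — for instance, incorporating $\tilde{b}$ into the frozen model operator (so that Step 1 already handles a constant drift of critical order), or exploiting the evolution structure by first proving the a priori bound on a short time interval $[0,T]$ where the contribution of the drift carries a small factor $T^{\mu /2}$, then iterating in $T$. This is precisely the point at which the paper's reference \cite{mp1} does the heavy lifting, including the logarithmic correction visible in the paper's verification of Assumption B2.

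Two lesser points. In the freezing estimate you write the perturbation integrand as $\left\vert G(x)-G(x_{k})\right\vert \cdot \left\vert y\right\vert ^{2}\left\vert \nabla ^{2}u\right\vert $, but $u$ is only in $C^{\alpha +\mu }$ and $\alpha +\mu <2$ is allowed (it always is when $\alpha =1$), so $\nabla ^{2}u$ need not exist; the correct bound is obtained from a Hölder increment of $\nabla u$ and yields $\delta \left\vert y\right\vert ^{\alpha +\mu }\left\vert u\right\vert _{\alpha +\mu }$, which integrates to $\delta r^{\mu }\left\vert u\right\vert _{\alpha +\mu }$ over $\left\vert y\right\vert \leq r$, as you need. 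And in Step 1 the symbol of $L_{G_{0}}$ with the truncation $\left\vert y\right\vert \leq 1$ is only comparable to $\left\vert \xi \right\vert ^{\alpha }$ for $\left\vert \xi \right\vert \gtrsim 1$ (it behaves like $\left\vert \xi \right\vert ^{2}$ near the origin); this is harmless for the Schauder bound but should be stated, since your two-sided comparison as written is not literally true for all $\xi $.
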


\begin{proof}
We apply \text{Theorem} 4 in \cite{mp1} with $\mathcal{L}=A+B,$ where 
\begin{eqnarray*}
&&Au\left( t,x\right) =\int [u\left( t,x+G\left( x\right) y\right) -u\left(
t,x\right) \\
&&\qquad \qquad \qquad -\left( \nabla u\left( t,x\right) \cdot G\left(
x\right) y\right) \chi _{\alpha }\left( y\right) ]\rho \left( y\right) \frac{%
dy}{|y|^{d+\alpha }}, \\
&&Bu\left( t,x\right) =\bar{b}\left( x\right) \cdot \nabla u\left( t,x\right)
\\
&&\qquad \qquad \qquad -\int_{\left\vert y\right\vert >1}\left[ u\left(
t,x+G\left( x\right) y\right) -u\left( t,x\right) \right] \rho \left(
y\right) \frac{dy}{|y|^{d+\alpha }},\left( t,x\right) \in H_{1},
\end{eqnarray*}
$\chi _{\alpha }\left( y\right) =1$ if $\alpha \in \left( 1,2\right) ,\chi
_{\alpha }\left( y\right) =\chi _{\left\{ \left\vert y\right\vert \leq
1\right\} }\left( y\right) $ if $\alpha =1$, and%
\begin{equation*}
\bar{b}\left( x\right) =\tilde{b}\left( x\right) +1_{\alpha \in \left(
1,2\right) }G\left( x\right) \int_{\left\vert y\right\vert >1}y\rho \left(
y\right) \frac{dy}{\left\vert y\right\vert ^{d+\alpha }},x\in \mathbf{R}^{d}.
\end{equation*}
Using the symmetry assumption on $\rho $ and changing variables of
integration, we see that%
\begin{equation*}
Au\left( t,x\right) =\int \left[ u\left( t,x+y\right) -u\left( t,x\right)
-\left( \nabla u\left( t,x\right) \cdot y\right) \chi _{\alpha }\left(
y\right) \right] m\left( x,y\right) \frac{dy}{|y|^{d+\alpha }},
\end{equation*}%
where for $x\in \mathbf{R}^{d},y\in \mathbf{R}_{0}^{d},$ 
\begin{equation*}
m\left( x,y\right) =\frac{\rho \left( G^{-1}\left( x\right) y\right) }{%
\left\vert \det G\left( x\right) \right\vert \left\vert G^{-1}\left(
x\right) \frac{y}{\left\vert y\right\vert }\right\vert ^{d+\alpha }}:=%
\widetilde{m}\left( x,y\right) \rho \left( G^{-1}\left( x\right) y\right) .
\end{equation*}%
First we verify assumptions of Theorem 4 in \cite{mp1} for $m\left(
x,y\right) $. Obviously, 
\begin{equation*}
\left\vert G\left( x\right) y\right\vert \leq K\left\vert y\right\vert ,%
\text{ }x,y\in \mathbf{R}^{d},
\end{equation*}%
which implies $\left\vert y\right\vert \leq K\left\vert G\left( x\right)
^{-1}y\right\vert $ and thus $\left\vert G^{-1}\left( x\right) \frac{y}{%
\left\vert y\right\vert }\right\vert \geq 1/K,x\in \mathbf{R}^{d},y\in 
\mathbf{R}_{0}^{d}$. Therefore, 
\begin{equation*}
\left\vert m\left( x,y\right) \right\vert \leq \frac{K^{d+\alpha +1}}{c_{0}},%
\text{ }x\in \mathbf{R}^{d},y\in \mathbf{R}_{0}^{d}.
\end{equation*}%
On the other hand, it's obvious that $\det G\left( x\right) $ is bounded and
Lipshitz with $c_{0}\leq \left\vert \det G\left( x\right) \right\vert \leq
K^{d}d!$, which implies both $\frac{1}{\left\vert \det G\left( x\right)
\right\vert }$ and $\left\vert G^{-1}\left( x\right) \frac{y}{\left\vert
y\right\vert }\right\vert =\left\vert \frac{adj\left( G\left( x\right)
\right) }{\det G\left( x\right) }\frac{y}{\left\vert y\right\vert }%
\right\vert $ are Lipshitz in $x$ uniformly over $y$. With 
\begin{equation}
K^{-1}\leq \left\vert G^{-1}\left( x\right) \frac{y}{\left\vert y\right\vert 
}\right\vert \leq \frac{K^{d-1}\left( d-1\right) !{d}^{3/2}}{c_{0}}%
=:c_{1},x\in \mathbf{R},y\in \mathbf{R}_{0}^{d},  \label{e2}
\end{equation}%
we can conclude $\widetilde{m}\left( x,y\right) $ is Lipshitz uniformly over 
$y$. Meanwhile, recall that $\rho $ is $\mu $-H\"{o}lder continuous and
0-homogeneous. Hence 
\begin{eqnarray*}
&&\frac{\left\vert \rho \left( G^{-1}\left( x+h\right) y\right) -\rho \left(
G^{-1}\left( x\right) y\right) \right\vert }{\left\vert h\right\vert ^{\mu }}
\\
&=&\frac{\left\vert \rho \left( G^{-1}\left( x+h\right) \frac{y}{\left\vert
y\right\vert }\right) -\rho \left( G^{-1}\left( x\right) \frac{y}{\left\vert
y\right\vert }\right) \right\vert }{\left\vert G^{-1}\left( x+h\right) \frac{%
y}{\left\vert y\right\vert }-G^{-1}\left( x\right) \frac{y}{\left\vert
y\right\vert }\right\vert ^{\mu }}\cdot \frac{\left\vert G^{-1}\left(
x+h\right) \frac{y}{\left\vert y\right\vert }-G^{-1}\left( x\right) \frac{y}{%
\left\vert y\right\vert }\right\vert ^{\mu }}{\left\vert h\right\vert ^{\mu }%
} \\
&\leq &K\left\vert \nabla \left( G^{-1}\right) \right\vert _{\infty }^{\mu },
\end{eqnarray*}%
and therefore $m\left( x,y\right) $ is $\mu $-continuous in $x$ uniformly
over $y$.

When $\alpha =1$, according to \eqref{sym}, 
\begin{eqnarray*}
\int_{r<\left\vert y\right\vert \leq 1}ym\left( x,y\right) \frac{dy}{%
\left\vert y\right\vert ^{d+\alpha }} &=&\int_{r<\left\vert y\right\vert
\leq 1}\frac{y\rho \left( G^{-1}\left( x\right) y\right) }{\left\vert \det
G\left( x\right) \right\vert \left\vert G^{-1}\left( x\right) \frac{y}{%
\left\vert y\right\vert }\right\vert ^{d+\alpha }}\frac{dy}{\left\vert
y\right\vert ^{d+\alpha }} \\
&=&\int_{r<\left\vert y\right\vert \leq 1}\frac{-y\rho \left( G^{-1}\left(
x\right) y\right) }{\left\vert \det G\left( x\right) \right\vert \left\vert
G^{-1}\left( x\right) \frac{y}{\left\vert y\right\vert }\right\vert
^{d+\alpha }}\frac{dy}{\left\vert y\right\vert ^{d+\alpha }}=0.
\end{eqnarray*}%
Note that, there is $c_{2}=c_{2}\left( c_{0},\alpha,K,d\right) $ such that $%
m\left( x,y\right) \geq c_{2},~\forall x\in \mathbf{R}^{d},\forall y\in 
\mathbf{R}_{0}^{d}$. Then, \textbf{Assumption A} in Theorem 4 of \cite{mp1}
is satisfied.

Let $U=\left\{ y:\left\vert y\right\vert >1\right\} ,U_{1}=\left\{
y:\left\vert y\right\vert \leq 1\right\} $, and $c\left( x,y\right) =G\left(
x\right) y$ if $\left\vert y\right\vert >1$, $c\left( x,y\right) =0$
otherwise. Then $Bu\left( t,x\right) $ can be written as 
\begin{eqnarray*}
&&Bu\left( t,x\right) =\bar{b}\left( x\right) \cdot \nabla u\left(
t,x\right) -\int_{U}[u\left( t,x+c\left( x,y\right) \right) -u\left(
t,x\right) \\
&&\qquad \qquad -\left( \nabla u\left( t,x\right) \cdot c\left( x,y\right)
\right) 1_{U_{1}}\left( y\right) ]\rho \left( y\right) \frac{dy}{%
|y|^{d+\alpha }}.
\end{eqnarray*}%
By (\ref{e2}), $\left\vert y\right\vert \leq c_{1}\left\vert G\left(
x\right) y\right\vert $ for all $x,y\in \mathbf{R}^{d}$, thus $\left\vert
c\left( x,y\right) \right\vert \geq c_{1}^{-1}$ for all $x,y\in \mathbf{R}%
^{d}$. Then by choosing $\varepsilon <c_{1}^{-1}$, we have 
\begin{equation*}
\int_{\left\vert c\left( x,y\right) \right\vert \leq \varepsilon }\left\vert
c\left( x,y\right) \right\vert ^{\alpha }\rho \left( y\right) \frac{dy}{%
\left\vert y\right\vert ^{d+\alpha }}=0,\quad \forall x\in \mathbf{R}^{d}.
\end{equation*}%
Hence, \textbf{Assumption} \textbf{B1 }of Theorem 4 in \cite{mp1} holds.

We might as well set $K>1$. Now, for $\left\vert h\right\vert \leq 1,$ 
\begin{eqnarray*}
&&\int_{\left\vert y\right\vert >1}\left[ \left\vert c\left( x,y\right)
-c\left( x+h,y\right) \right\vert \wedge 1\right] \rho \left( y\right) \frac{%
dy}{\left\vert y\right\vert ^{d+\alpha }} \\
&\leq &K^{2}\int_{\left\vert y\right\vert >1}\left[ \left\vert h\right\vert
\left\vert y\right\vert \wedge 1\right] \frac{dy}{\left\vert y\right\vert
^{d+\alpha }}=K^{2}\int_{\left\vert h\right\vert \left\vert y\right\vert
>\left\vert h\right\vert }\left[ \left\vert h\right\vert \left\vert
y\right\vert \wedge 1\right] \frac{dy}{\left\vert y\right\vert ^{d+\alpha }}
\\
&=&K^{2}\left\vert h\right\vert ^{\alpha }\int_{\left\vert z\right\vert
>\left\vert h\right\vert }\left[ \left\vert z\right\vert \wedge 1\right] 
\frac{dz}{\left\vert z\right\vert ^{d+\alpha }}\leq C\left\vert h\right\vert
\left( 1+1_{\alpha =1}\left\vert \ln \left\vert h\right\vert \right\vert
\right)
\end{eqnarray*}%
for some $C=C\left( \alpha,K,d\right)$, Therefore\textbf{\ Assumption B2} of
Theorem 4 in \cite{mp1} is satisfied and our statement holds.
\end{proof}

Now, consider the backward Kolmogorov equation 
\begin{eqnarray}
\partial _{t}v\left( t,x\right) +\tilde{b}\left( x\right) \cdot \nabla
v\left( t,x\right) +Lv\left( t,x\right) &=&f\left( x\right) ,\quad (t,x)\in
H_{T},  \label{m3} \\
v\left( T,x\right) &=&0,\quad x\in \mathbf{R}^{d},  \notag
\end{eqnarray}%
where $L$ is defined as (\ref{e0}). If $u$ solves (\ref{e1}) in $H_{1}$ with 
$f=f\left( x\right) ,x\in \mathbf{R}^{d}$, then $v\left( t,x\right) =u\left(
T-t,x\right) ,T-1\leq t\leq T,x\in \mathbf{R}^{d}$, solves (\ref{m3}) with $%
T\in \left[ 0,1\right] $. The following statement is an obvious consequence
of Proposition \ref{p1}.

\begin{corollary}
\label{lem5}Let $\alpha \in \lbrack 1,2)$, $\mu \in (0,1)$, $\tilde{b}%
=\left( \tilde{b}^{k}\right) _{1\leq k\leq d}$ with $\tilde{b}^{k}\in C^{\mu
}\left( \mathbf{R}^{d}\right) $, $\left\vert \tilde{b}^{k}\right\vert _{\mu
}\leq K$ $\forall k$, and Assumption \textbf{A}$(K,c_{0})$\textbf{\ }hold.
Let 
\begin{equation*}
\left\vert \rho \left( y\right) -\rho \left( z\right) \right\vert \leq
K\left\vert y-z\right\vert ^{\beta }\text{ for all }\left\vert y\right\vert
=\left\vert z\right\vert =1.
\end{equation*}%
Then for any $f\in C^{\mu }\left( \mathbf{R}^{d}\right) $ and $T\in \left[
0,1\right] $, there exists a unique solution $v\in C^{\alpha +\mu }\left(
H_{T}\right) $ to (\ref{m3}). Moreover, there is a constant $C=C\left(
\alpha ,\mu ,d,K,c_{0}\right) $, independent of $T,$ such that 
\begin{equation*}
\left\vert v\right\vert _{\alpha +\mu }\leq C\left\vert f\right\vert _{\mu },
\end{equation*}%
and for all $0\leq s\leq t\leq T$, 
\begin{equation*}
\left\vert v\left( t,\cdot \right) -v\left( s,\cdot \right) \right\vert _{%
\frac{\alpha }{2}+\mu }\leq C\left( t-s\right) ^{1/2}\left\vert f\right\vert
_{\mu }.
\end{equation*}
\end{corollary}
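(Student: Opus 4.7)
\textbf{Proof plan for Corollary \ref{lem5}.} The strategy is the time reversal indicated in the paragraph preceding the statement. Given $f \in C^{\mu}(\mathbf{R}^d)$, I first view it as a time-independent element of $C^{\mu}(H_1)$; the $C^\mu$ norm defined in Section 2.1 is purely spatial, so $|f|_{C^\mu(H_1)}=|f|_{C^\mu(\mathbf{R}^d)}$. Applying Proposition \ref{p1} with source $-f$ produces the unique $u\in C^{\alpha+\mu}(H_1)$ solving $\partial_t u = Lu + \tilde{b}\cdot\nabla u - f$ with $u(0,\cdot)=0$, together with the estimates $|u|_{\alpha+\mu}\le C|f|_\mu$ and $|u(t,\cdot)-u(s,\cdot)|_{\alpha/2+\mu}\le C(t-s)^{1/2}|f|_\mu$, where $C=C(\alpha,\mu,d,K,c_0)$ is independent of $T$.

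For $T\in[0,1]$ I set $v(t,x):=u(T-t,x)$ on $H_T$, which is well defined since $T-t\in[0,T]\subseteq[0,1]$. Because $L$ and $\nabla$ act only in $x$, the chain rule in $t$ gives $\partial_t v(t,x)=-(\partial_t u)(T-t,x)$, while $\tilde{b}\cdot\nabla v$ and $Lv$ equal $\tilde{b}\cdot\nabla u$ and $Lu$ evaluated at $(T-t,x)$. Substituting the equation satisfied by $u$ yields $\partial_t v + \tilde{b}\cdot\nabla v + Lv = -[Lu+\tilde{b}\cdot\nabla u - f] + \tilde{b}\cdot\nabla u + Lu = f$, and $v(T,\cdot)=u(0,\cdot)=0$, so $v$ solves (\ref{m3}) on $H_T$.

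The two estimates transfer with no loss: $|v|_{C^{\alpha+\mu}(H_T)}\le|u|_{C^{\alpha+\mu}(H_1)}\le C|f|_\mu$ since the $\alpha+\mu$ norm is purely spatial, and $|v(t,\cdot)-v(s,\cdot)|_{\alpha/2+\mu}=|u(T-t,\cdot)-u(T-s,\cdot)|_{\alpha/2+\mu}\le C|t-s|^{1/2}|f|_\mu$, with the same $T$-independent $C$. For uniqueness, any two $C^{\alpha+\mu}(H_T)$ solutions of (\ref{m3}) differ by a solution $w$ of the homogeneous backward problem with $w(T,\cdot)=0$; its time reversal $\tilde{w}(t,x):=w(T-t,x)$ solves the homogeneous forward problem on $H_T$ with zero initial data, and the uniqueness assertion of Proposition \ref{p1}, whose proof works identically on $H_{T'}$ for any $T'\in[0,1]$, forces $\tilde{w}\equiv 0$.

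I anticipate no substantive obstacle: the analytic content is entirely absorbed in Proposition \ref{p1}, and the corollary is a routine change of time variable. The only point worth flagging in the write-up is the sign convention — one must feed $-f$ (rather than $f$) as source into Proposition \ref{p1} in order to match the sign of the source in (\ref{m3}) after time reversal.
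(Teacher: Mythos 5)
Your proof is correct and follows the same route the paper takes: time-reverse the solution of the initial-value problem (\ref{e1}) from Proposition \ref{p1} to obtain a solution of the terminal-value problem (\ref{m3}), transferring the $T$-independent estimates unchanged since the $C^{\alpha+\mu}$ norm is purely spatial. Your remark on the sign is a genuinely good catch: the paper's prose before the corollary says ``if $u$ solves (\ref{e1}) with $f$, then $v(t,x)=u(T-t,x)$ solves (\ref{m3})'', but a direct computation (as you carry out) gives $\partial_t v + \tilde b\cdot\nabla v + Lv = -f$, so one must indeed feed $-f$ into Proposition \ref{p1}; the norm estimates are unaffected because $|-f|_\mu=|f|_\mu$. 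The only place you are slightly informal is the assertion that ``the uniqueness assertion of Proposition \ref{p1} works identically on $H_{T'}$'' --- this is true and standard (Theorem 4 of \cite{mp1} applies on any interval $[0,T']\subseteq[0,1]$), but you could alternatively have restricted the unique solution on $H_1$ and invoked a continuation argument; either way, it matches the level of detail the paper itself gives when calling this an ``obvious consequence''.
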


\subsubsection{Some estimates of stochastic integrals and driving processes}

We present here some stochastic integral estimates related to stable type
point measures. Let $\mathcal{P=P}\left( \mathbb{F}\right) $ be predictable $%
\sigma $-algebra on $[0,1)\times \Omega .$

Let $F:[0,1)\times \Omega \times \mathbf{R}_{0}^{d}\rightarrow \mathbf{R}%
^{m} $ be a $\mathcal{P\times B}\left( \mathbf{R}_{0}^{d}\right) $%
-measurable vector function, 
\begin{equation*}
F=F_{r}\left( y\right) =\left( F_{r}^{i}\left( y\right) \right) _{1\leq
i\leq m},r\in \lbrack 0,1),y\in \mathbf{R}_{0}^{d},
\end{equation*}%
such that for any $T\in \lbrack 0,1)$ a.s.,%
\begin{equation}
\int_{0}^{T}\int_{\left\vert y\right\vert \leq 1}\left\vert F_{r}\left(
y\right) \right\vert ^{2}\rho \left( y\right) \frac{dydr}{\left\vert
y\right\vert ^{d+\alpha }}<\infty .  \label{fo6}
\end{equation}%
Let $0\leq S\leq T\leq 1.$ Consider the stochastic process 
\begin{equation*}
U_{t}=\int_{S}^{t}\int_{\left\vert y\right\vert \leq 1}F_{r}\left( y\right)
q\left( dr,dy\right) ,t\in \left[ S,T\right] .
\end{equation*}%
Note $U_{t}$ is well defined because of (\ref{fo6}).

The following estimates hold.

\begin{lemma}
\label{ele1}Let $\alpha \in \lbrack 1,2),p\in \left( \alpha ,\infty \right)
\,,0\leq \rho \left( y\right) \leq K,y\in \mathbf{R}^{d}$. Assume there is a
predictable nonnegative process $\bar{F}_{r},r\in \lbrack S,T],$ such that%
\begin{equation*}
\left\vert F_{r}\left( y\right) \right\vert \leq \bar{F}_{r}\left\vert
y\right\vert ,r\in \lbrack S,T],y\in \mathbf{R}^{d}.
\end{equation*}
Then there is $C=C\left( d,p,\alpha ,K\right) $ such that%
\begin{equation*}
\mathbf{E}\left[ \sup_{S\leq t\leq T}\left\vert U_{t}\right\vert ^{p}\right]
\leq C\mathbf{E}\int_{S}^{T}\left\vert \bar{F}_{r}\right\vert ^{p}dr .
\end{equation*}
\end{lemma}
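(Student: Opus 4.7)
The plan is to apply a Burkholder--Davis--Gundy / Kunita--type inequality to the compensated Poisson integral $U_{t}$ and then reduce everything, via the pointwise bound $|F_{r}(y)| \le \bar F_{r}|y|$, to scalar integrals of $|y|^{p}$ (and $|y|^{2}$) against the Lévy kernel $\rho(y)|y|^{-d-\alpha}\,dy$ over $\{|y|\le 1\}$. The key quantitative fact is that $\int_{|y|\le 1}|y|^{p-d-\alpha}dy$ is finite precisely when $p>\alpha$, and that $\int_{|y|\le 1}|y|^{2-d-\alpha}dy$ is finite since $\alpha<2$; this is exactly why the hypothesis $p>\alpha$ is sharp. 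Since $T-S\le 1$ throughout, factors of $(T-S)^{\theta}$ with $\theta\ge 0$ that arise along the way can be absorbed into the constant. I will split the argument into the two natural regimes $p\in(\alpha,2]$ and $p>2$.

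For $p\in(\alpha,2]$, I would first apply BDG to the purely discontinuous martingale $U$, producing
\begin{equation*}
\mathbf{E}\Bigl[\sup_{S\le t\le T}|U_{t}|^{p}\Bigr]\le C_{p}\,\mathbf{E}\bigl[[U]_{T}^{p/2}\bigr],\qquad [U]_{T}=\int_{S}^{T}\!\!\int_{|y|\le 1}|F_{r}(y)|^{2}N(dr,dy).
\end{equation*}
Because $p/2\le 1$, the elementary bound $(\sum a_{k})^{p/2}\le \sum a_{k}^{p/2}$ yields $[U]_{T}^{p/2}\le \sum_{s\le T}|\Delta U_{s}|^{p}=\int_{S}^{T}\!\int_{|y|\le 1}|F_{r}(y)|^{p}N(dr,dy)$. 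Taking expectation replaces $N$ by its compensator, and inserting $|F_{r}(y)|^{p}\le \bar F_{r}^{p}|y|^{p}$ and $\rho\le K$ gives
\begin{equation*}
\mathbf{E}\Bigl[\sup_{t}|U_{t}|^{p}\Bigr]\le C_{p}K\,\mathbf{E}\int_{S}^{T}\bar F_{r}^{p}\,dr\cdot \int_{|y|\le 1}|y|^{p-d-\alpha}dy,
\end{equation*}
and the last integral is finite since $p>\alpha$, which closes the case.

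For $p>2$, I would invoke the two--term Kunita inequality for compensated Poisson integrals,
\begin{equation*}
\mathbf{E}\Bigl[\sup_{t}|U_{t}|^{p}\Bigr]\le C_{p}\Bigl\{\mathbf{E}\Bigl(\int_{S}^{T}\!\!\int_{|y|\le 1}|F|^{2}\rho\frac{dy\,dr}{|y|^{d+\alpha}}\Bigr)^{\!p/2}+\mathbf{E}\int_{S}^{T}\!\!\int_{|y|\le 1}|F|^{p}\rho\frac{dy\,dr}{|y|^{d+\alpha}}\Bigr\}.
\end{equation*}
The second term is handled exactly as above, now using $p>\alpha>0$ to ensure integrability near $0$. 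For the first, the inner $y$-integral is bounded by $CK\bar F_{r}^{2}$ since $\alpha<2$, and Hölder's inequality in $r$ gives
\begin{equation*}
\Bigl(\int_{S}^{T}\bar F_{r}^{2}\,dr\Bigr)^{\!p/2}\le (T-S)^{p/2-1}\int_{S}^{T}\bar F_{r}^{p}\,dr\le \int_{S}^{T}\bar F_{r}^{p}\,dr,
\end{equation*}
since $T-S\le 1$ and $p/2-1\ge 0$. Combining the two terms yields the desired bound.

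The main obstacle is really just the choice of the correct martingale inequality in each regime: for $p\le 2$ the subadditivity identity $(\cdot)^{p/2}\le\sum(\cdot)^{p/2}$ is what conjures a $p$-th--power compensator integrand directly, while for $p>2$ one unavoidably picks up both a quadratic and a $p$-th--power term and must use Hölder to unify them. Once the correct form is in hand, everything reduces to the single scalar calculation showing that $p>\alpha$ is exactly the threshold making the small-jump kernel $|y|^{p-d-\alpha}$ integrable at the origin.
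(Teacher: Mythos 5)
Your proof is correct and takes essentially the same two-regime approach as the paper: for $p \in (\alpha,2]$ it uses BDG followed by subadditivity of $x\mapsto x^{p/2}$ to pass to a $p$-th power sum over jumps and then to the compensator, and for $p>2$ it uses the two-term Kunita inequality together with H\"older in $r$ to absorb the quadratic term, with the finiteness of $\int_{|y|\le 1}|y|^{p-d-\alpha}\,dy$ (requiring $p>\alpha$) and $\int_{|y|\le 1}|y|^{2-d-\alpha}\,dy$ (requiring $\alpha<2$) doing the analytic work. The only cosmetic difference is placing the boundary case $p=2$ in the BDG regime rather than the Kunita regime, which makes no difference.
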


\begin{proof}
If $p\geq 2$, then by Lemma \ref{ele5}(i) (e.g. Lemma 4.1 in \cite{lm}),%
\begin{eqnarray}
&&\mathbf{E}\left[ \sup_{S\leq t\leq T}\left\vert U_{t}\right\vert ^{p}%
\right]  \label{fo3} \\
&\leq &C\mathbf{E}\left[ \left( \int_{S}^{T}\int_{\left\vert y\right\vert
\leq 1}\left\vert \bar{F}_{r}y\right\vert ^{2}\frac{dydr}{\left\vert
y\right\vert ^{d+\alpha }}\right) ^{p/2}+\int_{S}^{T}\int_{\left\vert
y\right\vert \leq 1}\left\vert \bar{F}_{r}y\right\vert ^{p}\frac{dydr}{%
\left\vert y\right\vert ^{d+\alpha }}\right]  \notag \\
&\leq &C\mathbf{E}\int_{S}^{T}\left\vert \bar{F}_{r}\right\vert ^{p}dr. 
\notag
\end{eqnarray}

If $p\in \left( \alpha ,2\right) $, then by Burkholder-Davis-Gundy (BDG)
inequality, see Remark \ref{re1}, 
\begin{eqnarray}
&&\mathbf{E}\left[ \sup_{S\leq t\leq T}\left\vert U_{t}\right\vert ^{p}%
\right] \leq C\mathbf{E}\left[ \left( \int_{S}^{T}\int_{\left\vert
y\right\vert \leq 1}\left\vert \bar{F}_{r}y\right\vert ^{2}N\left(
dr,dy\right) \right) ^{p/2}\right]  \label{fo4} \\
&\leq &C\mathbf{E}\left[ \int_{S}^{T}\int_{\left\vert y\right\vert \leq
1}\left\vert \bar{F}_{r}y\right\vert ^{p}N\left( dr,dy\right) \right] \leq C%
\mathbf{E}\int_{S}^{T}\left\vert \bar{F}_{r}\right\vert ^{p}dr.  \notag
\end{eqnarray}
\end{proof}

\begin{lemma}
\label{ele11}Let $0\leq \rho \left( y\right) \leq K,y\in \mathbf{R}^{d}$.
Assume there is a predictable nonnegative process $\bar{F}_{r},r\in \lbrack
S,T],$ such that%
\begin{equation*}
\left\vert F_{r}\left( y\right) \right\vert \leq \bar{F}_{r}\left\vert
y\right\vert ,r\in \lbrack S,T],y\in \mathbf{R}^{d}.
\end{equation*}
(i) Let $\alpha \in (1,2),p\in (0,\alpha )$. Then there is $C=C\left(
d,p,\alpha ,K\right) $ such that%
\begin{equation*}
\mathbf{E}\left[ \sup_{S\leq t\leq T}\left\vert U_{t}\right\vert ^{p}\right]
\leq C\left( \mathbf{E}\left[ \int_{S}^{T}\left\vert \bar{F}_{r}\right\vert
^{\alpha }dr\right] \right) ^{p/\alpha }.
\end{equation*}
(ii) Let $\alpha \in \lbrack 1,2)$, $\bar{F}_{r}\leq M$ a.s. for some
constant $M>0$ and $\mathbf{E}\int_{S}^{T}\bar{F}_{r}^{\alpha }dr<1$. Then
there is $C=C\left( d,\alpha ,M,K\right) $ such that%
\begin{equation*}
\mathbf{E}\left[ \sup_{S\leq t\leq T}\left\vert U_{t}\right\vert ^{\alpha }%
\right] \leq C\mathbf{E}\int_{S}^{T}\bar{F}_{r}^{\alpha }dr\left[
1+\left\vert \ln \left( \mathbf{E}\int_{S}^{T}\bar{F}_{r}^{\alpha }dr\right)
\right\vert \right] .
\end{equation*}
\end{lemma}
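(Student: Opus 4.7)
The plan is to decompose the integral by jump size. For a threshold $\delta \in (0,1]$ to be tuned, I write $U_t = U^{(1)}_t + U^{(2)}_t$, where
\[U^{(1)}_t = \int_S^t\int_{|y|\leq \delta}F_r(y)\,q(dr,dy),\qquad U^{(2)}_t = \int_S^t\int_{\delta<|y|\leq 1}F_r(y)\,q(dr,dy).\]
For the small-jump piece $U^{(1)}$ I would apply Doob's inequality and the $L^2$-isometry, using $|F|\leq \bar{F}|y|$ and $\int_{|y|\leq\delta}|y|^2|y|^{-d-\alpha}dy\leq C\delta^{2-\alpha}$, to obtain
\[\mathbf{E}\sup_t|U^{(1)}_t|^2 \leq CK\delta^{2-\alpha}\mathbf{E}\int_S^T\bar{F}_r^2\,dr.\]
For the moderate-jump piece $U^{(2)}$ I would apply the Burkholder--Davis--Gundy inequality at level $\alpha$ together with subadditivity of $x\mapsto x^{\alpha/2}$ (valid since $\alpha\leq 2$); using $\int_{\delta<|y|\leq 1}|y|^{-d}\,dy \leq C|\ln\delta|$, this yields
\[\mathbf{E}\sup_t|U^{(2)}_t|^\alpha \leq CK|\ln\delta|\,\mathbf{E}\int_S^T\bar{F}_r^\alpha\,dr.\]

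For \textbf{(ii)}, the assumption $\bar{F}\leq M$ delivers the pointwise bound $\bar{F}^2\leq M^{2-\alpha}\bar{F}^\alpha$, which inserted in the first estimate and combined with concavity of $x\mapsto x^{\alpha/2}$ gives $\mathbf{E}\sup|U^{(1)}|^\alpha\leq C\delta^{\alpha(2-\alpha)/2}A_0^{\alpha/2}$, where $A_0:=\mathbf{E}\int_S^T\bar{F}^\alpha\,dr$. The tuning $\delta = A_0^{1/\alpha}$ (permissible by $A_0<1$) reduces the small-jump term to $CA_0$ while turning $|\ln\delta|$ into $|\ln A_0|/\alpha$, so the triangle inequality in $L^\alpha$ yields the claim $\mathbf{E}\sup|U|^\alpha\leq CA_0(1+|\ln A_0|)$.

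For \textbf{(i)}, since $p<\alpha$, Jensen gives $\mathbf{E}\sup|U|^p\leq(\mathbf{E}\sup|U|^\alpha)^{p/\alpha}$, but invoking \textbf{(ii)} directly would retain an unwanted $(1+|\ln A_0|)^{p/\alpha}$ factor and still require $\bar F$ bounded. My plan is therefore to add a secondary truncation of $\bar{F}$: split $\bar{F} = (\bar{F}\wedge M) + (\bar{F}-M)_+$ with $M = A_0^{1/\alpha}$, decompose $U$ into the two corresponding compensated integrals, apply \textbf{(ii)} to the bounded part (whose Hölder/Jensen bound absorbs the logarithm thanks to $p<\alpha$), and apply Lemma \ref{ele1} with some $p' > \alpha$ to the tail, where one has $\mathbf{E}\int(\bar{F}-M)_+^{p'}\,dr\leq M^{p'-\alpha}A_0$. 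The calibration $M=A_0^{1/\alpha}$ is precisely tuned so that both contributions scale as $A_0^{p/\alpha}$.

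The hard part I anticipate is \textbf{(i)}: removing the logarithmic correction from the $L^\alpha$-estimate and simultaneously removing the boundedness hypothesis on $\bar F$ requires a double truncation---in $|y|$ at $\delta$ and in $\bar F$ at $M$---whose levels must be tied to $A_0$ in just the right way. The Hölder slack coming from $p<\alpha$ is exactly what makes the two error contributions balance at the target scale $A_0^{p/\alpha}$ with a constant that does not depend on any further structural assumption on $\bar F$.
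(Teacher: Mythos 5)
Your proof of part (ii) is correct and takes a genuinely different route from the paper. You truncate by the fixed jump-size threshold $|y|\le\delta$ and then use the boundedness $\bar F\le M$ to convert $\mathbf{E}\int\bar F^2$ into $M^{2-\alpha}A_0$; the paper instead truncates by the \emph{random} threshold $\bar F_r|y|\le\varepsilon$, which after the change of variable $z=\bar F_r y$ collapses $\int_{\bar F_r|y|\le\varepsilon}|\bar F_r y|^2\,dy/|y|^{d+\alpha}$ directly into $c\,\bar F_r^\alpha\,\varepsilon^{2-\alpha}$ with no need to compare $\bar F^2$ and $\bar F^\alpha$. Both tunings work; the paper takes $\varepsilon=A_0$, you take $\delta=A_0^{1/\alpha}$. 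Fine.

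Part (i) as planned, however, does not go through, for two reasons. First, the tail estimate is false: you need $\mathbf{E}\int_S^T(\bar F_r-M)_+^{p'}\,dr\le M^{p'-\alpha}A_0$ with $p'>\alpha$, which would require the pointwise bound $(\bar F-M)_+^{p'}\le M^{p'-\alpha}\bar F^\alpha$; but as $\bar F\to\infty$ the left side grows like $\bar F^{p'}$ while the right grows like $\bar F^\alpha$, so the inequality fails. The pointwise inequality you have in mind, $x^{p'-\alpha}\le M^{p'-\alpha}$ for $x\le M$, holds for the \emph{truncated} part $\bar F\wedge M$, not for the tail $(\bar F-M)_+$. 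Swapping the roles does not help either, since applying (ii) to $(\bar F-M)_+$ requires that piece to be bounded, which it is not. Second, the claim that ``Hölder/Jensen absorbs the logarithm thanks to $p<\alpha$'' on the bounded piece is also incorrect: applying (ii) and then Jensen to $U^a$ yields $(A_0^a(1+|\ln A_0^a|))^{p/\alpha}$, which still carries the logarithmic factor and is \emph{not} $O(A_0^{p/\alpha})$ as $A_0\to 0$. The root of both difficulties is the same: with a fixed $|y|$-threshold $\delta$, the small-jump piece forces you to control $\mathbf{E}\int\bar F^2$ (hence to truncate $\bar F$), and then the unbounded remainder $(\bar F-M)_+$ is uncontrollable by $A_0$. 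The paper sidesteps this entirely by truncating the product $\bar F_r|y|\le\varepsilon$: on the small piece the rescaling gives $\bar F_r^\alpha\varepsilon^{2-\alpha}$ directly, and on the large piece BDG at level $p<\alpha$ gives $\int_{|z|>\varepsilon}|z|^{p-d-\alpha}\,dz=c\,\varepsilon^{p-\alpha}$, which converges \emph{without any upper cutoff} precisely because $p<\alpha$. This is the idea your plan is missing and why, after the single tuning $\varepsilon=A_0^{1/\alpha}$, the paper gets (i) with no boundedness assumption on $\bar F$ and no logarithmic loss.
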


\begin{proof}
For any $\varepsilon >0,$%
\begin{eqnarray*}
U_{t} &=&\int_{S}^{t}\int_{\bar{F}_{r}\left\vert y\right\vert \leq
\varepsilon ,\left\vert y\right\vert \leq 1}\cdots +\int_{S}^{t}\int_{\bar{F}%
_{r}\left\vert y\right\vert >\varepsilon ,\left\vert y\right\vert \leq
1}\cdots \\
&:=&U_{t}^{1}+U_{t}^{2},t\in \left[ S,T\right] .
\end{eqnarray*}%
Let $0<p\leq \alpha \in \lbrack 1,2)$. By Remark \ref{re1} (Corollary II in 
\cite{le}), 
\begin{eqnarray}
\mathbf{\qquad }\mathbf{E}\left[ \sup_{S\leq t\leq T}\left\vert
U_{t}^{1}\right\vert ^{p}\right] &\leq &C\mathbf{E}\left[ \left(
\int_{S}^{T}\int_{\left\vert \bar{F}_{r}y\right\vert \leq \varepsilon
,\left\vert y\right\vert \leq 1}\left\vert F_{r}\left( y\right) \right\vert
^{2}\frac{dydr}{\left\vert y\right\vert ^{d+\alpha }}\right) ^{p/2}\right] 
\notag \\
&\leq &C\mathbf{E}\left[ \left( \int_{S}^{T}\int_{\left\vert \bar{F}%
_{r}y\right\vert \leq \varepsilon }\left\vert \bar{F}_{r}y\right\vert ^{2}%
\frac{dydr}{\left\vert y\right\vert ^{d+\alpha }}\right) ^{p/2}\right]
\label{fo0} \\
&\leq &C\varepsilon ^{\left( 1-\alpha /2\right) p}\left( \mathbf{E}%
\int_{S}^{T}\bar{F}_{r}^{\alpha }dr\right) ^{p/2}.  \notag
\end{eqnarray}%
Let $p\in \lbrack 1,2)$. Then by BDG inequality, Remark \ref{re1}, 
\begin{eqnarray}
\mathbf{\qquad }\mathbf{E}\left[ \sup_{S\leq t\leq T}\left\vert
U_{t}^{2}\right\vert ^{p}\right] &\leq &C\mathbf{E}\left[ \left(
\int_{S}^{T}\int_{\left\vert \bar{F}_{r}y\right\vert >\varepsilon
,\left\vert y\right\vert \leq 1}\left\vert F_{r}\left( y\right) \right\vert
^{2}N\left( dr,dy\right) \right) ^{p/2}\right]  \notag \\
&\leq &C\mathbf{E}\left[ \int_{S}^{T}\int_{\left\vert \bar{F}%
_{r}y\right\vert >\varepsilon ,\left\vert y\right\vert \leq 1}\left\vert 
\bar{F}_{r}y\right\vert ^{p}\frac{dydr}{\left\vert y\right\vert ^{d+\alpha }}%
\right] .  \label{fo1}
\end{eqnarray}

If $p\in \lbrack 1,\alpha ),\alpha \in \left( 1,2\right) $, then 
\begin{eqnarray*}
\mathbf{E}\left[ \sup_{S\leq t\leq T}\left\vert U_{t}^{2}\right\vert ^{p}%
\right]\leq C\varepsilon ^{-\left( \alpha -p\right) }\mathbf{E}\int_{S}^{T}%
\bar{F}_{r}^{\alpha }dr.
\end{eqnarray*}%
Taking $\varepsilon =\left( \mathbf{E}\int_{S}^{T}\bar{F}_{r}^{\alpha
}dr\right) ^{1/\alpha }$ and combining with (\ref{fo0}) , 
\begin{equation}
\mathbf{E}\left[ \sup_{S\leq t\leq T}\left\vert U_{t}\right\vert ^{p}\right]
\leq C\left( \mathbf{E}\int_{S}^{T}\bar{F}_{r}^{\alpha }dr\right) ^{p/\alpha
}.  \label{fo2}
\end{equation}

If $p\in \left( 0,1\right) ,\alpha \in \left( 1,2\right) $, then by H\"{o}%
lder inequality and (\ref{fo2}),%
\begin{equation*}
\mathbf{E}\left[ \sup_{S\leq t\leq T}\left\vert U_{t}\right\vert ^{p}\right]
\leq \left( \mathbf{E}\sup_{S\leq t\leq T}\left\vert U_{t}\right\vert
\right) ^{p}\leq C\left( \mathbf{E}\int_{S}^{T}\bar{F}_{r}^{\alpha
}dr\right) ^{p/\alpha }.
\end{equation*}

If $p=\alpha \in \lbrack 1,2)$, then, according to (\ref{fo1}),%
\begin{eqnarray*}
\mathbf{E}\left[ \sup_{S\leq t\leq T}\left\vert U_{t}^{2}\right\vert ^{p}%
\right] &\leq &C\mathbf{E}\left[ \int_{S}^{T}\int_{\varepsilon< \left\vert 
\bar{F}_{r}^{n}y\right\vert \leq M }\left\vert \bar{F}_{r}y\right\vert
^{\alpha }\frac{dydr}{\left\vert y\right\vert ^{d+\alpha }}\right] \\
&\leq &C\left( 1+|\ln \varepsilon |\right)\mathbf{E}\int_{S}^{T}\bar{F}%
_{r}^{\alpha }dr .
\end{eqnarray*}%
Taking $\varepsilon =\mathbf{E}\int_{S}^{T}\bar{F}_{r}^{\alpha }dr$ and
combining with (\ref{fo0}), we see that 
\begin{equation*}
\mathbf{E}\left[ \sup_{S\leq t\leq T}\left\vert U_{t}\right\vert ^{\alpha }%
\right] \leq C\mathbf{E}\int_{S}^{T}\bar{F}_{r}^{\alpha }dr\left[ 1+|\ln
\left( \mathbf{E}\int_{S}^{T}\bar{F}_{r}^{\alpha }dr\right) |\right] .
\end{equation*}
\end{proof}

\begin{lemma}
\label{ele12}Let $0\leq \rho \left( y\right) \leq K,y\in \mathbf{R}^{d}$.
Assume there is a predictable nonnegative process $\bar{F}_{r},r\in \lbrack
S,T],$ such that%
\begin{equation*}
\left\vert F_{r}\left( y\right) \right\vert \leq \bar{F}_{r}\left\vert
y\right\vert ,r\in \lbrack S,T],y\in \mathbf{R}^{d}.
\end{equation*}%
(i) Let $\alpha =1,p\in \left( 0,1\right) $, and $\bar{F}_{r}\leq M$ a.s.
for some constant $M>0.$ Then there is $C=C\left( d,p,\alpha ,M,K\right) $
such that%
\begin{equation*}
\mathbf{E}\left[ \sup_{S\leq t\leq T}\left\vert U_{t}\right\vert ^{p}\right]
\leq C\left( \mathbf{E}\int_{S}^{T}\bar{F}_{r}^{\alpha }dr\right) ^{p}\left[
1+\left\vert \ln \left( \mathbf{E}\int_{S}^{T}\bar{F}_{r}^{\alpha }dr\right)
\right\vert \right] ^{p}.
\end{equation*}
(ii) Let $\alpha =1,p\in \left( 0,1\right) $. Assume $\rho \left( y\right)
=\rho \left( -y\right) ,y\in \mathbf{R}^{d}.$ Suppose there exists a
predictable $m\times d$ matrix valued function $H_{r},r\in \left[ S,T\right]
,$ such that a.s.%
\begin{equation*}
\left\vert F_{r}\left( y\right) -H_{r}y\right\vert \leq M\bar{F}%
_{r}\left\vert y\right\vert ^{1+\beta ^{\prime }}\text{, }r\in \left[ S,T%
\right] ,\left\vert y\right\vert \leq 1,
\end{equation*}%
for some constants $M>0,\beta ^{\prime }>0.$ Then there is $C=C\left(
d,p,\alpha ,M,K\right) $ such that 
\begin{equation*}
\mathbf{E}\left[ \sup_{S\leq t\leq T}\left\vert U_{t}\right\vert ^{p}\right]
\leq C\left( \mathbf{E}\int_{S}^{T}\left\vert \bar{F}_{r}\right\vert
dr\right) ^{p}.
\end{equation*}
\end{lemma}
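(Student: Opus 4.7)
For part (i), I bootstrap from the $p = \alpha = 1$ case already handled by Lemma \ref{ele11}(ii). Since $x \mapsto x^{p}$ is concave for $p \in (0,1)$, Jensen's inequality gives $\mathbf{E}[\sup_{t}|U_{t}|^{p}] \leq (\mathbf{E}[\sup_{t}|U_{t}|])^{p}$. When $A := \mathbf{E}\int_{S}^{T}\bar{F}_{r}\,dr < 1$, Lemma \ref{ele11}(ii) with $\alpha = 1$ bounds $\mathbf{E}[\sup_{t}|U_{t}|]$ by $CA(1 + |\ln A|)$; raising to the $p$-th power yields the claim. When $A \geq 1$, the boundedness $\bar{F}_{r} \leq M$ combined with Lemma \ref{ele1} gives $\mathbf{E}[\sup_{t}|U_{t}|^{2}] \leq CM$, so $\mathbf{E}[\sup_{t}|U_{t}|^{p}] \leq C$; this is absorbed into the claimed RHS since it is bounded below in this regime.

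For part (ii), the plan is to decompose $F_{r}(y) = H_{r}y + R_{r}(y)$ with $R_{r}(y) := F_{r}(y) - H_{r}y$ and correspondingly $U = U^{H} + U^{R}$, where
\[
U^{H}_{t} = \int_{S}^{t}\!\int_{|y|\leq 1} H_{r}y\,q(dr,dy), \qquad U^{R}_{t} = \int_{S}^{t}\!\int_{|y|\leq 1} R_{r}(y)\,q(dr,dy).
\]
The bound $|R_{r}(y)| \leq M\bar{F}_{r}|y|^{1+\beta'}$ with $\beta' > 0$ makes $R_{r}$ absolutely integrable against the intensity measure (since $\int_{|y|\leq 1}|y|^{\beta'-d}dy < \infty$), hence
\[
\mathbf{E}\sup_{t}|U^{R}_{t}| \leq 2\,\mathbf{E}\int_{S}^{T}\!\int_{|y|\leq 1}|R_{r}(y)|\rho(y)|y|^{-d-1}\,dy\,dr \leq CMA,
\]
and Jensen's inequality yields $\mathbf{E}\sup_{t}|U^{R}_{t}|^{p} \leq (CMA)^{p}$, of the required form.

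For $U^{H}$ I exploit the symmetry $\rho(-y) = \rho(y)$. Given $\varepsilon > 0$, split $U^{H} = U^{H,1}_{\varepsilon} + U^{H,2}_{\varepsilon}$ using the predictable cutoff $\bar{F}_{r}|y| \leq \varepsilon$ vs.\ $\bar{F}_{r}|y| > \varepsilon$. The $L^{2}$ (BDG) estimate combined with the $\bar{F}_{r}$-dependent cutoff gives
\[
\mathbf{E}\sup_{t}|U^{H,1}_{\varepsilon}|^{2} \leq C\,\mathbf{E}\int_{S}^{T}\bar{F}_{r}^{2}\,\Bigl(\frac{\varepsilon}{\bar{F}_{r}}\wedge 1\Bigr)\,dr \leq C\varepsilon A,
\]
so by Jensen $\mathbf{E}\sup_{t}|U^{H,1}_{\varepsilon}|^{p} \leq (C\varepsilon A)^{p/2}$. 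The region $\{|y| > \varepsilon/\bar{F}_{r}\}$ is symmetric in $y$, so by symmetry of $\rho$ the compensator $\int H_{r}y\,\rho(y)|y|^{-d-1}dy$ vanishes on this region; hence $U^{H,2}_{\varepsilon}$ reduces to the (pathwise finite) jump integral $\int H_{r}y\,N(dr,dy)$. Using $(\sum a_{i})^{p} \leq \sum a_{i}^{p}$ for $p \in (0,1]$ together with $|H_{r}| \leq \bar{F}_{r}$ (which follows from the remainder bound by letting $|y| \to 0$),
\[
\mathbf{E}\sup_{t}|U^{H,2}_{\varepsilon}|^{p} \leq \mathbf{E}\int_{S}^{T}\!\int_{\varepsilon/\bar{F}_{r}<|y|\leq 1}\bar{F}_{r}^{p}|y|^{p}\rho(y)|y|^{-d-1}\,dy\,dr \leq C\varepsilon^{p-1}A.
\]
Setting $\varepsilon = A$ balances both estimates to $CA^{p}$, and a direct $L^{2}$ argument handles the regime where $A$ is not small; combined with the $U^{R}$ bound, this produces $\mathbf{E}\sup_{t}|U_{t}|^{p} \leq CA^{p}$.

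The main obstacle is the $U^{H}$ estimate. The $\bar{F}_{r}$-dependent cutoff is essential: an $\varepsilon$ independent of $\bar{F}_{r}$ would force the small-jump $L^{2}$ bound to read $\varepsilon\,\mathbf{E}\int \bar{F}_{r}^{2}dr$, destroying the linear-in-$A$ dependence needed on the large-jump side for the final balance to produce $A^{p}$ rather than the weaker $A^{p/2}$. The symmetry assumption $\rho(y) = \rho(-y)$ is equally crucial: without it, compensating the large jumps on $\{|y| > \varepsilon/\bar{F}_{r}\}$ would leave a logarithmic correction and one would only recover the rate of part (i).
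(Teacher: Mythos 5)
Your proof is correct and follows essentially the same route as the paper's: split off small jumps with an $\varepsilon$-cutoff that tracks $\bar F_r$, use the $L^2$/Lenglart bound on the small-jump piece, kill the compensator of the linear term $H_r y$ on the symmetric large-jump region using $\rho(-y)=\rho(y)$, estimate the residual $N$-integral via the elementary inequality $(\sum a_i)^p\le\sum a_i^p$, and handle the remainder $F_r(y)-H_r y$ by its absolute integrability before balancing with $\varepsilon=A$. The only organizational difference is the order of decomposition: you separate $F=Hy+R$ before the jump-size cutoff, whereas the paper does the cutoff first and then writes the large-jump part as the $N$-integral of $F$ plus the compensator integral of $H_ry-F$; the resulting three estimates and the balancing are identical. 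You also explicitly handle the regime $\mathbf{E}\int_S^T\bar F_r\,dr\ge 1$ in part (i) via the boundedness $\bar F_r\le M$ and an $L^2$ bound --- a point the paper's proof leaves implicit since its application of Lemma~\ref{ele11}(ii) formally requires that expectation to be below $1$ --- which is a small but real tightening of the argument.
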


\begin{proof}
(i) Let $\alpha =1,p\in \left( 0,1\right) $. By H\"{o}lder inequality,%
\begin{equation*}
\mathbf{E}\left[ \sup_{S\leq t\leq T}\left\vert U_{t}\right\vert ^{p}\right]
\leq \left( \mathbf{E}\sup_{S\leq t\leq T}\left\vert U_{t}\right\vert
\right) ^{p},
\end{equation*}%
and the estimate follows by Lemma \ref{ele11}(ii).

(ii) For $\varepsilon >0,$ we decompose%
\begin{eqnarray*}
U_{t} &=&\int_{S}^{t}\int_{\bar{F}_{r}\left\vert y\right\vert \leq
\varepsilon ,\left\vert y\right\vert \leq 1}\cdots+\int_{S}^{t}\int_{\bar{F}%
_{r}\left\vert y\right\vert >\varepsilon ,\left\vert y\right\vert \leq
1}\cdots \\
&:=&U_{t}^{1}+U_{t}^{2},t\in \left[ S,T\right] .
\end{eqnarray*}

Let $0<p<1$. By Remark \ref{re1} (Corollary II in \cite{le}), there is $%
C=C\left( K,d,p\right) $ such that 
\begin{eqnarray}
\mathbf{E}\left[ \sup_{S\leq t\leq T}\left\vert U_{t}^{1}\right\vert ^{p}%
\right] &\leq &C\mathbf{E}\left[ \left( \int_{S}^{T}\int_{\left\vert \bar{F}%
_{r}^{n}y\right\vert \leq \varepsilon }\left\vert \bar{F}_{r}y\right\vert
^{2}\frac{dydr}{\left\vert y\right\vert ^{d+1}}\right) ^{p/2}\right]
\label{fo01} \\
&\leq &C\varepsilon ^{p/2}\left( \mathbf{E}\int_{S}^{T}\bar{F}_{r}dr\right)
^{p/2}.  \notag
\end{eqnarray}

We decompose further 
\begin{eqnarray*}
U_{t}^{2} &=&\int_{S}^{t}\int_{\bar{F}_{r}\left\vert y\right\vert
>\varepsilon ,\left\vert y\right\vert \leq 1}F_{r}(y)N\left( dr,dy\right) \\
&&+\int_{S}^{t}\int_{\bar{F}_{r}\left\vert y\right\vert >\varepsilon
,\left\vert y\right\vert \leq 1}\left( H_{r}y-F_{r}(y)\right) \rho \left(
y\right) \frac{dydr}{\left\vert y\right\vert ^{d+1}} \\
&:=&U_{t}^{21}+U_{t}^{22},t\in \left[ S,T\right] .
\end{eqnarray*}%
Now,%
\begin{eqnarray*}
\mathbf{E}\left[ \sup_{S\leq t\leq T}\left\vert U_{t}^{21}\right\vert ^{p} %
\right] &\leq &\mathbf{E}\int_{S}^{T}\int_{\bar{F}_{r}\left\vert
y\right\vert >\varepsilon ,\left\vert y\right\vert \leq 1}\left\vert \bar{F}
_{r}y\right\vert ^{p}N\left( dr,dy\right) \\
&\leq &C\mathbf{E}\int_{S}^{T}\int_{\bar{F}_{r}\left\vert y\right\vert
>\varepsilon }\left\vert \bar{F}_{r}y\right\vert ^{p}\frac{dr}{\left\vert
y\right\vert ^{d+1}}\leq C\varepsilon ^{-\left( 1-p\right) }\mathbf{E}
\int_{S}^{T}\bar{F}_{r}dr,
\end{eqnarray*}
and 
\begin{eqnarray*}
\mathbf{E}\left[ \sup_{S\leq t\leq T}\left\vert U_{t}^{22}\right\vert ^{p}%
\right] &\leq &C\mathbf{E}\left( \int_{S}^{T}\int_{\bar{F}_{r}\left\vert
y\right\vert >\varepsilon ,\left\vert y\right\vert \leq 1}\left\vert
F_{r}(y)-H_{r}y\right\vert \frac{dy}{\left\vert y\right\vert ^{d+1}}%
dr\right) ^{p} \\
&\leq &C\mathbf{E}\left( \int_{S}^{T}\int_{\left\vert y\right\vert \leq 1}%
\bar{F}_{r}\left\vert y\right\vert ^{1+\beta ^{\prime }}\frac{dy}{\left\vert
y\right\vert ^{d+1}}dr\right) ^{p} \\
&\leq &C\mathbf{E}\left[ \left( \int_{S}^{T}\bar{F}_{r}dr\right) ^{p}\right]
\leq C\left( \mathbf{E}\int_{S}^{T}\bar{F}_{r}dr\right) ^{p}.
\end{eqnarray*}

Combining these estimates with (\ref{fo01}) and taking $\varepsilon =\mathbf{%
E}\int_{S}^{T}\bar{F}_{r}dr$, we see that for $\alpha =1,p\in \left(
0,1\right) ,$ there is $C=C\left( \alpha ,d,p,K,M\right) $ such that%
\begin{equation*}
\mathbf{E}\left[ \sup_{S\leq t\leq T}\left\vert U_{t}\right\vert ^{p}\right]
\leq C\left( \mathbf{E}\int_{S}^{T}\bar{F}_{r}dr\right) ^{p}.
\end{equation*}
\end{proof}

Again, let $F:[0,1)\times \Omega \times \mathbf{R}_{0}^{d}\rightarrow 
\mathbf{R}^{m}$ be a $\mathcal{P\times B}\left( \mathbf{R}^{d}\right)$%
-measurable vector function, 
\begin{equation*}
F=F_{r}\left( y\right) =\left( F_{r}^{i}\left( y\right) \right) _{1\leq
i\leq m},r\in \lbrack 0,1),y\in \mathbf{R}_{0}^{d},
\end{equation*}%
such that for any $T\in \lbrack 0,1)$ a.s.%
\begin{equation}
\int_{0}^{T}\int_{\left\vert y\right\vert >1}\left\vert F_{r}\left( y\right)
\right\vert \rho\left( y\right)\frac{dydr}{\left\vert y\right\vert
^{d+\alpha }}<\infty \text{ if }\alpha \in \left[ 1,2\right) .  \label{fo7}
\end{equation}
Let $0\leq S\leq T\leq 1.$ Consider the stochastic process 
\begin{equation*}
Z_{t}=\int_{S}^{t}\int_{\left\vert y\right\vert >1}F_{r}\left( y\right)
N\left( dr,dy\right) ,t\in \left[ S,T\right].
\end{equation*}%
Note $Z_t$ is well defined because of (\ref{fo7}).

Later we will need the following estimates as well.

\begin{lemma}
\label{ele2}Let $\alpha \in \lbrack 1,2),p\in \left( 0,\alpha \right) ,0\leq
\rho \left( y\right) \leq K,y\in \mathbf{R}^{d},0\leq S\leq T\leq 1$. Assume
there is a predictable nonnegative process $\bar{F}_{r},r\in \lbrack S,T],$
such that%
\begin{equation*}
\left\vert F_{r}\left( y\right) \right\vert \leq \bar{F}_{r}\left\vert
y\right\vert ,r\in \lbrack S,T],y\in \mathbf{R}^{d}.
\end{equation*}
Then there is $C=C\left( d,p,\alpha ,K\right) $ such that%
\begin{equation*}
\mathbf{E}\left[ \sup_{S\leq t\leq T}\left\vert Z_{t}\right\vert ^{p}\right]
\leq C\mathbf{E}\int_{S}^{T}\left\vert \bar{F}_{r}\right\vert ^{p}dr.
\end{equation*}
\end{lemma}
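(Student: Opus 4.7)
The plan is to exploit the fact that the integration region $\{|y|>1\}$ has finite Lévy measure (since $\int_{|y|>1}\rho(y)|y|^{-d-\alpha}dy<\infty$), so $Z_t$ is a bona fide pure-jump process with only finitely many (large-jump) summands on $[S,T]$, and no compensation is being subtracted. Accordingly, introduce the nondecreasing dominating process
\[
W_t:=\int_{S}^{t}\int_{|y|>1}|F_{r}(y)|\,N(dr,dy),\qquad S\leq t\leq T,
\]
which by the triangle inequality satisfies $\sup_{S\leq t\leq T}|Z_t|\leq W_T$. It therefore suffices to bound $\mathbf{E}[W_T^p]$.

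\textbf{Case $p\in(0,1]$.} Use the elementary subadditivity $(\sum a_i)^p\leq\sum a_i^p$ for nonnegative summands applied pathwise to the jump sum that defines $W_T$, giving $W_T^{p}\leq\int_{S}^{T}\int_{|y|>1}|F_{r}(y)|^{p}\,N(dr,dy)$. Taking expectation and invoking the compensation (Campbell) formula,
\[
\mathbf{E}[W_T^{p}]\leq\mathbf{E}\int_{S}^{T}\int_{|y|>1}|F_{r}(y)|^{p}\rho(y)\frac{dy\,dr}{|y|^{d+\alpha}}\leq K\Big(\int_{|y|>1}|y|^{p-d-\alpha}dy\Big)\mathbf{E}\int_{S}^{T}\bar{F}_{r}^{p}dr,
\]
where the radial integral is finite precisely because $p<\alpha$. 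This already settles the case $\alpha=1$ entirely, since then $p<1$ is forced.

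\textbf{Case $p\in(1,\alpha)$, which requires $\alpha>1$.} Here, the condition $\alpha>1$ makes $\int_{|y|>1}|y|\,\rho(y)|y|^{-d-\alpha}dy<\infty$, so we may legitimately write $Z_t=M_t+A_t$ with $M_t=\int_S^t\int_{|y|>1}F_r(y)\,q(dr,dy)$ a martingale and $A_t$ its predictable compensator. The compensator is controlled pathwise by $|A_t|\leq C\int_S^t\bar{F}_r\,dr$, so Jensen's inequality (using $T-S\leq 1$) gives $\mathbf{E}[\sup_t|A_t|^p]\leq C\,\mathbf{E}\int_S^T\bar{F}_r^p dr$. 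For the martingale part, apply the BDG inequality (Lemma \ref{ele5}(i) type) to obtain
\[
\mathbf{E}\Big[\sup_t|M_t|^p\Big]\leq C\,\mathbf{E}\Big[\Big(\int_{S}^{T}\int_{|y|>1}|F_{r}(y)|^{2}\,N(dr,dy)\Big)^{p/2}\Big],
\]
and then use that $p/2<1$ (since $p<\alpha<2$) to bring the exponent inside the jump sum via subadditivity, returning to the same integral estimated in Case 1.

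The main obstacle is structural rather than computational: at the threshold $p=\alpha$ the radial integral $\int_{|y|>1}|y|^{p-d-\alpha}dy$ diverges, which is precisely why the estimate degrades at that point and why no analog of the $p>\alpha/\beta$ regime in Lemma \ref{ele1} is available here. The other delicate point is the case division: for $p\leq 1$ the naive subadditive bound is immediate, but for $1<p<\alpha$ one cannot avoid compensating, and one must check that $\alpha>1$ actually permits this compensation.
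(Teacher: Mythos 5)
Your proof is correct and follows essentially the same route as the paper's: for $p\in(0,1]$ you use exactly the pathwise subadditivity of the $N$-integral as a jump sum (this is Remark \ref{re3}(i) in the paper), and for $p\in(1,\alpha)$ your martingale-plus-compensator decomposition with BDG and subadditivity is precisely the standard derivation of Lemma \ref{ele5}(ii) (Kunita's inequality for $N$-integrals, $p\geq1$), which the paper simply invokes. One small citation slip: the BDG bound you label ``Lemma \ref{ele5}(i) type'' is not Lemma \ref{ele5}(i) (stated only for $p\geq2$) but rather the $p\in[1,2]$ estimate of Remark \ref{re1}(ii); the inequality itself is fine.
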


\begin{proof}
Let $p\in \left( 0,1\right) $. Then, according to Remark \ref{re3}, 
\begin{eqnarray*}
\mathbf{E}\left[ \sup_{S\leq t\leq T}\left\vert Z_{t}\right\vert ^{p}\right]
&\leq &C\mathbf{E}\left[ \int_{S}^{T}\int_{\left\vert y\right\vert
>1}|F_{r}\left( y\right) |^{p}\rho \left( y\right) \frac{dydr}{\left\vert
y\right\vert ^{d+\alpha }}\right] \\
&\leq &C\mathbf{E}\int_{S}^{T}\bar{F}_{r}^{p}dr.
\end{eqnarray*}

Let $p\in \lbrack 1,\alpha ),\alpha \in \left( 1,2\right) $. By Lemma \ref%
{ele5}(ii),%
\begin{eqnarray*}
\mathbf{E}\left[ \sup_{t}\left\vert Z_{t}\right\vert ^{p}\right] &\leq &C%
\mathbf{E}\left[ \left( \int_{S}^{T}\int_{\left\vert y\right\vert
>1}\left\vert \bar{F}_{r}y\right\vert \frac{dydr}{\left\vert y\right\vert
^{d+\alpha }}\right) ^{p}+\int_{S}^{T}\int_{\left\vert y\right\vert
>1}\left\vert \bar{F}_{r}y\right\vert ^{p}\frac{dydr}{\left\vert
y\right\vert ^{d+\alpha }}\right] \\
&\leq &C\mathbf{E}\left[ \left( \int_{S}^{T}\bar{F}_{r}dr\right)
^{p}+\int_{S}^{T}\bar{F}_{r}^{p}dr\right] \leq C\mathbf{E}\int_{S}^{T}\bar{F}%
_{r}^{p}dr.
\end{eqnarray*}
\end{proof}

We now apply Lemmas \ref{ele1}-\ref{ele12} to estimate 
\begin{equation*}
L_{t}^{0}=\int_{0}^{t}\int_{\left\vert y\right\vert \leq 1}yq\left(
dr,dy\right) ,t\in \lbrack 0,1].
\end{equation*}

\begin{lemma}
\label{c1}Let $0\leq \rho \left( y\right) \leq K$.\newline
(i) There is $C=C\left( \alpha,d ,p,K\right) $ such that for all $t\in \left[
0,1\right] ,$%
\begin{eqnarray}
\mathbf{E}\left[ \left\vert L_{t}^{0}\right\vert ^{p}\right] &\leq &Ct\text{
if }p>\alpha \in \lbrack 1,2),  \notag \\
\mathbf{E}\left[ \left\vert L_{t}^{0}\right\vert ^{p}\right] &\leq &Ct\left(
1+\left\vert \ln t\right\vert \right) \text{ if }p=\alpha \in \lbrack 1,2), 
\notag \\
\mathbf{E}\left[ \left\vert L_{t}^{0}\right\vert ^{p}\right] &\leq
&Ct^{p/\alpha }\text{ if }p<\alpha \in \left( 1,2\right) ,  \notag
\end{eqnarray}%
and%
\begin{equation*}
\mathbf{E}\left[ \left\vert L_{t}^{0}\right\vert ^{p}\right] \leq
Ct^{p}\left( 1+\left\vert \ln t\right\vert \right) ^{p},p<\alpha =1.
\end{equation*}%
(ii) Let $\alpha =1$ and $\rho \left( y\right) =\rho \left( -y\right) ,y\in 
\mathbf{R}^{d}$. There is $C=C(d,p,K)$ such that for all $t\in \left[ 0,1%
\right] ,$%
\begin{equation*}
\mathbf{E}\left[ \left\vert L_{t}^{0}\right\vert ^{p}\right] \leq Ct^{p}%
\text{ if }p<\alpha =1.
\end{equation*}
\end{lemma}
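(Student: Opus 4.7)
The plan is to apply the stochastic integral estimates from Lemmas \ref{ele1}, \ref{ele11}, and \ref{ele12} directly to the integrand $F_r(y) = y\,\mathbf{1}_{\{|y|\leq 1\}}$ with the dominating predictable envelope $\bar F_r \equiv 1$, taking $S=0,T=t$. All cases then reduce to plugging $\mathbf{E}\int_0^t \bar F_r^{q}\,dr = t$ into the appropriate lemma, so the argument is essentially pattern matching. Note also that since the ambient lemmas actually bound $\mathbf{E}\sup_{S\le s\le T}|U_s|^p$, the conclusions for $\mathbf{E}[|L_t^0|^p]$ follow trivially.

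For part (i), I split by the relation between $p$ and $\alpha$. When $p>\alpha\in[1,2)$, Lemma \ref{ele1} gives $\mathbf{E}[|L_t^0|^p]\le C\mathbf{E}\int_0^t 1\,dr = Ct$. When $p=\alpha\in[1,2)$, Lemma \ref{ele11}(ii) applies (with $M=1$; for $t<1$ the hypothesis $\mathbf{E}\int_0^t \bar F_r^\alpha dr<1$ holds automatically, and the boundary case $t=1$ can be absorbed into the constant), yielding $Ct(1+|\ln t|)$. When $p<\alpha\in(1,2)$, Lemma \ref{ele11}(i) gives $C(\int_0^t 1\,dr)^{p/\alpha}=Ct^{p/\alpha}$. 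Finally, when $p<\alpha=1$, Lemma \ref{ele12}(i) gives $Ct^p(1+|\ln t|)^p$.

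For part (ii), with $\alpha=1$ and $\rho(y)=\rho(-y)$, I apply Lemma \ref{ele12}(ii). The key observation is that since $F_r(y)=y$ is already linear in $y$, one may take $H_r = I_d$ (the $d\times d$ identity), so that $F_r(y)-H_r y\equiv 0$ and the H\"older bound $|F_r(y)-H_r y|\le M\bar F_r |y|^{1+\beta'}$ holds trivially for any $\beta'>0$. The lemma then delivers $\mathbf{E}[|L_t^0|^p]\le C(\mathbf{E}\int_0^t 1\,dr)^p = Ct^p$, as claimed.

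There is no substantive obstacle: the work was already done in the preceding lemmas, whose statements were calibrated precisely so that the specialization to $L_t^0$ is immediate. The only mild point to keep in mind is the $t<1$ restriction hidden in Lemma \ref{ele11}(ii)'s hypothesis $\mathbf{E}\int_S^T \bar F_r^\alpha dr<1$; since $t\in[0,1]$, the endpoint $t=1$ yields a bounded quantity which is controlled by enlarging $C$, so this has no real effect on the stated estimates.
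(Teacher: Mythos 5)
Your proposal is correct and matches the paper's proof exactly: the paper also derives the estimates by applying Lemmas \ref{ele1}--\ref{ele12} to $F_r(y)=y$ (with $\bar F_r\equiv 1$), and your observation about the $t=1$ endpoint in Lemma \ref{ele11}(ii) is a reasonable minor clarification.
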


\begin{proof}
These estimates are obvious consequences of Lemmas \ref{ele1} - \ref{ele12}
when they are applied to $F_{r}\left( y\right) =y,y\in \mathbf{R}^{d}.$
\end{proof}

Now we estimate 
\begin{equation*}
L_{t}=L_{t}^{0}+\int_{0}^{t}\int_{\left\vert y\right\vert >1}yN\left(
dr,dy\right) -1_{\alpha \in (1,2)}t\int_{\left\vert y\right\vert >1}y\rho
\left( y\right) \frac{dy}{\left\vert y\right\vert ^{d+\alpha }},
\end{equation*}%
$t\in \left[ S,T\right]$.

\begin{lemma}
\label{c2}Let $0\leq \rho \left( y\right) \leq K$.\newline
(i) For each $p\in \left( 0,\alpha \right) $ there is $C=C\left( \alpha
,d,p,K\right) $ such that for all $t\in \left[ 0,1\right] ,$%
\begin{equation*}
\mathbf{E}\left[ \left\vert L_{t}\right\vert ^{p}\right] \leq Ct^{p/\alpha }%
\text{ if }\alpha \in \left( 1,2\right) ,
\end{equation*}%
and%
\begin{equation*}
\mathbf{E}\left[ \left\vert L_{t}\right\vert ^{p}\right] \leq Ct^{p}\left(
1+\left\vert \ln t\right\vert \right) ^{p}\text{ if }\alpha =1.
\end{equation*}%
(ii) If $\alpha =1$, $\rho \left( y\right) =\rho \left( -y\right) ,y\in 
\mathbf{R}^{d}$. Then for each $p\in \left( 0,\alpha \right) $ there is $%
C=C(p,d,K)$ such that 
\begin{equation*}
\mathbf{E}\left[ \left\vert L_{t}\right\vert ^{p}\right] \leq Ct^{p},t\in %
\left[ 0,1\right] .
\end{equation*}%
(iii) If $\alpha \in \left[ 1,2\right) ,$ then there is $C=C\left( \alpha
,d,K\right) $ such that%
\begin{equation*}
\mathbf{E}\left[ \left\vert L_{t}\right\vert ^{\alpha }\wedge 1\right] \leq
Ct(1+\left\vert \ln t\right\vert ),t\in \left[ 0,1\right] .
\end{equation*}
\end{lemma}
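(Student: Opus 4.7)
The plan is to decompose
\begin{equation*}
L_t = L_t^0 + M_t - D_t,
\end{equation*}
where $M_t = \int_0^t\int_{|y|>1} y\,N(dr,dy)$ collects the large jumps and $D_t = 1_{\alpha \in (1,2)}\, t \int_{|y|>1} y\,\rho(y)\,\frac{dy}{|y|^{d+\alpha}}$ is the deterministic compensator, a constant vector bounded by $Ct$ (the integral converges because $\alpha>1$ on the set where it is switched on). Each of the three pieces then admits an estimate that is already at hand: Lemma~\ref{c1} handles $L_t^0$, Lemma~\ref{ele2} (with $F_r(y)=y$ and $\bar F_r \equiv 1$) handles $M_t$, and $D_t$ is trivial.

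For parts (i) and (ii), since $p \in (0,\alpha)$, I would apply the elementary inequality $|a+b+c|^p \le C_p(|a|^p+|b|^p+|c|^p)$ ($C_p=1$ for $p\le 1$, $C_p=3^{p-1}$ for $p \ge 1$) and estimate each term separately. Lemma~\ref{ele2} gives $\mathbf{E}|M_t|^p \le Ct$, and $|D_t|^p \le Ct^p$. Since $t \in [0,1]$ and $p/\alpha \le p \le \alpha$, both $Ct$ and $Ct^p$ are dominated by the target rate $t^{p/\alpha}$ when $\alpha \in (1,2)$ and by $t^p(1+|\ln t|)^p$ when $\alpha = 1$; combining with Lemma~\ref{c1} applied to $\mathbf{E}|L_t^0|^p$ in the three situations (generic $\alpha \in (1,2)$, generic $\alpha = 1$, symmetric $\alpha = 1$) yields the three estimates claimed.

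For part (iii) the exponent $p = \alpha$ is critical, so neither $L_t^0$ nor $M_t$ has a moment of the right order without a cut-off, and the truncation $\wedge\,1$ must be exploited. My plan is to condition on the event $A_t = \{N([0,t]\times\{|y|>1\})=0\}$. The large-jump intensity is bounded, hence $\mathbf{P}(A_t^c) \le Ct$. On $A_t$ we have $L_t = L_t^0 - D_t$, so $|L_t|^\alpha \le 2^{\alpha-1}(|L_t^0|^\alpha + Ct^\alpha)$, and therefore
\begin{equation*}
\mathbf{E}[|L_t|^\alpha \wedge 1] \le 2^{\alpha-1}\mathbf{E}|L_t^0|^\alpha + Ct^\alpha + \mathbf{P}(A_t^c) \le C\,\mathbf{E}|L_t^0|^\alpha + Ct.
\end{equation*}
The $p=\alpha$ case of Lemma~\ref{c1}(i) gives $\mathbf{E}|L_t^0|^\alpha \le Ct(1+|\ln t|)$, which dominates both $t^\alpha$ and $t$ on $[0,1]$, delivering the required bound.

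The main obstacle is exactly (iii): $L_t$ has no finite $\alpha$-moment because of the heavy-tailed large jumps, so one cannot simply invoke a Burkholder--Davis--Gundy inequality at the critical exponent $p=\alpha$. The combination of the truncation $\wedge\,1$ with the Poisson estimate of $\mathbf{P}(A_t^c)$ is the device that sidesteps this obstruction and produces the linear-in-$t$ rate, up to the logarithmic factor already contributed by the small-jump piece through Lemma~\ref{c1}.
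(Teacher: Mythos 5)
Your proof is correct, and for parts (i)--(ii) it coincides with what the paper does: decompose $L_t=L_t^0+M_t-D_t$, apply Lemma~\ref{c1} to $L_t^0$, Lemma~\ref{ele2} with $F_r(y)=y$, $\bar F_r\equiv 1$ to the large-jump piece $M_t$, note $|D_t|\le Ct$, and observe that on $[0,1]$ the slowest term is the one from $L_t^0$.

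For part (iii) you take a genuinely different route. The paper isolates $V_t=\int_0^t\int_{|y|>1}yN(dr,dy)$ and bounds $\mathbf{E}[|V_t|^\alpha\wedge 1]\le Ct$ by a jump-by-jump telescoping: since $V_t$ is a finite sum of jumps, $|V_t|^\alpha\wedge 1=\int_0^t\int_{|y|>1}\bigl[(|V_{r-}+y|^\alpha\wedge 1)-(|V_{r-}|^\alpha\wedge 1)\bigr]N(dr,dy)$, and the integrand is bounded by $C(|y|\wedge 1)$ because $x\mapsto |x|^\alpha\wedge 1$ is Lipschitz; then the three pieces are recombined using the subadditivity $(a+b)\wedge 1\le (a\wedge 1)+(b\wedge 1)$. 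You instead condition on the event $A_t$ that no large jump occurs up to time $t$: on $A_t$ the process $L_t$ reduces to $L_t^0-D_t$, where the $\alpha$-moment is finite, and on $A_t^c$ you pay only the truncation bound $1$ weighted by $\mathbf{P}(A_t^c)\le Ct$ (a Poisson tail estimate, since the large-jump intensity $\int_{|y|>1}\rho(y)|y|^{-d-\alpha}dy$ is finite). Both routes cost about the same; yours avoids the Lipschitz-of-$|x|^\alpha\wedge 1$ observation and the subadditivity of the truncation, the paper's avoids introducing the Poisson event. Both deliver the same $Ct(1+|\ln t|)$ bound, with the $\ln$ coming entirely from $L_t^0$ via the $p=\alpha$ case of Lemma~\ref{c1}, exactly as you say.
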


\begin{proof}
The estimates in (i)-(ii) are obvious consequences of Lemmas \ref{c1} and %
\ref{ele2} applied to $F_{r}\left( y\right) =y,y\in \mathbf{R}^{d}$. We
prove (iii) only.

Let 
\begin{eqnarray*}
V_{t} &=&\int_{0}^{t}\int_{\left\vert y\right\vert >1}yN\left( dr,dy\right) ,
\\
P_{t} &=&1_{\alpha \in (1,2)}\int_{0}^{t}\int_{\left\vert y\right\vert
>1}y\rho \left( y\right) \frac{dydr}{\left\vert y\right\vert ^{d+\alpha }},
\end{eqnarray*}%
i.e., $L_{t}=L_{t}^{0}+V_{t}-P_{t},t\in \left[ 0,1\right] .$ According to
Lemma \ref{c1}, there is $C=C\left( \alpha,d ,K\right) $ so that%
\begin{equation*}
\mathbf{E}\left[ \left\vert L_{t}^{0}\right\vert ^{\alpha }\right] \leq
Ct(1+\left\vert \ln t\right\vert ),t\in \left[ 0,1\right]
\end{equation*}%
Now,%
\begin{eqnarray*}
\left\vert V_{t}\right\vert ^{\alpha }\wedge 1
&=&\int_{0}^{t}\int_{\left\vert y\right\vert >1}\left[ \left( \left\vert
V_{r-}+y\right\vert ^{\alpha }\wedge 1\right) -\left( \left\vert
V_{r-}\right\vert ^{\alpha }\wedge 1\right) \right] N\left( dr,dy\right) \\
&\leq &C\int_{0}^{t}\int_{\left\vert y\right\vert >1}\left( \left\vert
y\right\vert \wedge 1\right) N\left( dr,dy\right) ,t\in \left[ 0,1\right] .
\end{eqnarray*}%
Hence%
\begin{equation*}
\mathbf{E}\left[ \left\vert V_{t}\right\vert ^{\alpha }\wedge 1\right] \leq
Ct,t\in \left[ 0,1\right] .
\end{equation*}%
Obviously, $\left\vert P_{t}\right\vert \leq Ct,t\in \left[ 0,1\right] $.
Hence (iii) holds.
\end{proof}

A straightforward consequence of Lemma \ref{c2} is the following statement.

\begin{corollary}
\label{co1}Let $\alpha \in \lbrack 1,2),$~$0\leq \rho \left( y\right) \leq K$%
, $\left\vert b\right\vert \leq K$, $\Vert G\Vert \leq K$.\newline
(i) For each $p\in (0,\alpha ),$ there is $C=C\left( \alpha ,K,d,p\right) $
such that for all $t\in \lbrack 0,1),$ 
\begin{equation*}
\mathbf{E}\left[ \left\vert X_{t}^{n}-X_{\pi _{n}\left( t\right)
}^{n}\right\vert ^{p}\right] \leq Cn^{-p/\alpha }\text{ if }\alpha \in
\left( 1,2\right) ,
\end{equation*}%
and%
\begin{equation*}
\mathbf{E}\left[ \left\vert X_{t}^{n}-X_{\pi _{n}\left( t\right)
}^{n}\right\vert ^{p}\right] \leq C\left( n/\ln n\right) ^{-p}\text{ if }%
\alpha =1.
\end{equation*}%
(ii) If $\alpha =1$, $\rho \left( y\right) =\rho \left( -y\right) ,y\in 
\mathbf{R}^{d}$. Then for each $p\in\left( 0,\alpha\right)$ there is $%
C=C\left( p,d,K\right) $ such that for all $t\in \left[ 0,1\right) $ 
\begin{equation*}
\mathbf{E}\left[ \left\vert X_{t}^{n}-X_{\pi _{n}\left( t\right)
}^{n}\right\vert ^{p}\right] \leq Cn^{-p} .
\end{equation*}
(iii) There is $C=C\left( \alpha ,K,d\right) $ such that for all $t\in
\lbrack 0,1),$ 
\begin{equation*}
\mathbf{E}\left[ \left\vert X_{t}^{n}-X_{\pi _{n}\left( t\right)
}^{n}\right\vert ^{\alpha }\wedge 1\right] \leq C\left( n/\ln n\right) ^{-1}.
\end{equation*}
\end{corollary}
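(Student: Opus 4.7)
The plan is to bound the local increment $X_t^n - X_{\pi_n(t)}^n$ pointwise by a deterministic drift contribution plus a constant multiple of the corresponding increment of $L$, then invoke Lemma \ref{c2}. The point is that on each grid subinterval $[k/n,(k+1)/n)$ the coefficients of (\ref{m2}) are frozen, so the Euler increment decouples cleanly into a drift piece of order $n^{-1}$ and a jump piece driven by the Lévy process.

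Concretely, fix $t \in [0,1)$ and set $s = \pi_n(t)$, so $t - s < n^{-1}$. From (\ref{m2}),
\[
X_t^n - X_s^n = b(X_s^n)(t-s) + G(X_s^n)(L_t - L_s),
\]
and the assumed bounds $|b| \leq K$, $\|G\| \leq K$ give
\[
|X_t^n - X_s^n| \leq K n^{-1} + K |L_t - L_s|.
\]
Since $N$ has a time-homogeneous compensator, $L$ is a Lévy process with stationary increments, so $L_t - L_s$ has the same law as $L_{t-s}$; because $s$ is deterministic, taking expectations is immediate. Applying the elementary inequality $(a+b)^p \leq a^p + b^p$ for $p \leq 1$ (respectively $(a+b)^p \leq 2^{p-1}(a^p+b^p)$ for $p \geq 1$) and then Lemma \ref{c2} at time $t - s \leq n^{-1}$ yields all three parts directly.

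The only bookkeeping step is to check that the drift error is dominated by the Lévy error in each regime. For $\alpha \in (1,2)$, Lemma \ref{c2}(i) gives $\mathbf{E}|L_{t-s}|^p \leq C n^{-p/\alpha}$, which absorbs $K^p n^{-p}$ since $p > p/\alpha$. For $\alpha = 1$, the logarithmic factor from Lemma \ref{c2}(i) produces $C(n/\ln n)^{-p}$, again swallowing $K^p n^{-p}$, while the symmetric hypothesis in (ii) removes the logarithm via Lemma \ref{c2}(ii). For (iii), use the truncated subadditivity $(a+b)^\alpha \wedge 1 \leq 2^{\alpha-1}\bigl((a^\alpha \wedge 1) + (b^\alpha \wedge 1)\bigr)$ with Lemma \ref{c2}(iii), giving $C n^{-1}(1+\ln n) = C(n/\ln n)^{-1}$, and note the drift contribution $K^\alpha n^{-\alpha} \leq K^\alpha n^{-1}$ is of the same or lower order. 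There is no genuine obstacle: the corollary is a transparent consequence of the moment bounds for $L$ on short intervals.
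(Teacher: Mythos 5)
Your proof is correct and follows the paper's approach: freeze the coefficients on the grid subinterval, which gives the closed form $X_t^n - X_{\pi_n(t)}^n = b(X_{\pi_n(t)}^n)(t-\pi_n(t)) + G(X_{\pi_n(t)}^n)(L_t - L_{\pi_n(t)})$, then bound pointwise using $|b|\leq K$, $\|G\|\leq K$, invoke stationarity $L_t - L_{\pi_n(t)} \overset{d}{=} L_{t-\pi_n(t)}$, and apply Lemma \ref{c2}. The paper's own proof is just a terse statement that "All the estimates immediately follow from Lemma \ref{c2}"; you have merely spelled out the decoupling on grid cells and the bookkeeping that the paper leaves implicit, including the truncated subadditivity needed for part (iii).
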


\begin{proof}
For $\forall t\in \lbrack 0,1)$, there is $j\in \left\{ 0,1,\ldots
,n-1\right\} $ so that $j/n\leq t<(j+1)/n$, and $\pi _{n}\left( t\right)
:=j/n$. Thus $0\leq t-\pi _{n}\left( t\right) \leq 1/n$. Note that for any $%
S,t>0$, $L_{t}=L_{S+t}-L_{S}$ in distribution. All the estimates immediately
follow from Lemma \ref{c2}.
\end{proof}

Finally, applying Lemma \ref{c1} we derive

\begin{corollary}
\label{co2}Let $0\leq \rho \left( y\right) \leq K$.\newline
(i) There is $C=C\left( \alpha ,d,p,K\right) $ such that for all $t\in \left[
0,1\right) ,$%
\begin{equation*}
\mathbf{E}\left[ \left\vert Y_{t}^{n}-Y_{\pi _{n}\left( t\right)
}^{n}\right\vert ^{p}\right] \leq C\left\{ 
\begin{array}{cc}
n^{-1} & \text{if }p>\alpha \in \lbrack 1,2), \\ 
\left( n/\ln n\right) ^{-1} & \text{if }p=\alpha \in \lbrack 1,2), \\ 
n^{-p/\alpha } & \text{if }p<\alpha \in \left( 1,2\right) ,%
\end{array}%
\right.
\end{equation*}%
and%
\begin{equation*}
\mathbf{E}\left[ \left\vert Y_{t}^{n}-Y_{\pi _{n}\left( t\right)
}^{n}\right\vert ^{p}\right] \leq C\left( n/\ln n\right) ^{-p}\text{ if }%
p<\alpha =1.
\end{equation*}%
(ii) Let $\alpha =1$, $\rho \left( y\right) =\rho \left( -y\right) ,y\in 
\mathbf{R}^{d}$. There is $C=C(p,d,K)$ such that for all $t\in \left[
0,1\right) ,$%
\begin{equation*}
\mathbf{E}\left[ \left\vert Y_{t}^{n}-Y_{\pi _{n}\left( t\right)
}^{n}\right\vert ^{p}\right] \leq Cn^{-p}\text{ if }p<\alpha =1.
\end{equation*}
\end{corollary}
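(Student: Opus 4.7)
The plan is to decompose the one-step displacement into its drift and compensated jump parts,
$$Y_t^n - Y_{\pi_n(t)}^n = \int_{\pi_n(t)}^t b(Y_{\pi_n(s)}^n)\,ds + \int_{\pi_n(t)}^t \int_{|y|\le 1} G(Y_{\pi_n(s)}^n)\,y\, q(ds,dy),$$
and estimate each piece separately. Since $|b|_0\le K$ and $t-\pi_n(t)\le 1/n$, the Lebesgue integral is bounded pointwise by $K/n$, so its $p$-th moment is at most $Cn^{-p}$. A quick check of the four exponents appearing in the corollary shows that $n^{-p}$ is dominated by each of the target rates in every subcase, so the drift contribution is negligible and all the work sits in the stochastic integral.

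To control it, I would set $F_s(y) := G(Y_{\pi_n(s)}^n)\,y$, which is predictable in $s$ and satisfies $|F_s(y)|\le \|G\|\,|y| \le K|y|$, so in the notation of Section~2 one may take the constant bounding process $\bar F_s \equiv K$. On the window $[\pi_n(t),t]$ of length at most $1/n$ this gives $\mathbf{E}\int_{\pi_n(t)}^t \bar F_s^{q}\,ds \le K^q/n$ for every $q>0$, and each line of part~(i) then follows by inserting this into the appropriate lemma of Section~2: Lemma~\ref{ele1} for $p>\alpha$ yields $Cn^{-1}$; Lemma~\ref{ele11}(ii) for $p=\alpha\in[1,2)$ yields the logarithmic bound $C(K^\alpha/n)(1+|\ln(K^\alpha/n)|) \le C(n/\ln n)^{-1}$; Lemma~\ref{ele11}(i) for $p<\alpha\in(1,2)$ yields $Cn^{-p/\alpha}$; and Lemma~\ref{ele12}(i) for $p<\alpha=1$ yields $C(K/n)^p(1+\ln n)^p \le C(n/\ln n)^{-p}$.

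For part~(ii), with $\alpha=1$ and $\rho(-y)=\rho(y)$, I would exploit the fact that $y\mapsto F_s(y)=G(Y_{\pi_n(s)}^n)y$ is already linear in $y$ and apply Lemma~\ref{ele12}(ii) with $H_r := G(Y_{\pi_n(r)}^n)$. The residual $F_r(y)-H_r y$ vanishes identically, so the hypothesis $|F_r(y)-H_r y|\le M\bar F_r|y|^{1+\beta'}$ holds vacuously for any $\beta'>0$, and the lemma produces $\mathbf{E}\bigl[\sup_{s\le t}|U_s|^p\bigr]\le C\bigl(\mathbf{E}\int_{\pi_n(t)}^t \bar F_r\,dr\bigr)^p \le Cn^{-p}$, precisely the bound claimed. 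The proof as a whole is essentially a lookup into Section~2; the only subtlety worth flagging is the matching of the five regimes in the corollary's statement to the five cases covered by Lemmas~\ref{ele1}--\ref{ele12}, and in particular the observation that the linearity of the integrand in $y$ makes the symmetric version of Lemma~\ref{ele12} apply with a zero residual, which is exactly what removes the $\ln n$ factor in part~(ii).
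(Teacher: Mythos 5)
Your proposal is correct and follows essentially the same route as the paper: the paper splits off the drift, uses that $\pi_n(s)=\pi_n(t)$ on the window so the increment factors as $G(Y^n_{\pi_n(t)})(L^0_t-L^0_{\pi_n(t)})$, and then cites Lemma~\ref{c1} (itself proved by applying Lemmas~\ref{ele1}--\ref{ele12} with $F_r(y)=y$) together with stationarity of the increments of $L^0$. You simply inline that step, applying Lemmas~\ref{ele1}--\ref{ele12} directly to $F_s(y)=G(Y^n_{\pi_n(s)})y$ with $\bar F\equiv K$, which is the same chain of inequalities and bypasses the explicit stationarity argument.
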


\begin{proof}
For $\forall t\in \lbrack 0,1)$, there is $j\in \left\{ 0,1,\ldots
,n-1\right\} $ so that $j/n\leq t<(j+1)/n$, and $\pi _{n}\left( t\right)
:=j/n$. Thus $0\leq t-\pi _{n}\left( t\right) \leq 1/n$. Note that for any $%
S,t>0$, $L_{t}^{0}=L_{S+t}^{0}-L_{S}^{0}$ in distribution. All the estimates
immediately follow from Lemma \ref{c1}.
\end{proof}

\section{Proof of Main Results}

We start with the Lipshitz, possibly completely degenerate, case and derive
the rate of convergence directly.

\subsection{Proof of Proposition \protect\ref{pro3}}

Note that 
\begin{equation*}
L_{t}=L_{t}^{0}+V_{t}-1_{\alpha \in \left( 1,2\right) }t\int_{\left\vert
y\right\vert >1}y\rho \left( y\right) \frac{dy}{\left\vert y\right\vert
^{d+\alpha }},t\in \left[ 0,1\right] ,
\end{equation*}%
where%
\begin{equation*}
V_{t}=\int_{0}^{t}\int_{\left\vert y\right\vert >1}yN\left( dr,dy\right)
,t\in \left[ 0,1\right] .
\end{equation*}%
Denote 
\begin{equation*}
\tilde{b}\left( x\right) =b\left( x\right) -1_{\alpha \in \left( 1,2\right)
}G\left( x\right) \int_{\left\vert y\right\vert >1}y\rho \left( y\right)%
\frac{dy}{\left\vert y\right\vert ^{d+\alpha }},x\in \mathbf{R}^{d}.
\end{equation*}

Let $X_{t}$ be the strong solution to (\ref{m1}) and $\bar{X}%
_{t}^{n}:=X_{t}^{n}-X_{t},t\in \left[ 0,1\right] $. Let $0\leq S\leq T\leq
1. $ Then%
\begin{eqnarray}
\bar{X}_{t}^{n} &=&\bar{X}_{S}^{n}+\int_{S}^{t}[\tilde{b}\left( X_{\pi
_{n}\left( r\right) }^{n}\right) -\tilde{b}\left( X_{r}^{n}\right)
]dr+\int_{S}^{t}[\tilde{b}\left( X_{r}^{n}\right) -\tilde{b}\left(
X_{r}\right) ]dr  \notag \\
&&+\int_{S}^{t}[G\left( X_{\pi _{n}\left( r\right) }^{n}\right) -G\left(
X_{r-}^{n}\right) ]dL_{r}^{0}+\int_{S}^{t}[G\left( X_{r-}^{n}\right)
-G\left( X_{r-}^{n}\right) ]dL_{r}^{0}  \notag \\
&&+\int_{S}^{t}[G\left( X_{\pi _{n}\left( r\right) }^{n}\right) -G\left(
X_{r-}^{n}\right) ]dV_{r}+\int_{S}^{t}[G\left( X_{r-}^{n}\right) -G\left(
X_{r-}^{n}\right) ]dV_{r}  \notag \\
&:=&\bar{X}_{S}^{n}+\sum_{k=1}^{6}A_{t}^{k},t\in \left[ S,T\right] .  \notag
\end{eqnarray}

\emph{Estimates of }$A_t^{1}$\emph{\ and}{\footnotesize \ }$A_t^{2}$. For $%
p\in \lbrack 1,\alpha ),\alpha \in \left( 1,2\right) ,$ by H\"{o}lder
inequality, 
\begin{eqnarray*}
\mathbf{E}\left[ \sup_{S\leq t\leq T}\left\vert
A_{t}^{1}+A_{t}^{2}\right\vert ^{p}\right] &\leq &C\mathbf{E}\left[ \left(
\int_{S}^{T}[\left\vert X_{\pi _{n}\left( r\right)
}^{n}-X_{r}^{n}\right\vert \wedge 1]dr\right) ^{p}+\left(
\int_{S}^{T}\left\vert \bar{X}_{r}^{n}\right\vert dr\right) ^{p}\right] \\
&\leq &C\mathbf{E}\left[ \int_{S}^{T}[\left\vert X_{\pi _{n}\left( r\right)
}^{n}-X_{r}^{n}\right\vert ^{p}\wedge 1]dr+\left( T-S\right) ^{p}\sup_{S\leq
t\leq T}\left\vert \bar{X}_{t}^{n}\right\vert ^{p}\right] ,
\end{eqnarray*}%
and for $p\in \left( 0,1\right) ,\alpha \in \lbrack 1,2),$ 
\begin{eqnarray*}
\mathbf{E}\left[ \sup_{S\leq t\leq T}\left\vert
A_{t}^{1}+A_{t}^{2}\right\vert ^{p}\right] &\leq &C\left( \mathbf{E}%
\int_{S}^{T}[\left\vert X_{\pi _{n}\left( r\right)
}^{n}-X_{r}^{n}\right\vert \wedge 1]dr\right) ^{p} \\
&&+C(T-S)^{p}\mathbf{E}\left[ \sup_{S\leq t\leq T}\left\vert \bar{X}%
_{t}^{n}\right\vert ^{p}\right]
\end{eqnarray*}%
for some $C=C\left(\alpha,d, K,p\right) $. By Corollary \ref{co1}, for $p\in
\left( 0,\alpha \right) $ there is $C=C\left(\alpha,d, K,p\right) $ such that%
\begin{equation*}
\mathbf{E}\left[ \sup_{S\leq t\leq T}\left\vert
A_{t}^{1}+A_{t}^{2}\right\vert ^{p}\right] \leq C [l\left( n\right)
^{-p/\alpha }+\left( T-S\right) ^{p}\mathbf{E}[\sup_{S\leq t\leq
T}\left\vert \bar{X}_{t}^{n}\right\vert ^{p}],
\end{equation*}%
where $l\left( n\right) =n$ if $p\in (0,\alpha ),\alpha \in \left(
1,2\right) $, and $l\left( n\right) =n/\ln n$ if $0<p<\alpha =1.$

\emph{Estimate of }$A_t^{3}.$ By definition, 
\begin{equation*}
A_{t}^{3}=\int_{S}^{t}\int_{\left\vert y\right\vert \leq 1}\left[ G\left(
X_{\pi _{n}\left( r\right) }^{n}\right) -G\left( X_{r-}^{n}\right) \right]
yq\left( dr,dy\right) ,t\in \left[ S,T\right] .
\end{equation*}%
According to Corollary \ref{co1}, there is $C=C\left( \alpha,d ,K\right) $
so that 
\begin{eqnarray*}
R :=\mathbf{E}\int_{S}^{T}[\left\vert X_{\pi _{n}\left( r\right)
}^{n}-X_{r}^{n}\right\vert ^{\alpha }\wedge 1]dr\leq C\left( n/\ln n\right)
^{-1}.
\end{eqnarray*}

Apply Lemma \ref{ele11} with $\bar{F}_{r}=\left\vert G\left( X_{\pi
_{n}\left( r\right) }^{n}\right) -G\left( X_{r-}^{n}\right) \right\vert
,r\in \left[ 0,1\right] $, then for all $p\in \left( 0,\alpha \right)
,\alpha \in \left( 1,2\right) ,$ there is $C=C\left( \alpha ,d,K\right) $
such that 
\begin{equation*}
\mathbf{E}\left[ \sup_{S\leq t\leq T}\left\vert A_{t}^{3}\right\vert ^{p} %
\right] \leq C R^{p/\alpha }=C \left( n/\ln n\right) ^{-p/\alpha }.
\end{equation*}

If $\alpha =1,p\in \left( 0,1\right) $, by Lemma \ref{ele11} and H\"{o}lder
inequality, 
\begin{eqnarray*}
\mathbf{E}\left[ \sup_{S\leq t\leq T}\left\vert A_{t}^{3}\right\vert ^{p}%
\right]&\leq &C R^{p}\left( 1+\left\vert\ln R\right\vert\right) ^{p} \leq C
\left( n/\ln n\right) ^{-p}[1+\ln \left( n/\ln n\right) ]^p \\
& \leq& C \left[ n/\left( \ln n\right) ^{2}\right] ^{-p}.
\end{eqnarray*}

If $\alpha =1,p\in \left( 0,1\right), $ and $\rho \left( y\right) =\rho
\left( -y\right) ,y\in \mathbf{R}^{d}$, then applying Lemma \ref{ele12} with 
$H_{r}= G\left( X_{\pi_{n}\left( r\right) }^{n}\right) -G\left(
X_{r-}^{n}\right) ,M=0$, we have 
\begin{equation*}
\mathbf{E}\left[ \sup_{S\leq t\leq T}\left\vert A_{t}^{3}\right\vert ^{p}%
\right] \leq C\left( n/\ln n\right) ^{-p}.
\end{equation*}

\emph{Estimate of }$A_t^{4}.$ By definition, 
\begin{equation*}
A_{t}^{4}=\int_{S}^{t}\int_{\left\vert y\right\vert \leq 1}\left[ G\left(
X_{r-}^{n}\right) -G\left( X_{r-}\right) \right] yq\left( dr,dy\right) ,t\in %
\left[ S,T\right] .
\end{equation*}%
According to Remark \ref{re1}, for $p\in \left( 0,2\right) ,$ there is $%
C=C\left( p,K\right) $ so that%
\begin{equation*}
\mathbf{E}\left[ \sup_{S\leq t\leq T}\left\vert A_{t}^{4}\right\vert ^{p}%
\right] \leq C\mathbf{E}\left[ \left( \int_{S}^{T}\left\vert \bar{X}%
_{r}^{n}\right\vert ^{2}dr\right) ^{p/2}\right] \leq C\left( T-S\right)
^{p/2}\mathbf{E}\left[ \sup_{S\leq t\leq T}\left\vert \bar{X}%
_{t}^{n}\right\vert ^{p}\right] .
\end{equation*}

\emph{Estimate of }$A_t^{5}.$ By definition,%
\begin{equation*}
A_{t}^{5}=\int_{S}^{t}\int_{\left\vert y\right\vert >1}[G\left( X_{\pi
_{n}\left( r\right) }^{n}\right) -G\left( X_{r-}^{n}\right) ]yN\left(
dr,dy\right) ,t\in \left[ S,T\right] .
\end{equation*}

Applying Lemma \ref{ele2} with $F_r\left(y\right) =[G\left( X_{\pi
_{n}\left( r\right) }^{n}\right) -G\left( X_{r-}^{n}\right) ]y$, $\bar{F}%
_{r}=\big\vert G\left( X_{\pi_{n}\left( r\right) }^{n}\right)$ $-G\left(
X_{r-}^{n}\right) \big\vert$, $r\in \left[ 0,1\right] ,y\in \mathbf{R}^{d}$,
and combining 
\begin{equation*}
\bar{F}_{r}=2K\left( \left\vert X_{\pi _{n}\left( r\right)
}^{n}-X_{r}^{n}\right\vert \wedge 1\right) ,r\in \left[ 0,1\right] ,
\end{equation*}
we can conclude for $p\in \left( 0,\alpha \right) $ that 
\begin{equation*}
\mathbf{E}\left[ \sup_{S\leq t\leq T}\left\vert A_{t}^{5}\right\vert ^{p}%
\right] \leq C\mathbf{E}\int_{S}^{T}(\left\vert X_{\pi _{n}\left( r\right)
}^{n}-X_{r}^{n}\right\vert ^{p}\wedge 1)dr.
\end{equation*}%
Hence by Corollary \ref{co1},%
\begin{eqnarray*}
\mathbf{E}\left[ \sup_{S\leq t\leq T}\left\vert A_{t}^{5}\right\vert ^{p}%
\right] &\leq &C\left( n/\ln n\right) ^{-p}\text{ if }p<\alpha =1, \\
\mathbf{E}\left[ \sup_{S\leq t\leq T}\left\vert A_{t}^{5}\right\vert ^{p}%
\right] &\leq &Cn^{-p/\alpha }\text{ if }0<p<\alpha \in \left( 1,2\right) .
\end{eqnarray*}

\emph{Estimate of }$A_t^{6}.$ By definition, 
\begin{equation*}
A_{t}^{6}=\int_{S}^{t}\int_{\left\vert y\right\vert >1}\left[ G\left(
X_{r-}^{n}\right) -G\left( X_{r-}\right) \right] yN\left( dr,dy\right) ,t\in %
\left[ S,T\right] .
\end{equation*}%
By Lemma \ref{ele5} (ii), for $p\in \lbrack 1,\alpha ),\alpha \in \left(
1,2\right) \,$\ there is $C=C\left( \alpha,d ,p,K\right) $ so that%
\begin{eqnarray*}
\mathbf{E}\left[ \sup_{S\leq t\leq T}\left\vert A_{t}^{6}\right\vert ^{p}%
\right] &\leq &C\mathbf{E}\left[ \left( \int_{S}^{T}\left\vert \bar{X}%
_{r}^{n}\right\vert dr\right) ^{p}+\int_{S}^{T}\left\vert \bar{X}%
_{r}^{n}\right\vert ^{p}dr\right] \\
&\leq &C\mathbf{E}\int_{S}^{T}\left\vert \bar{X}_{r}^{n}\right\vert
^{p}dr\leq C\left( T-S\right) \mathbf{E}\left[ \sup_{S\leq t\leq
T}\left\vert \bar{X}_{t}^{n}\right\vert ^{p}\right] .
\end{eqnarray*}%
According to Remark \ref{re3}, for $p\in \left( 0,1\right) \,\ $there is $%
C=C\left( \alpha,d ,p,K\right) $ such that%
\begin{equation*}
\mathbf{E}\left[ \sup_{S\leq t\leq T}\left\vert A_{t}^{6}\right\vert ^{p}%
\right] \leq C\left( T-S\right) \mathbf{E}\left[ \sup_{S\leq t\leq
T}\left\vert \bar{X}_{t}^{n}\right\vert ^{p}\right] .
\end{equation*}

Summarizing, for $p\in \left( 0,\alpha \right) $ there is $C=C\left(
\alpha,d ,p,K\right) $ so that for any $S\leq t\leq T\leq 1,$%
\begin{equation*}
\mathbf{E}\left[ \sup_{S\leq t\leq T}\left\vert \bar{X}_{t}^{n}\right\vert
^{p}\right] \leq C\left\{ \left( n/\ln n\right) ^{-p/\alpha }+\left(
T-S\right) ^{p/2}\mathbf{E}\left[ \sup_{S\leq t\leq T}\left\vert \bar{X}%
_{t}^{n}\right\vert ^{p}\right] \right\}
\end{equation*}%
if $\alpha \in \left( 1,2\right) $, and%
\begin{equation*}
\mathbf{E}\left[ \sup_{S\leq t\leq T}\left\vert \bar{X}_{t}^{n}\right\vert
^{p}\right] \leq C\left\{ \left[ n/\left( \ln n\right) ^{2}\right]
^{-p}+\left( T-S\right) ^{p/2}\mathbf{E}\left[ \sup_{S\leq t\leq
T}\left\vert \bar{X}_{t}^{n}\right\vert ^{p}\right] \right\}
\end{equation*}%
if $\alpha =1$. If $\alpha =1$, and $\rho \left( y\right) =\rho \left(
-y\right) ,y\in \mathbf{R}^{d}$, then 
\begin{equation*}
\mathbf{E}\left[ \sup_{S\leq t\leq T}\left\vert \bar{X}_{t}^{n}\right\vert
^{p}\right] \leq C\left\{ \left( n/\ln n\right) ^{-p}+\left( T-S\right)
^{p/2}\mathbf{E}\left[ \sup_{S\leq t\leq T}\left\vert \bar{X}%
_{t}^{n}\right\vert ^{p}\right] \right\} .
\end{equation*}%
If $C\left( T-S\right) ^{p/2}\leq 1/2,$ then there is $\tilde{C}=\tilde{C}%
\left( \alpha,d ,p,K\right) $ such that for $p\in \left( 0,\alpha \right)$, 
\begin{eqnarray}
\mathbf{E}\left[ \sup_{S\leq t\leq T}\left\vert \bar{X}_{t}^{n}\right\vert
^{p}\right] &\leq &\tilde{C}\left( n/\ln n\right) ^{-p/\alpha }\text{ if }%
\alpha \in \left( 1,2\right) ,  \notag \\
\mathbf{E}\left[ \sup_{S\leq t\leq T}\left\vert \bar{X}_{t}^{n}\right\vert
^{p}\right] &\leq &\tilde{C}\left[ n/\left( \ln n\right) ^{2}\right] ^{-p}%
\text{ if }\alpha =1,  \label{for3} \\
\mathbf{E}\left[ \sup_{S\leq t\leq T}\left\vert \bar{X}_{t}^{n}\right\vert
^{p}\right] &\leq &\tilde{C}\left( n/\ln n\right) ^{-p}\text{ if }\alpha =1 
\text{ with symmetry }.  \notag
\end{eqnarray}

The claim now follows by Lemma \ref{gle}.

\subsection{Proof of Proposition \protect\ref{pro4}}

Let $Y_{t}$ be the strong solution to (\ref{m1'}) and $\bar{Y}%
_{t}^{n}=Y_{t}^{n}-Y_{t},t\in \left[ 0,1\right] $. Let $0\leq S\leq T\leq 1.$
Then%
\begin{eqnarray*}
\bar{Y}_{t}^{n} &=&\bar{Y}_{S}^{n}+\int_{S}^{t}[b\left( Y_{\pi _{n}\left(
r\right) }^{n}\right) -b\left( Y_{r}^{n}\right) ]dr+\int_{S}^{t}[b\left(
Y_{r}^{n}\right) -b\left( Y_{r}\right) ]dr \\
&&+\int_{S}^{t}[G\left( Y_{\pi _{n}\left( r\right) }^{n}\right) -G\left(
Y_{r-}^{n}\right) ]dL_{r}^{0}+\int_{S}^{t}[G\left( Y_{r-}^{n}\right)
-G\left( Y_{r-}\right) ]dL_{r}^{0} \\
&:=&\bar{Y}_{S}^{n}+B_{t}^{1}+B_{t}^{2}+B_{t}^{3}+B_{t}^{4},t\in \left[ S,T%
\right] .
\end{eqnarray*}

\emph{Estimates for \thinspace }$p\in \left( 0,\alpha \right) $ of $%
B^{k},k=1,\ldots ,4,$ are identical to estimates of $A^{k},k=1,\ldots ,4$,
and the conclusion (\ref{for3}) holds for $p\in \left( 0,\alpha \right) $
with $\bar{X}^{n}$ replaced by $\bar{Y}^{n}$.

\emph{Estimates of }$B_t^{1}$\emph{\ and}{\footnotesize \ }$B_t^{2}$\emph{\
for \thinspace }$p\in \lbrack \alpha ,\infty )$. By H\"{o}lder inequality
and Corollary \ref{co2},%
\begin{eqnarray*}
\mathbf{E}[\sup_{S\leq t\leq T}\left\vert B_{t}^{1}\right\vert ^{\alpha}]
&\leq &C\mathbf{E}\int_{S}^{T}\left\vert Y_{\pi _{n}\left( r\right)
}^{n}-Y_{r}^{n}\right\vert ^{\alpha}dr\leq C\left( n/\ln n\right) ^{-1} , \\
\mathbf{E[}\sup_{S\leq t\leq T}\left\vert B_{t}^{1}\right\vert ^{p}] &\leq &C%
\mathbf{E}\int_{S}^{T}\left\vert Y_{\pi _{n}\left( r\right)
}^{n}-Y_{r}^{n}\right\vert ^{p}dr\leq Cn^{-1}\text{ if }p>\alpha .
\end{eqnarray*}%
By H\"{o}lder inequality, for $p\in \lbrack \alpha ,\infty )$ there is $%
C=C\left( K\right) $ such that%
\begin{equation*}
\mathbf{E[}\sup_{S\leq t\leq T}\left\vert B_{t}^{2}\right\vert ^{p}]\leq C%
\mathbf{E}\int_{S}^{T}\left\vert \bar{Y}_{r}^{n}\right\vert ^{p}dr\leq
C\left( T-S\right) \mathbf{E[}\sup_{S\leq t\leq T}\left\vert \bar{Y}%
_{t}^{n}\right\vert ^{p}].
\end{equation*}

\emph{Estimate of }$B_t^{3}$ for $p\in \lbrack \alpha ,\infty )$. By
definition,%
\begin{equation*}
B_{t}^{3}=\int_{S}^{t}\int_{\left\vert y\right\vert \leq 1}\left[ G\left(
Y_{\pi _{n}\left( r\right) }^{n}\right) -G\left( Y_{r-}^{n}\right) \right]
yq\left( dr,dy\right) ,t\in \left[ S,T\right] .
\end{equation*}%
By Corollary \ref{co2}, there is $C=C\left( \alpha,d ,K\right) $ so that 
\begin{equation*}
R:=\mathbf{E}\int_{S}^{T}\left\vert Y_{\pi _{n}\left( r\right)
}^{n}-Y_{r}^{n}\right\vert ^{\alpha }dr\leq C\left( n/\ln n\right) ^{-1}.
\end{equation*}%
Applying Lemma \ref{ele11}(ii) with $\bar{F}_{r}=\left\vert G\left( Y_{\pi
_{n}\left( r\right) }^{n}\right) -G\left( Y_{r-}^{n}\right) \right\vert
,r\in \left[ 0,1\right] $, we can claim there is $C=C\left( \alpha
,d,K\right) $ such that%
\begin{eqnarray*}
\mathbf{E}\left[ \sup_{S\leq t\leq T}\left\vert B_{t}^{3}\right\vert ^{p}%
\right] &\leq &CR\left( 1+\ln R\right) \leq C\left( n/\ln n\right)
^{-1}[1+\ln \left( n/\ln n\right) ] \\
&\leq &C\left[ n/\left( \ln n\right) ^{2}\right] ^{-1}.
\end{eqnarray*}%
Now, for $p>\alpha ,$ by Lemma \ref{ele1} and Corollary \ref{co2}, there is $%
C=C\left( d,p,\alpha ,K\right) $ such that%
\begin{equation*}
\mathbf{E}\left[ \sup_{S\leq t\leq T}\left\vert B_{t}^{3}\right\vert ^{p}%
\right] \leq C\mathbf{E}\int_{S}^{T}\left\vert \bar{Y}_{r}^{n}\right\vert
^{p}dr\leq Cn^{-1}.
\end{equation*}

\emph{Estimate of }$B_t^{4}$ for $p\in \lbrack \alpha ,\infty )$. By
definition,%
\begin{equation*}
B_{t}^{4}=\int_{S}^{t}\int_{\left\vert y\right\vert \leq 1}\left[ G\left(
Y_{r-}^{n}\right) -G\left( Y_{r-}\right) \right] yq\left( dr,dy\right) ,t\in %
\left[ S,T\right] .
\end{equation*}%
By Lemma \ref{ele5}(i) (Kunita's inequality) and Remark \ref{re1}, there is $%
C=C\left( \alpha,d ,p,K\right) $ such that 
\begin{eqnarray*}
\mathbf{E}\left[ \sup_{S\leq t\leq T}\left\vert B_{t}^{4}\right\vert ^{p}%
\right] &\leq &C\mathbf{E}\left[ \left( \int_{S}^{T}\left\vert \bar{Y}%
_{r}^{n}\right\vert ^{2}dr\right) ^{p/2}+\int_{S}^{T}\left\vert \bar{Y}%
_{r}^{n}\right\vert ^{p}dr\right] \\
&\leq &C\left[ \left( T-S\right) +(T-S)^{p/2}\right] \mathbf{E}\left[
\sup_{S\leq t\leq T}\left\vert \bar{Y}_{t}^{n}\right\vert ^{p}\right] .
\end{eqnarray*}

Summarizing, there is $C=C\left( \alpha,d ,p,K\right) $ so that for any $%
S\leq t\leq T\leq 1,$%
\begin{equation*}
\mathbf{E}\left[ \sup_{S\leq t\leq T}\left\vert \bar{Y}_{t}^{n}\right\vert
^{\alpha }\right] \leq C\left\{ \left[ n/\left( \ln n\right) ^{2}\right]
^{-1}+\left( T-S\right) ^{\alpha /2}\mathbf{E}\left[ \sup_{S\leq t\leq
T}\left\vert \bar{Y}_{t}^{n}\right\vert ^{\alpha }\right] \right\} ,
\end{equation*}%
and for all $p>\alpha $ 
\begin{equation*}
\mathbf{E}\left[ \sup_{S\leq t\leq T}\left\vert \bar{X}_{t}^{n}\right\vert
^{p}\right] \leq C\left\{ n^{-1}+\left( T-S\right) ^{\alpha /2}\mathbf{E}%
\left[ \sup_{S\leq t\leq T}\left\vert \bar{Y}_{t}^{n}\right\vert ^{p}\right]
\right\} .
\end{equation*}

We finish the proof by taking $C\left( T-S\right) \leq 1/2$ and applying
Lemma \ref{gle}.

\subsection{Proof of Proposition \protect\ref{pro2}}

First we prove that the Euler approximation sequence is a Cauchy sequence.

\begin{lemma}
\label{lem6}Let $\alpha \in \lbrack 1,2)$, $\beta \in (0,1)$, $\beta
>1-\alpha /2$, $p\in \left( 0,\alpha \right) $ and \textbf{S}$\left(
c_{0}\right) ,$\textbf{A}$(K,c_{0})$ hold. Assume, without loss of
generality, $\left\vert \rho \right\vert _{\beta }\leq K$, $\left\vert
b\right\vert _{\beta }\leq K$ for the same $K$. Then there are constants $%
C_{1}=C_{1}\left( \alpha ,\beta ,d,K,c_{0},p\right) ,c_{1}=c_{1}\left(
\alpha ,\beta ,d,K,c_{0},p\right) $ such that for any $0\leq S\leq T\leq 1$
with $T-S\leq c_{1}$ we have%
\begin{equation*}
\mathbf{E}\left[ \sup_{S\leq t\leq T}\left\vert
X_{t}^{n}-X_{t}^{m}\right\vert ^{p}\right] \leq C_{1}\left( \mathbf{E}\left[
\left\vert X_{S}^{n}-X_{S}^{m}\right\vert ^{p}\right] +n^{-p\beta /\alpha
}+m^{-p\beta /\alpha }\right) .
\end{equation*}

Moreover, if $X_{t}$ is a strong solution to (\ref{m1}), then \ \ \ \ \ \ \
\ \ \ \ \ \ \ \ \ \ \ \ \ \ \ \ \ \ \ \ \ \ \ \ \ \ \ \ \ \ \ \ \ \ \ \ \ \
\ \ \ \ \ \ \ \ \ \ \ \ \ \ \ \ \ \ \ \ \ \ \ \ \ \ \ \ \ \ \ 
\begin{equation*}
\mathbf{E}\left[ \sup_{S\leq t\leq T}\left\vert X_{t}^{n}-X_{t}\right\vert
^{p}\right] \leq C_{1}\left( \mathbf{E}\left[ \left\vert
X_{S}^{n}-X_{S}\right\vert ^{p}\right] +n^{-p\beta /\alpha }\right) .
\end{equation*}
\end{lemma}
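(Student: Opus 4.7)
My plan is to execute the Itô--Tanaka trick, in the spirit of \cite{p} and \cite{ta}, but adapted to compare two Euler schemes directly. First, I would invoke Corollary \ref{lem5} with source $f = -b$ and drift $\tilde{b}(x) = b(x) - \mathbf{1}_{\alpha\in(1,2)} G(x) \int_{|y|>1} y\rho(y)|y|^{-d-\alpha}dy$ to produce $v \in C^{\alpha+\beta}(H_T)$ satisfying $\partial_t v + \tilde{b}\cdot\nabla v + Lv = -b$ with $v(T,\cdot)\equiv 0$ and $|v|_{\alpha+\beta} \le C|b|_\beta$. Since $\beta > 1-\alpha/2$ gives $\alpha/2+\beta > 1$, the time-regularity bound $|v(s,\cdot)|_{\alpha/2+\beta} \le C(T-s)^{1/2}|b|_\beta$ lets me fix $c_1$ so that $|\nabla v(s,\cdot)|_0 \le 1/2$ on $[S,T]$; consequently $\Phi(s,x) := x + v(s,x)$ is bilipschitz in $x$ with constants between $1/2$ and $3/2$.

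The core step is to apply Itô's formula for jump semimartingales to $v(t,X^n_t)$, substitute the PDE for $\partial_s v$, and rearrange. This produces an identity of the form
\begin{equation*}
\Phi(t,X^n_t) = \Phi(S,X^n_S) + \int_S^t[b(X^n_{\pi_n(s)}) - b(X^n_s)]\,ds + \int_S^t G(X^n_{\pi_n(s)})\,dL_s + E^n_t + M^n_t,
\end{equation*}
where $M^n$ is the compensated-jump martingale coming from $dv$, and $E^n$ collects the Euler-discretization residuals: the drift perturbation $\int\nabla v\cdot[\tilde b(X^n_{\pi_n(s)}) - \tilde b(X^n_s)]\,ds$ together with the jump-operator mismatch $\int(L^n - L)v(s,X^n_s)\,ds$, where $L^n$ uses $G(X^n_{\pi_n(s)})$ in place of $G(X^n_s)$ inside the Lévy integral. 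Writing the same identity for $X^m$ and subtracting, bilipschitzness of $\Phi$ gives
\begin{equation*}
|X^n_t - X^m_t| \le 3|X^n_S - X^m_S| + 2|J^n_t - J^m_t| + 2|E^n_t - E^m_t| + 2|M^n_t - M^m_t|,
\end{equation*}
where $J^n_t$ combines the Hölder-drift Euler error with the stochastic integral $\int G(X^n_{\pi_n(s)})dL_s$.

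Next I would take $p$-th moments. The residuals $|J^n|^p$ and $|E^n|^p$ are bounded via the $\beta$-Hölder regularity of $b$ and of $\nabla v$ combined with Corollary \ref{co1}, each producing $O(n^{-p\beta/\alpha})$. The martingale $M^n - M^m$ has integrand $[v(s,X^n_{s-} + G(X^n_{\pi_n(s)})y) - v(s,X^n_{s-})] - [v(s,X^m_{s-} + G(X^m_{\pi_m(s)})y) - v(s,X^m_{s-})]$; adding and subtracting $v(s,X^m_{s-} + G(X^n_{\pi_n(s)})y)$ splits this into a second-difference piece controlled by $C|y|^{\alpha+\beta-1}|X^n-X^m|$ (using the $(\alpha+\beta-1)$-Hölder regularity of $\nabla v$) and a frozen-coefficient piece of size $C|y|\cdot|X^n_{\pi_n(s)} - X^m_{\pi_m(s)}|$; by Lemmas \ref{ele1}--\ref{ele2} these contribute $C(T-S)^{1/2}\mathbf{E}[\sup_{[S,T]}|X^n-X^m|^p] + O(n^{-p\beta/\alpha}+m^{-p\beta/\alpha})$. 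Shrinking $c_1$ further to absorb the self-term yields the first bound; the second is identical with $X^m$ replaced by the strong solution $X$, in which case the $X$-side produces no Euler error.

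The main obstacle is ensuring that the second-difference martingale integrand, of size $|y|^{\alpha+\beta-1}$ near $y=0$, remains square-integrable against $\rho(y)|y|^{-d-\alpha}dy$: a polar computation shows this requires $\int_0^1 r^{\alpha+2\beta-3}dr < \infty$, equivalent to $\alpha + 2\beta > 2$, which is precisely the hypothesis $\beta > 1-\alpha/2$. This is the sharp threshold where the lower bound on $\beta$ is used (beyond the PDE solvability in Corollary \ref{lem5}), and it explains why the restriction $\beta > 1-\alpha/2$ cannot be relaxed within this method. The remainder of the work is systematic bookkeeping: each mesh-width appearance of $|X^n_{\pi_n(s)} - X^n_s|^{\gamma}$ is fed through the appropriate estimate from Corollary \ref{co1} and one of the stochastic-integral lemmas, producing the advertised $n^{-p\beta/\alpha}$ rate.
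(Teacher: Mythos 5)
Your proposal is correct and follows the same It\^o--Tanaka strategy that the paper uses: solve the backward Kolmogorov equation via Corollary \ref{lem5}, exploit the time-regularity estimate $|v(t,\cdot)|_{\alpha/2+\beta}\le C(T-t)^{1/2}$ together with $\alpha/2+\beta>1$ to make $|\nabla v|$ small on a short interval, decompose $X^n_t-X^m_t$ into a discretization residual, a second-difference martingale, and a self-term absorbed by shrinking $T-S$, and observe that $\alpha+2\beta>2$ is precisely what makes the second-difference integrand (of size $|y|^{\alpha+\beta-1}$) square integrable against $|y|^{-d-\alpha}dy$. The only cosmetic differences from the paper are that you take source $f=-b$ and work explicitly with the bilipschitz map $\Phi(s,x)=x+v(s,x)$, while the paper takes $f=\tilde b$ and uses the small-gradient bound directly on $u$, and that the paper pairs the raw stochastic integral $\int G(X^n_{\pi_n})\,dL$ with its $u$-correction into terms $D^{n,4}$--$D^{n,7}$ rather than keeping it separate; these lead to the same estimates via Remark \ref{re1} and Lemmas \ref{ele1}--\ref{ele2}.
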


\begin{proof}
By Corollary \ref{lem5}, for each $k=1,\ldots ,d$, there exists a unique
solution $u^{k}\left( t,x\right) $ to (\ref{m3}) with%
\begin{equation*}
\tilde{b}\left( x\right) =b\left( x\right) -1_{\alpha \in \left( 1,2\right)
}G\left( x\right) \int_{\left\vert y\right\vert >1}y\rho \left( y\right) 
\frac{dy}{\left\vert y\right\vert ^{d+\alpha }},x\in \mathbf{R}^{d}.
\end{equation*}%
Note that $\tilde{b}$ is also a bounded $\beta $-H\"{o}lder continuous
function. Denote $u=\left( u^{k}\right) _{1\leq k\leq d}$. By It\^{o}
formula and definition of Euler approximation (\ref{m1'}), for $t\in \left[
S,T\right] $, using (\ref{m3}), 
\begin{eqnarray}
&&u^{k}\left( t,X_{t}^{n}\right) -u^{k}\left( S,X_{S}^{n}\right)  \notag \\
&=&\int_{S}^{t}\tilde{b}^{k}\left( X_{r}^{n}\right) dr+\int_{S}^{t}\left[ 
\tilde{b}^{k}\left( X_{\pi _{n}\left( r\right) }^{n}\right) -\tilde{b}%
^{k}\left( X_{r}^{n}\right) \right] \cdot \nabla u^{k}\left(
r,X_{r}^{n}\right) dr  \notag \\
&+&\int_{S}^{t}\int_{\left\vert y\right\vert \leq 1}\left[ u^{k}\left(
r,X_{r-}^{n}+G\left( X_{\pi _{n}\left( r\right) }^{n}\right) y\right)
-u^{k}\left( r,X_{r-}^{n}\right) \right] q\left( dr,dy\right)  \notag \\
&+&\int_{S}^{t}\int_{\left\vert y\right\vert >1}\left[ u^{k}\left(
r,X_{r-}^{n}+G\left( X_{\pi _{n}\left( r\right) }^{n}\right) y\right)
-u^{k}\left( r,X_{r-}^{n}\right) \right] N\left( dr,dy\right)  \notag \\
&+&\int_{S}^{t}\int_{\left\vert y\right\vert \leq 1}\{u^{k}\left(
r,X_{r}^{n}+G\left( X_{\pi _{n}\left( r\right) }^{n}\right) y\right)
-u^{k}\left( r,X_{r}^{n}+G\left( X_{r}^{n}\right) y\right)  \notag \\
&&\quad -\nabla u^{k}\left( r,X_{r}^{n}\right) \cdot \left[ G\left( X_{\pi
_{n}\left( r\right) }^{n}\right) -G\left( X_{r-}^{n}\right) \right] y\}\rho
\left( y\right) \frac{dydr}{\left\vert y\right\vert ^{d+\alpha }}.  \notag
\end{eqnarray}%
On the other hand, according to \eqref{m2}, for $t\in \left[ S,T\right] $ 
\begin{eqnarray*}
&&X_{t}^{n}-X_{S}^{n} \\
&=&\int_{S}^{t}\tilde{b}\left( X_{r}^{n}\right) dr+\int_{S}^{t}\left[ \tilde{%
b}\left( X_{\pi _{n}\left( r\right) }^{n}\right) -\tilde{b}\left(
X_{r}^{n}\right) \right] dr \\
&+&\int_{S}^{t}\int_{\left\vert y\right\vert >1}G\left( X_{\pi _{n}\left(
r\right) }^{n}\right) yN\left( dr,dy\right) +\int_{S}^{t}\int_{\left\vert
y\right\vert \leq 1}G\left( X_{\pi _{n}\left( r\right) }^{n}\right) yq\left(
dr,dy\right) .
\end{eqnarray*}%
It follows from the two identities above that%
\begin{equation*}
X_{t}^{n}=\sum_{k=1}^{7}D_{t}^{n,k},
\end{equation*}%
where%
\begin{eqnarray*}
D_{t}^{n,1} &=&X_{S}^{n}+\big[u\left( t,X_{t}^{n}\right) -u\left(
S,X_{S}^{n}\right) \big], \\
D_{t}^{n,2} &=&\int_{S}^{t}\left[ \tilde{b}\left( X_{\pi _{n}\left( r\right)
}^{n}\right) -\tilde{b}\left( X_{r}^{n}\right) \right] \big(I_{d}-\nabla
u\left( r,X_{r}^{n}\right) \big)dr, \\
D_{t}^{n,3} &=&\int_{S}^{t}\int_{\left\vert y\right\vert \leq 1}\{u\left(
r,X_{r}^{n}+G\left( X_{r}^{n}\right) y\right) -u\left( r,X_{r}^{n}+G\left(
X_{\pi _{n}\left( r\right) }^{n}\right) y\right) \\
&&\quad -\nabla u\left( r,X_{r}^{n}\right) \cdot \left[ G\left(
X_{r-}^{n}\right) -G\left( X_{\pi _{n}\left( r\right) }^{n}\right) \right]
y\}\rho \left( y\right) \frac{dydr}{\left\vert y\right\vert ^{d+\alpha }}, \\
D_{t}^{n,4} &=&\int_{S}^{t}\int_{\left\vert y\right\vert \leq 1}\Big\{%
\lbrack u\left( r,X_{r-}^{n}+G\left( X_{r-}^{n}\right) y\right) -u\left(
r,X_{r-}^{n}+G\left( X_{\pi _{n}\left( r\right) }^{n}\right) y\right) ] \\
&&\quad +[G\left( X_{\pi _{n}\left( r\right) }^{n}\right) -G\left(
X_{r-}^{n}\right) ]y\Big\}q\left( dr,dy\right) , \\
D_{t}^{n,5} &=&\int_{S}^{t}\int_{\left\vert y\right\vert >1}[u\left(
r,X_{r-}^{n}+G\left( X_{r-}^{n}\right) y\right) -u\left(
r,X_{r-}^{n}+G\left( X_{\pi _{n}\left( r\right) }^{n}\right) y\right) ] \\
&&\quad +[G\left( X_{\pi _{n}\left( r\right) }^{n}\right) -G\left(
X_{r-}^{n}\right) ]y\Big\}N\left( dr,dy\right) , \\
D_{t}^{n,6} &=&\int_{S}^{t}\int_{\left\vert y\right\vert \leq 1}\Big\{%
G\left( X_{r-}^{n}\right) y-\left[ u\left( r,X_{r-}^{n}+G\left(
X_{r-}^{n}\right) y\right) -u\left( r,X_{r-}^{n}\right) \right] \Big\}%
q\left( dr,dy\right) , \\
D_{t}^{n,7} &=&\int_{S}^{t}\int_{\left\vert y\right\vert >1}\Big\{G\left(
X_{r-}^{n}\right) y-\left[ u\left( r,X_{r-}^{n}+G\left( X_{r-}^{n}\right)
y\right) -u\left( r,X_{r-}^{n}\right) \right] \Big\}N\left( dr,dy\right) .
\end{eqnarray*}

Let $D_t^{n,m;k}=D_t^{n,k}-D_t^{m,k},$ and $X_t^{n,m}=X_t^{n}-X_t^{m},n,m%
\geq 1,k=1,\ldots ,7$.

\emph{Estimate of }$D_{t}^{n,m;1}.$ Using the terminal condition of (\ref{m3}%
) and Corollary \ref{lem5}, we see that for $p\in \left( 0,\infty \right) $
there is a constant $C=C\left( \alpha ,\beta ,d,K,p,c_{0}\right) $ such that 
\begin{eqnarray*}
\left\vert D_{t}^{n,m;1}\right\vert ^{p} &\leq &C\{\left\vert
X_{S}^{m}-X_{S}^{n}\right\vert ^{p}+\left\vert u\left( t,X_{t}^{n}\right)
-u\left( t,X_{t}^{m}\right) +u\left( T,X_{t}^{m}\right) -u\left(
T,X_{t}^{n}\right) \right\vert ^{p} \\
&&+\left\vert u\left( T,X_{S}^{n}\right) -u\left( T,X_{S}^{m}\right)
+u\left( S,X_{S}^{m}\right) -u\left( S,X_{S}^{n}\right) \right\vert ^{p}\} \\
&\leq &C\{\left[ \left\vert X_{S}^{n}-X_{S}^{m}\right\vert ^{p}\right]
+\left( T-t\right) ^{p/2}\left\vert X_{t}^{n}-X_{t}^{m}\right\vert ^{p}\},
\end{eqnarray*}%
therefore, 
\begin{equation*}
\mathbf{E}\left[ \sup_{S\leq t\leq T}\left\vert D_{t}^{n,m;1}\right\vert ^{p}%
\right] \leq C\{\left( T-S\right) ^{p/2}\mathbf{E}\left[ \sup_{S\leq t\leq
T}\left\vert X_{t}^{n,m}\right\vert ^{p}\right] +\mathbf{E}\left\vert
X_{S}^{n,m}\right\vert ^{p}\}.
\end{equation*}

\emph{Estimate of }$D_{t}^{n,m;2}$. Obviously, $\left\vert
D_{t}^{n,m;2}\right\vert ^{p}\leq 2^{p}[\left\vert D_{t}^{n,2}\right\vert
^{p}+\left\vert D_{t}^{m,2}\right\vert ^{p}],t\in \left[ 0,1\right) .$

For $p\in \lbrack 1,\alpha )$, $\alpha \in \left( 1,2\right) ,$ by H\"{o}%
lder inequality and Corollary \ref{co1}, there is $C=C\left( \alpha ,\beta
,d,c_{0},K,p\right) $ such that%
\begin{equation*}
\mathbf{E}\left[ \sup_{S\leq t\leq T}\left\vert D_{t}^{n,2}\right\vert ^{p}%
\right] \leq C\mathbf{E}\left[ \int_{S}^{T}\left\vert X_{\pi _{n}\left(
r\right) }^{n}-X_{r}^{n}\right\vert ^{p\beta }dr\right] \leq Cn^{-\beta
p/\alpha }.
\end{equation*}

For $p\in \left( 0,1\right) $, by Corollary \ref{co1}, there is a constant $%
C=C\left( \alpha ,\beta ,d,K,p\right) $ such that 
\begin{eqnarray*}
\mathbf{E}\left[ \sup_{S\leq t\leq T}\left\vert D_{t}^{n,2}\right\vert ^{p}%
\right] &\leq &C\mathbf{E}\left[ \left( \int_{S}^{T}\left\vert X_{\pi
_{n}\left( r\right) }^{n}-X_{r}^{n}\right\vert ^{\beta }dr\right) ^{p}\right]
\\
&\leq &C\left( \int_{S}^{T}\mathbf{E}[\left\vert X_{\pi _{n}\left( r\right)
}^{n}-X_{r}^{n}\right\vert ^{\beta }]dr\right) ^{p}\leq Cn^{-\beta p/\alpha
}.
\end{eqnarray*}

Similarly, we can obtain the estimates for $D_t^{m,2}$. Hence, by H\"{o}lder
inequality, for all ~$p\in (0,\alpha)$, there is $C=C\left( \alpha ,\beta
,d,K,p\right) $ such that 
\begin{equation*}
\mathbf{E}\left[ \sup_{S\leq t\leq T}\left\vert D_{t}^{n,m;2}\right\vert ^{p}%
\right] \leq C[n^{-p\beta /\alpha }+m^{-p\beta /\alpha }].
\end{equation*}

\emph{Estimate of }$D_{t}^{n,m;3}$. Obviously, $\left\vert
D_{t}^{n,m;3}\right\vert ^{p}\leq 2^{p}[\left\vert D_{t}^{n,3}\right\vert
^{p}+\left\vert D_{t}^{m,3}\right\vert ^{p}],t\in \left[ 0,1\right) $. Note
that 
\begin{eqnarray*}
\left\vert D_{t}^{n,3}\right\vert &=&\Big\vert\int_{S}^{t}\int_{\left\vert
y\right\vert \leq 1}\Big\{\lbrack \int_{0}^{1}[\nabla u\left(
r,X_{r}^{n}\right) -\nabla u\Big(r,X_{r}^{n}+G\left( X_{r}^{n}\right) y+s%
\Big\lbrack G\left( X_{\pi _{n}\left( r\right) }^{n}\right) \\
&&-G\left( X_{r}^{n}\right) \Big\rbrack y\Big)]ds\cdot \left[ G\left( X_{\pi
_{n}\left( r\right) }^{n}\right) -G\left( X_{r-}^{n}\right) \right] y\Big\}%
\rho \left( y\right) \frac{dydr}{\left\vert y\right\vert ^{d+\alpha }}%
\Big\vert.
\end{eqnarray*}

Let $\beta ^{\prime }\in \left( 0,1\right) ,\alpha +\beta >1+\beta ^{\prime
}>\alpha $ and denote $G_{r}^{n}=\left\vert G\left( X_{\pi _{n}\left(
r\right) }^{n}\right) -G\left( X_{r-}^{n}\right) \right\vert ,r\in \left[
0,1\right) $. Then there is $C=C\left( \alpha ,d,K,c_{0},\beta \right) $
such that%
\begin{eqnarray*}
\left\vert D_{t}^{n,3}\right\vert &\leq &C\int_{S}^{T}\int_{\left\vert
y\right\vert \leq 1}G_{r}^{n}\left\vert y\right\vert ^{1+\beta ^{\prime }}%
\frac{dy}{\left\vert y\right\vert ^{d+\alpha }}dr \\
&\leq &C\int_{S}^{T}\left\vert X_{r}^{n}-X_{\pi _{n}\left( r\right)
}^{n}\right\vert \wedge 1dr,\quad t\in \left[ S,T\right] .
\end{eqnarray*}%
Hence by Corollary \ref{co1}, for $p\in \left[ 1,\alpha \right) ,\alpha \in
\left( 1,2\right) $, 
\begin{equation*}
\mathbf{E}\left[ \sup_{S\leq t\leq T}\left\vert D_{t}^{n,3}\right\vert ^{p}%
\right] \leq C\int_{S}^{T}\mathbf{E}\left[ \left\vert X_{r}^{n}-X_{\pi
_{n}\left( r\right) }^{n}\right\vert ^{p}\right] dr\leq Cn^{-p/\alpha };
\end{equation*}%
for $p\in \left( 0,1\right) ,$ according to Corollary \ref{co1}, 
\begin{equation*}
\mathbf{E}\left[ \sup_{S\leq t\leq T}\left\vert D_{t}^{n,3}\right\vert ^{p}%
\right] \leq C\left( \int_{S}^{T}\mathbf{E}\left[ \left\vert
X_{r}^{n}-X_{\pi _{n}\left( r\right) }^{n}\right\vert \wedge 1\right]
dr\right) ^{p}\leq C\left( n/\ln n\right) ^{-p/\alpha }.
\end{equation*}

Similar reasoning can be applied to $\left\vert D_{t}^{m,3}\right\vert $.
Therefore for all $p\in \left( 0,\alpha \right) $ there is $C=C\left(
\alpha,d ,p,K,\beta\right) $ such that 
\begin{equation*}
\mathbf{E}\left[ \sup_{S\leq t\leq T}\left\vert D_{t}^{n,m;3}\right\vert ^{p}%
\right] \leq C\left[ \left( n/\ln n\right) ^{-p/\alpha }+\left( m/\ln
m\right) ^{-p/\alpha }\right] .
\end{equation*}

\emph{Estimate of }$D_{t}^{n,m;4}$. Obviously, $\left\vert
D_{t}^{n,m;4}\right\vert ^{p}\leq 2^{p}[\left\vert D_{t}^{n,4}\right\vert
^{p}+\left\vert D_{t}^{m,4}\right\vert ^{p}],t\in \left[ 0,1\right) $. By
Corollary \ref{co1}(iii), there is $C=C\left( \alpha,d ,K\right) $ such that%
\begin{equation*}
R:=\mathbf{E}\int_{S}^{T}\left\vert X_{\pi _{n}\left( r\right)
}^{n}-X_{r-}^{n}\right\vert ^{\alpha }\wedge 1dr\leq C\left( n/\ln n\right)
^{-1}.
\end{equation*}

First, let $\alpha \in \left( 1,2\right) $. Applying Lemma \ref{ele11}(i) to 
$D_{t}^{n,4}$ with 
\begin{equation}
\bar{F}_{r}=2K\left( 1+|\nabla u|_{0}\right) \left( \left\vert X_{\pi
_{n}\left( r\right) }^{n}-X_{r-}^{n}\right\vert \wedge 1\right) ,r\in \left[
S,T\right] ,  \label{for4}
\end{equation}%
and Corollary \ref{co1}(iii), we have that for $p\in \left( 0,\alpha \right) 
$ there is $C=C\left( \alpha ,d,p,K,\beta ,c_{0}\right) $ such that 
\begin{equation*}
\mathbf{E}\left[ \sup_{S\leq t\leq T}\left\vert D_{t}^{n,4}\right\vert ^{p}%
\right] \leq CR^{p/\alpha }\leq C\left( n/\ln n\right) ^{-p/\alpha }.
\end{equation*}

Now, let $\alpha =1$. Applying Lemma \ref{ele12}(i) to $D_t^{n,4}$ with $%
\bar{ F}_{r}$ given by (\ref{for4}), and Corollary \ref{co1}(iii), we see
there is $C=C\left(\alpha,d, p,K,\beta\right) $ such that%
\begin{equation*}
\mathbf{E}\left[ \sup_{S\leq t\leq T}\left\vert D_{t}^{n,4}\right\vert ^{p}%
\right] \leq CR^{p}\left( 1+\left\vert\ln R\right\vert\right) ^{p}\leq C%
\left[ n/\left( \ln n\right) ^{2}\right] ^{-p}.
\end{equation*}

Similarly, 
\begin{equation*}
\mathbf{E}\left[ \sup_{S\leq t\leq T}\left\vert D_{t}^{m,4}\right\vert ^{p}%
\right] \leq C\left[ m/\left( \ln m\right) ^{2}\right] ^{-p/\alpha },
\end{equation*}%
and thus there is $C=C\left( \alpha ,d,p,K,\beta ,c_{0}\right) $ so that%
\begin{equation*}
\mathbf{E}\left[ \sup_{S\leq t\leq T}\left\vert D_{t}^{n,m;4}\right\vert ^{p}%
\right] \leq C\left\{ \left[ n/\left( \ln n\right) ^{2}\right] ^{-p/\alpha }+%
\left[ m/\left( \ln m\right) ^{2}\right] ^{-p/\alpha }\right\} .
\end{equation*}

\emph{Estimate of }$D_{t}^{n,m;5}$. Obviously, $\left\vert
D_{t}^{n,m;5}\right\vert ^{p}\leq 2^{p}[\left\vert D_{t}^{n,5}\right\vert
^{p}+\left\vert D_{t}^{m,5}\right\vert ^{p}],t\in \left[ 0,1\right) $. By
Lemma \ref{ele2}, applied to $D_{t}^{n,5}$ with 
\begin{equation*}
\bar{F}_{r}=\left( 1+|\nabla u|_{0}\right) \left\vert \nabla G\right\vert
_{\infty }\left\vert X_{\pi _{n}\left( r\right) }^{n}-X_{r-}^{n}\right\vert
,r\in \left[ S,T\right] ,
\end{equation*}%
and Corollary \ref{co1}, there is $C=C\left( \alpha ,d,p,K,\beta
,c_{0}\right) $ such that 
\begin{equation*}
\mathbf{E}\left[ \sup_{S\leq t\leq T}\left\vert D_{t}^{n,5}\right\vert ^{p}%
\right] \leq Cn^{-p/\alpha }\text{ for }p\in \left( 0,\alpha \right) .
\end{equation*}

Similarly as above, for $p\in\left( 0,\alpha\right)$, 
\begin{equation*}
\mathbf{E}\left[ \sup_{S\leq t\leq T}\left\vert D_{t}^{n,m;5}\right\vert ^{p}%
\right] \leq C\left[ n^{-p/\alpha }+m^{-p/\alpha }\right] .
\end{equation*}

\emph{Estimate of }$D_{t}^{n,m;6}$. Denote $G_{r}^{n,m}=G\left(
X_{r}^{n}\right) -G\left( X_{r}^{m}\right) ,r\in \left[ S,T\right] $. Then 
\begin{eqnarray*}
D_{t}^{n,m;6} &=&\int_{S}^{t}\int_{\left\vert y\right\vert \leq 1}\Big\{%
\lbrack G\left( X_{r-}^{n}\right) -G\left( X_{r-}^{m}\right) ]y \\
&&\quad -\left[ u\left( r,X_{r-}^{n}+G\left( X_{r-}^{n}\right) y\right)
-u\left( r,X_{r-}^{n}+G\left( X_{r-}^{m}\right) y\right) \right] \Big\}%
q\left( dr,dy\right) \\
&-&\int_{S}^{t}\int_{\left\vert y\right\vert \leq 1}\{[u\left(
r,X_{r-}^{n}+G\left( X_{r-}^{m}\right) y\right) -u\left(
r,X_{r-}^{m}+G\left( X_{r-}^{m}\right) y\right) ] \\
&&\quad +\left[ u\left( r,X_{r}^{m}\right) -u\left( r,X_{r}^{n}\right) %
\right] \}q\left( dr,dy\right) \\
&:=&D_{t}^{n,m;61}+D_{t}^{n,m;62}.
\end{eqnarray*}%
For $p\in \left( 0,2\right) $, by Remark \ref{re1}, there is $C=C\left(
\alpha ,\beta ,d,K,c_{0},p\right) $ such that%
\begin{eqnarray*}
\mathbf{E}\left[ \sup_{S\leq t\leq T}\left\vert D_{t}^{n,m;61}\right\vert
^{p}\right] &\leq &C\mathbf{E}\left[ \left( \int_{S}^{T}\int_{\left\vert
y\right\vert \leq 1}\left\vert G_{r}^{n,m}y\right\vert ^{2}\frac{dydr}{%
\left\vert y\right\vert ^{d+\alpha }}\right) ^{p/2}\right] \\
&\leq &C\left( T-S\right) ^{p/2}\mathbf{E}\left[ \sup_{S\leq r\leq
T}\left\vert X_{r}^{n,m}\right\vert ^{p}\right] .
\end{eqnarray*}

We rewrite 
\begin{eqnarray*}
D_{t}^{n,m;62} &=&\int_{S}^{t}\int_{\left\vert y\right\vert \leq
1}\int_{0}^{1}[-\nabla u\left( r,X_{r-}^{m}+G\left( X_{r-}^{m}\right)
y+sX_{r-}^{n,m}\right) \\
&&+\nabla u\left( r,X_{r}^{m}+sX_{r-}^{n,m}\right) ]X_{r-}^{n,m}dsq\left(
dr,dy\right) ,t\in \left[ S,T\right] .
\end{eqnarray*}%
Let $1+\beta ^{\prime }<\alpha +\beta $ and $2\beta ^{\prime }>\alpha $.
Then by Remark \ref{re1}, there is $C=C\left( \alpha ,\beta
,K,p,c_{0},d\right) $ such that 
\begin{eqnarray*}
\mathbf{E}\left[ \sup_{S\leq t\leq T}\left\vert D_{t}^{n,m;62}\right\vert
^{p}\right] &\leq &C\mathbf{E}\left[ \left( \int_{S}^{T}\left\vert
X_{r}^{n,m}\right\vert ^{2}dr\int_{\left\vert y\right\vert \leq 1}\left\vert
y\right\vert ^{2\beta ^{\prime }}\frac{dy}{\left\vert y\right\vert
^{d+\alpha }}\right) ^{p/2}\right] \\
&\leq &C\left( T-S\right) ^{p/2}\mathbf{E}\left[ \sup_{S\leq t\leq
T}\left\vert X_{r}^{n,m}\right\vert ^{p}\right] .
\end{eqnarray*}%
Hence, 
\begin{equation*}
\mathbf{E}\left[ \sup_{S\leq t\leq T}\left\vert D_{t}^{n,m;6}\right\vert ^{p}%
\right] \leq C\left( T-S\right) ^{p/2}\mathbf{E}\left[ \sup_{S\leq t\leq
T}\left\vert X_{r}^{n,m}\right\vert ^{p}\right] .
\end{equation*}

\emph{Estimate of }$D_{t}^{n,m;7}$. Let $\alpha \in \left( 1,2\right) ,p\in
\lbrack 1,\alpha )$. By Lemma \ref{ele5}(ii) (see Lemma 4.1 in \cite{lm}),
there is $C=C\left( \alpha ,\beta ,K,c_{0},p,d\right) $ such that 
\begin{eqnarray*}
\mathbf{E}\left[ \sup_{S\leq t\leq T}\left\vert D_{t}^{n,m;7}\right\vert ^{p}%
\right] &\leq &C\mathbf{E}\Big[\left( \int_{S}^{T}\int_{\left\vert
y\right\vert >1}[\left\vert G_{r}^{n,m}y\right\vert +\left\vert
X_{r}^{n,m}\right\vert ]\frac{dydr}{\left\vert y\right\vert ^{d+\alpha }}%
\right) ^{p} \\
&&+\int_{S}^{T}\int_{\left\vert y\right\vert >1}[\left\vert
G_{r}^{n,m}y\right\vert ^{p}+\left\vert X_{r}^{n,m}\right\vert ^{p}]\frac{%
dydr}{\left\vert y\right\vert ^{d+\alpha }}\Big] \\
&\leq &C\left( T-S\right) \mathbf{E}\left[ \sup_{S\leq t\leq T}\left\vert
X_{t}^{n,m}\right\vert ^{p}\right] .
\end{eqnarray*}

Let $\alpha =1,p\in \left( 0,1\right) $. By Remark \ref{re3}, there is $%
C=C\left( \alpha ,\beta ,K,p,d\right) $ such that%
\begin{equation*}
\left\vert D_{t}^{n,m;7}\right\vert ^{p}\leq C\int_{S}^{T}\int_{\left\vert
y\right\vert >1}[\left\vert G_{r-}^{n,m}y\right\vert +\left\vert
X_{r-}^{n,m}\right\vert ]^{p}N\left( dr,dy\right) ,t\in \left[ S,T\right] ,
\end{equation*}%
and thus 
\begin{eqnarray*}
\mathbf{E}\left[ \sup_{S\leq t\leq T}\left\vert D_{t}^{n,m;7}\right\vert ^{p}%
\right] &\leq &C\mathbf{E}\Big[\int_{S}^{T}\int_{\left\vert y\right\vert
>1}[\left\vert G_{r}^{n,m}y\right\vert ^{p}+\left\vert
X_{r}^{n,m}\right\vert ^{p}]\frac{dydr}{\left\vert y\right\vert ^{d+\alpha }}%
\Big] \\
&\leq &C\left( T-S\right) \mathbf{E}\left[ \sup_{S\leq r\leq T}\left\vert
X_{r}^{n,m}\right\vert ^{p}\right] .
\end{eqnarray*}

Collecting all the estimates above we see that for $p\in \left( 0,\alpha
\right) $ there is $C=C\left( \alpha ,\beta ,K,p,d\right) $ such that 
\begin{eqnarray}
\mathbf{E}\left[ \sup_{S\leq t\leq T}\left\vert X_{t}^{n,m}\right\vert ^{p}%
\right] &\leq &C\Big\{\left( T-S\right) ^{p/2}\mathbf{E}\left[ \sup_{S\leq
t\leq T}\left\vert X_{t}^{n,m}\right\vert ^{p}\right] +\mathbf{E}\left[
\left\vert X_{S}^{n,m}\right\vert ^{p}\right]  \notag \\
&&+n^{-p\beta /\alpha }+m^{-p\beta /\alpha }\Big\}.  \label{est}
\end{eqnarray}

Set $c_{1}=\left( 2C\right) ^{-2/p},C_{1}=2C$ with the $C$ in (\ref{est}),
we then have 
\begin{equation*}
\mathbf{E}\left[ \sup_{S\leq t\leq T}\left\vert X_{t}^{n,m}\right\vert ^{p}%
\right] \leq C_{1}\Big\{\mathbf{E}\left[ \left\vert X_{S}^{n,m}\right\vert
^{p}\right] +n^{-p\beta /\alpha }+m^{-p\beta /\alpha }\Big\}
\end{equation*}%
if $0\leq T-S\leq c_{1}$.

\emph{Rate of convergence. }Now let us assume $X_{t}$ is a strong solution
to (\ref{m1}). We have, by It\^{o} formula and \eqref{m3}, for $t\in \left[
S,T\right] $, 
\begin{eqnarray*}
&&u\left( t,X_{t}\right) -u\left( S,X_{S}\right) \\
&=&\int_{S}^{t}\tilde{b}\left( X_{r}\right) dr+\int_{S}^{t}\int_{\left\vert
y\right\vert \leq 1}\left[ u\left( r,X_{r-}+G\left( X_{r-}\right) y\right)
-u\left( r,X_{r-}\right) \right] q\left( dr,dy\right) \\
&+&\int_{S}^{t}\int_{\left\vert y\right\vert >1}\left[ u\left(
r,X_{r-}+G\left( X_{r-}\right) y\right) -u\left( r,X_{r-}\right) \right]
N\left( dr,dy\right) ,
\end{eqnarray*}%
Hence for $t\in \left[ S,T\right] $, we obtain 
\begin{eqnarray*}
&&X_{t}-X_{S}=u\left( t,X_{t}\right) -u\left( S,X_{S}\right) \\
&+&\int_{S}^{t}\int_{\left\vert y\right\vert \leq 1}\{G\left( X_{r-}\right)
y-\left[ u\left( r,X_{r-}+G\left( X_{r-}\right) y\right) -u\left(
r,X_{r-}\right) \right] \}q\left( dr,dy\right) \\
&+&\int_{S}^{t}\int_{\left\vert y\right\vert >1}\{G\left( X_{r-}\right) y- 
\left[ u\left( r,X_{r-}+G\left( X_{r-}\right) y\right) -u\left(
r,X_{r-}\right) \right] \}N\left( dr,dy\right) ,
\end{eqnarray*}

and 
\begin{eqnarray*}
&&X_{t}^{n}-X_{t} \\
&=&\{X_{S}^{n}-X_{S}+[u\left( t,X_{t}^{n}\right) -u\left( S,X_{S}^{n}\right)
]-[u\left( t,X_{t}\right) -u\left( S,X_{S}\right) ]\} \\
&+&\sum_{k=2}^{5}D_{t}^{n,k}+D_{t}^{n,6}+D_{t}^{n,7} \\
&-&\int_{S}^{t}\int_{\left\vert y\right\vert \leq 1}\{G\left( X_{r-}\right)
y-\left[ u\left( r,X_{r-}+G\left( X_{r-}\right) y\right) -u\left(
r,X_{r-}\right) \right] \}q\left( dr,dy\right) \\
&-&\int_{S}^{t}\int_{\left\vert y\right\vert >1}\{G\left( X_{r-}\right) y- 
\left[ u\left( r,X_{r-}+G\left( X_{r-}\right) y\right) -u\left(
r,X_{r-}\right) \right] \}N\left( dr,dy\right) .
\end{eqnarray*}

Estimates for $D_{t}^{n,k},k=2,\ldots ,5$ have been derived above. And we
can estimate 
\begin{eqnarray*}
&&\qquad \qquad X_{S}^{n}-X_{S}+[u\left( t,X_{t}^{n}\right) -u\left(
S,X_{S}^{n}\right) ]-u\left( t,X_{t}\right) -u\left( S,X_{S}\right) , \\
&&D_{t}^{n,6}-\int_{S}^{t}\int_{\left\vert y\right\vert \leq 1}\{G\left(
X_{r-}\right) y-\left[ u\left( r,X_{r-}+G\left( X_{r-}\right) y\right)
-u\left( r,X_{r-}\right) \right] \}q\left( dr,dy\right) , \\
&&D_{t}^{n,7}-\int_{S}^{t}\int_{\left\vert y\right\vert >1}\{G\left(
X_{r-}\right) y-\left[ u\left( r,X_{r-}+G\left( X_{r-}\right) y\right)
-u\left( r,X_{r-}\right) \right] \}N\left( dr,dy\right)
\end{eqnarray*}%
in exactly the same way as we estimated $D_{t}^{n,m;1},D_{t}^{n,m;6}$ and $%
D_{t}^{n,m;7}$ (by replacing $X_{t}^{m}$ by $X_{t}$ in the arguments). We
find that there is a constant $C=C\left( \alpha ,\beta ,p,K,c_{0},d\right) $
such that%
\begin{equation*}
\mathbf{E}\left[ \sup_{S\leq t\leq T}\left\vert X_{t}^{n}-X_{t}\right\vert
^{p}\right] \leq C_{1}\left( \mathbf{E}\left[ \left\vert
X_{S}^{n}-X_{S}\right\vert ^{p}\right] +n^{-p\beta /\alpha }\right) .
\end{equation*}%
Then the claimed rate of convergence holds because of Lemma \ref{gle}.
\end{proof}

\emph{Existence of a solution.} Let $p\in (0,\alpha )$ and $c_{1}$ be the
constant in Lemma \ref{lem6}. By Lemmas \ref{gle} and \ref{lem6}, there is $%
C=C\left( \alpha ,\beta ,p,K,c_{0},d\right) $ such that for $n,m\geq 1,$%
\begin{equation*}
\mathbf{E}\left[ \sup_{0\leq t\leq 1}\left\vert
X_{t}^{n}-X_{t}^{m}\right\vert ^{p}\right] \leq C\left( n^{-p\beta /\alpha
}+m^{-p\beta /\alpha }\right) ,
\end{equation*}%
and thus 
\begin{equation*}
\mathbf{E}\left[ \sup_{0\leq t\leq 1}\left\vert
X_{t}^{n}-X_{t}^{m}\right\vert ^{p}\right] \rightarrow 0
\end{equation*}%
as $n,m\rightarrow \infty $. Therefore there is an adapted c\`{a}dl\`{a}g
process $X_{t}$ such that for all $p\in \left( 0,\alpha \right) ,$%
\begin{equation*}
\mathbf{E}\left[ \sup_{0\leq t\leq 1}\left\vert X_{t}^{n}-X_{t}\right\vert
^{p}\right] \rightarrow 0
\end{equation*}%
as $n\rightarrow \infty $. Hence $X_{t}$ solves (\ref{m1}). Moreover, by
Lemma \ref{lem6}, there is $C=C\left( \alpha ,\beta ,d,K,p\right) $ such
that 
\begin{equation*}
\mathbf{E}\left[ \sup_{0\leq t\leq 1}\left\vert X_{t}^{n}-X_{t}\right\vert
^{p}\right] \leq Cn^{-p\beta /\alpha }.
\end{equation*}

\emph{Uniqueness }follows from Lemma \ref{lem6}: any strong solution can be
approximated by $X_{t}^{n}$.

\subsection{Proof of Proposition \protect\ref{t1}}

The proof repeats the steps we took to prove Proposition \ref{pro2}.

\begin{lemma}
\label{lem7}Let $\alpha \in \lbrack 1,2)$, $\beta \in (0,1)$, $\beta
>1-\alpha /2$, $p\in \left( 0,\alpha \right) $ and \textbf{S}$\left(
c_{0}\right) ,$\textbf{A}$(K,c_{0})$ hold. Assume (without loss of
generality), $\left\vert \rho \right\vert _{\beta }\leq K$, $\left\vert
b\right\vert _{\beta }\leq K$ for the same $K$. Then there are constants $%
C_{1}=C_{1}\left( \alpha ,\beta ,d,K,c_{0},p\right) ,c_{1}=c_{1}\left(
\alpha ,\beta ,d,K,c_{0},p\right) $ such that for any $0\leq S\leq T\leq 1$
with $T-S\leq c_{1}$ we have%
\begin{equation*}
\mathbf{E}\left[ \sup_{S\leq t\leq T}\left\vert
Y_{t}^{n}-Y_{t}^{m}\right\vert ^{p}\right] \leq C_{1}\left( \mathbf{E}\left[
\left\vert Y_{S}^{n}-Y_{S}^{m}\right\vert ^{p}\right] +l(n,\beta ,\alpha
,p)+l(m,\beta ,\alpha ,p)\right) ,
\end{equation*}%
where $l(k,\beta ,\alpha ,p)=k^{-p\beta /\alpha }$ if $p\beta <\alpha $, $%
l(k,\beta ,\alpha ,p)=\left( k/\ln k\right) ^{-1}$ if $p\beta =\alpha $, and 
$l(k,\beta ,\alpha ,p)=k^{-1}$ if $p\beta >\alpha .$

Moreover, if $Y_{t}$ is a strong solution to (\ref{m2}), then \ \ \ \ \ \ \
\ \ \ \ \ \ \ \ \ \ \ \ \ \ \ \ \ \ \ \ \ \ \ \ \ \ \ \ \ \ \ \ \ \ \ \ \ \
\ \ \ \ \ \ \ \ \ \ \ \ \ \ \ \ \ \ \ \ \ \ \ \ \ \ \ \ \ \ \ 
\begin{equation*}
\mathbf{E}\left[ \sup_{S\leq t\leq T}\left\vert Y_{t}^{n}-Y_{t}\right\vert
^{p}\right] \leq C_{1}l(n,\beta ,\alpha ,p).
\end{equation*}
\end{lemma}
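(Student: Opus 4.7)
The plan is to follow verbatim the strategy of Lemma~\ref{lem6}, simplified by the fact that the driving process $L^{0}$ has only small jumps ($\vert y\vert \leq 1$) and all moments. First I solve the backward Kolmogorov equation (\ref{m3}) on $[S,T]$ with drift $\tilde{b}=b$ (no large-jump compensator adjustment is needed because $L^{0}$ has no large-jump component) via Corollary~\ref{lem5}, obtaining $u\in C^{\alpha+\beta}$ with norms bounded uniformly in $T$. Applying It\^{o}'s formula to $u(t,Y_{t}^{n})$ along (\ref{m2'}) and combining with (\ref{m2'}) itself yields a decomposition $Y_{t}^{n}-Y_{S}^{n}=\sum_{k=1}^{5}D_{t}^{n,k}$, whose five summands correspond to $D^{n,1},D^{n,2},D^{n,3},D^{n,4}$ and $D^{n,6}$ in the proof of Lemma~\ref{lem6} (the $\vert y\vert>1$ terms $D^{n,5},D^{n,7}$ are absent). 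Setting $Y_{t}^{n,m}=Y_{t}^{n}-Y_{t}^{m}$ and $D_{t}^{n,m;k}=D_{t}^{n,k}-D_{t}^{m,k}$, the task reduces to bounding $\mathbf{E}\sup_{S\leq t\leq T}\vert D_{t}^{n,m;k}\vert^{p}$.

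The rate-producing term is the drift-difference
\begin{equation*}
D_{t}^{n,2}=\int_{S}^{t}\bigl[b(Y_{\pi_{n}(r)}^{n})-b(Y_{r}^{n})\bigr]\cdot\bigl(I_{d}-\nabla u(r,Y_{r}^{n})\bigr)\,dr.
\end{equation*}
By $\beta$-H\"{o}lder continuity of $b$ and H\"{o}lder's inequality, $\mathbf{E}\sup_{S\leq t\leq T}\vert D^{n,2}_{t}\vert^{p}$ is controlled by $C\,\mathbf{E}\int_{S}^{T}\vert Y_{\pi_{n}(r)}^{n}-Y_{r}^{n}\vert^{p\beta}\,dr$, and Corollary~\ref{co2} with exponent $q=p\beta$ produces exactly $n^{-p\beta/\alpha}$ when $p\beta<\alpha$, $(n/\ln n)^{-1}$ when $p\beta=\alpha$, and $n^{-1}$ when $p\beta>\alpha$ --- precisely $l(n,\beta,\alpha,p)$. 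The remaining terms all contribute rates no worse: $D^{n,m;1}$ (the telescoping sum in $u$) and $D^{n,m;5}$ (the clean q-integral whose integrand has small-$y$ expansion $O(\vert y\vert^{1+\beta'})$ with $2\beta'>\alpha$) both yield $C(T-S)^{p/2}\mathbf{E}\sup\vert Y^{n,m}\vert^{p}$ via Corollary~\ref{lem5} regularity and Remark~\ref{re1}; $D^{n,m;3}$ (compensator correction with integrand $\vert y\vert^{1+\beta'}$, $1+\beta'>\alpha$) contributes at worst $n^{-1}$; and $D^{n,m;4}$ (the q-integral carrying the $G(Y_{\pi_{n}}^{n})-G(Y_{r-}^{n})$ difference) is estimated by Lemmas~\ref{ele11}--\ref{ele12} for $p<\alpha$ and by Lemma~\ref{ele1} for $p\geq\alpha$, producing at most $n^{-1}$ (or $(n/\ln n)^{-1}$ in the critical case $p=\alpha$).

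The main technical obstacle is twofold. First, bookkeeping across the three regimes $p\beta<\alpha$, $p\beta=\alpha$, $p\beta>\alpha$ simultaneously (and the separate threshold $p=\alpha$ in $D^{n,m;4}$) requires care to verify that $D^{n,m;2}$ is indeed the dominant error term. Second --- and this is what truly distinguishes the truncated from the non-truncated case --- the availability of the higher-moment estimate Lemma~\ref{ele1} for $p\geq\alpha$ rests entirely on $L^{0}$ having all moments; this is what allows $l(n,\beta,\alpha,p)$ to improve from $n^{-p\beta/\alpha}$ to $n^{-1}$ beyond the threshold $p\beta=\alpha$, an improvement unavailable in Proposition~\ref{pro2}.

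Collecting the estimates gives
\begin{equation*}
\mathbf{E}\bigl[\sup_{S\leq t\leq T}\vert Y_{t}^{n,m}\vert^{p}\bigr]\leq C\bigl\{(T-S)^{p/2}\mathbf{E}\bigl[\sup_{S\leq t\leq T}\vert Y_{t}^{n,m}\vert^{p}\bigr]+\mathbf{E}\vert Y_{S}^{n,m}\vert^{p}+l(n,\beta,\alpha,p)+l(m,\beta,\alpha,p)\bigr\},
\end{equation*}
and choosing $T-S$ small enough that $C(T-S)^{p/2}\leq 1/2$ absorbs the first term on the right and yields the Cauchy estimate with $C_{1}=2C$, $c_{1}=(2C)^{-2/p}$. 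The rate-of-convergence claim follows from an identical argument with $Y_{t}^{m}$ replaced by the strong solution $Y_{t}$: apply It\^{o}'s formula to $u(t,Y_{t})$ along the equation satisfied by $Y_{t}$ and re-derive the analogous decomposition differences exactly as in the final portion of the proof of Lemma~\ref{lem6}.
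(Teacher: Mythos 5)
Your proposal is correct and follows essentially the same route as the paper: decompose $Y_t^n$ via the It\^o--Tanaka trick with $\tilde b=b$ and Corollary~\ref{lem5}, obtain the five summands (your $D^{n,1},D^{n,2},D^{n,3},D^{n,4},D^{n,6}$ are exactly the paper's $B^{n,1},\dots,B^{n,5}$), bound each using Corollary~\ref{co2} and Lemmas~\ref{ele1},~\ref{ele11}--\ref{ele12}, and absorb the Gronwall-type term by taking $T-S$ small. A couple of your intermediate rate statements are loose --- for $p<\alpha$ the $B^{n,4}$ and $B^{n,3}$ contributions are $(n/\ln n)^{-p/\alpha}$ (or $[n/(\ln n)^2]^{-p}$ when $\alpha=1$), not ``at most $n^{-1}$,'' and in the critical case $p=\alpha$ the paper's bound for $B^{n,4}$ is $[n/(\ln n)^2]^{-1}$ rather than $(n/\ln n)^{-1}$ --- but all of these remain dominated by $l(n,\beta,\alpha,p)$, so the conclusion is unaffected.
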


\begin{proof}
Let $0\leq S\leq T\leq 1$. By Corollary \ref{lem5}, for each $k=1,\ldots ,d$%
, there exists a unique solution $u^{k}\left( t,x\right) $ to (\ref{m3})
with $\tilde{b}\left( x\right) =b\left( x\right) ,x\in \mathbf{R}^{d}.$
Denote $u=\left( u^{k}\right) _{1\leq k\leq d}$. By It\^{o} formula and
definition of Euler approximation (\ref{m2'}), for $t\in \left[ S,T\right] $%
, using (\ref{m3}), 
\begin{eqnarray}
&&u^{k}\left( t,Y_{t}^{n}\right) -u^{k}\left( S,Y_{S}^{n}\right)  \notag \\
&=&\int_{S}^{t}b^{k}\left( Y_{r}^{n}\right) dr+\int_{S}^{t}\left[
b^{k}\left( Y_{\pi _{n}\left( r\right) }^{n}\right) -b^{k}\left(
Y_{r}^{n}\right) \right] \cdot \nabla u^{k}\left( r,Y_{r}^{n}\right) dr 
\notag \\
&+&\int_{S}^{t}\int_{\left\vert y\right\vert \leq 1}\left[ u^{k}\left(
r,Y_{r-}^{n}+G\left( Y_{\pi _{n}\left( r\right) }^{n}\right) y\right)
-u^{k}\left( r,Y_{r-}^{n}\right) \right] q\left( dr,dy\right)  \notag \\
&+&\int_{S}^{t}\int_{\left\vert y\right\vert \leq 1}\{u^{k}\left(
r,Y_{r}^{n}+G\left( Y_{\pi _{n}\left( r\right) }^{n}\right) y\right)
-u^{k}\left( r,Y_{r}^{n}+G\left( Y_{r}^{n}\right) y\right)  \notag \\
&&\quad -\nabla u^{k}\left( r,Y_{r}^{n}\right) \cdot \left[ G\left( Y_{\pi
_{n}\left( r\right) }^{n}\right) -G\left( Y_{r}^{n}\right) \right] y\}\rho
\left( y\right) \frac{dydr}{\left\vert y\right\vert ^{d+\alpha }}.  \notag
\end{eqnarray}%
On the other hand, according to (\ref{m2'}), for $t\in \left[ S,T\right] $ 
\begin{eqnarray*}
Y_{t}^{n}-Y_{S}^{n} &=& \int_{S}^{t}b\left( Y_{r}^{n}\right) dr+\int_{S}^{t} 
\left[ b\left( Y_{\pi _{n}\left( r\right) }^{n}\right) -b\left(
Y_{r}^{n}\right) \right] dr \\
&&+\int_{S}^{t}\int_{\left\vert y\right\vert \leq 1}G\left( Y_{\pi
_{n}\left( r\right) }^{n}\right) yq\left( dr,dy\right) .
\end{eqnarray*}%
It follows from the two identities above that%
\begin{equation*}
Y_{t}^{n}=\sum_{k=1}^{4}B_{t}^{n,k}+B_{t}^{n,5},
\end{equation*}%
where%
\begin{eqnarray*}
B_{t}^{n,1} &=&Y_{S}^{n}+\big[u\left( t,Y_{t}^{n}\right) -u\left(
S,Y_{S}^{n}\right) \big], \\
B_{t}^{n,2} &=&\int_{S}^{t}\left[ b\left( Y_{\pi _{n}\left( r\right)
}^{n}\right) -b\left( Y_{r}^{n}\right) \right] \big(I_{d}-\nabla u\left(
r,Y_{r}^{n}\right) \big)dr, \\
B_{t}^{n,3} &=&\int_{S}^{t}\int_{\left\vert y\right\vert \leq 1}\{u\left(
r,Y_{r}^{n}+G\left( Y_{r}^{n}\right) y\right) -u\left( r,Y_{r}^{n}+G\left(
Y_{\pi _{n}\left( r\right) }^{n}\right) y\right) \\
&&\quad -\nabla u\left( r,Y_{r}^{n}\right) \cdot \left[ G\left(
Y_{r}^{n}\right) -G\left( Y_{\pi _{n}\left( r\right) }^{n}\right) \right]
y\}\rho \left( y\right) \frac{dydr}{\left\vert y\right\vert ^{d+\alpha }}, \\
B_{t}^{n,4} &=&\int_{S}^{t}\int_{\left\vert y\right\vert \leq 1}\Big\{%
\lbrack u\left( r,Y_{r-}^{n}+G\left( Y_{r-}^{n}\right) y\right) -u\left(
r,Y_{r-}^{n}+G\left( Y_{\pi _{n}\left( r\right) }^{n}\right) y\right) ] \\
&&\quad +[G\left( Y_{\pi _{n}\left( r\right) }^{n}\right) -G\left(
Y_{r-}^{n}\right) ]y\Big\}q\left( dr,dy\right) , \\
B_{t}^{n,5} &=&\int_{S}^{t}\int_{\left\vert y\right\vert \leq 1}\Big\{%
G\left( Y_{r-}^{n}\right) y-\left[ u\left( r,Y_{r-}^{n}+G\left(
Y_{r-}^{n}\right) y\right) -u\left( r,Y_{r-}^{n}\right) \right] \Big\}%
q\left( dr,dy\right) .
\end{eqnarray*}

Let $B_t^{n,m;k}=B_t^{n,k}-B_t^{m,k},$ and $Y_t^{n,m}=Y_t^{n}-Y_t^{m},n,m%
\geq 1,k=1,\ldots ,5$.

\emph{Estimate of }$B_{t}^{n,m;1}$. This estimate is identical to that of $%
D^{n,m;1}$ in the proof of Lemma \ref{lem6}. Repeating it and applying
Corollary \ref{lem5}, we see that for $p\in \left( 0,\infty \right) $ there
is $C=C\left( \alpha ,\beta ,p,K,c_{0},d\right) $ so that 
\begin{equation*}
\left\vert B_{t}^{n,m;1}\right\vert ^{p}\leq C\{\left[ \left\vert
Y_{S}^{n}-Y_{S}^{m}\right\vert ^{p}\right] +\left( T-t\right)
^{p/2}\left\vert Y_{t}^{n}-Y_{t}^{m}\right\vert ^{p}\},
\end{equation*}%
and, 
\begin{equation*}
\mathbf{E}\left[ \sup_{S\leq t\leq T}\left\vert B_{t}^{n,m;1}\right\vert ^{p}%
\right] \leq C\{\left( T-S\right) ^{p/2}\mathbf{E}\left[ \sup_{S\leq t\leq
T}\left\vert Y_{t}^{n,m}\right\vert ^{p}\right] +\mathbf{E}\left\vert
Y_{S}^{n,m}\right\vert ^{p}\}.
\end{equation*}

Estimates of $B_t^{n,m;k},k=2,3,4,$ for $p\in \left( 0,\alpha \right) $ are
identical to the estimates of $D_t^{n,m;k},k=2,3,4.$ We replace $X$ by $Y$,
and apply Corollary \ref{co2} instead of \ref{co1}. Note that for $p\in
\left( 0,\alpha \right) $ the estimates in Corollary \ref{co1} coincide with
estimates in Corollary \ref{co2}. Hence for $p\in \left( 0,\alpha \right) $
there is $C=C\left( \alpha ,\beta ,p,c_{0},K\right) $ such that for $%
k=2,3,4, $%
\begin{equation*}
\mathbf{E}\left[ \sup_{S\leq t\leq T}\left\vert B_{t}^{n,m;k}\right\vert ^{p}%
\right] \leq C\left( n^{-p\beta /\alpha }+m^{-p\beta /\alpha }\right) .
\end{equation*}

\emph{Estimate of }$B_{t}^{n,m;2}$ \emph{for }$p\in \lbrack \alpha ,\infty )$%
. By H\"{o}lder inequality and Corollary \ref{lem5}, there is $C=C\left(
\alpha ,\beta ,d,K,c_{0},p\right) $ such that%
\begin{equation*}
\mathbf{E}\left[ \sup_{S\leq t\leq T}\left\vert B_{t}^{n,2}\right\vert ^{p}%
\right] \leq C\mathbf{E}\left[ \int_{S}^{T}\left\vert Y_{\pi _{n}\left(
r\right) }^{n}-Y_{r}^{n}\right\vert ^{p\beta }dr\right] .
\end{equation*}%
Hence, by Corollary \ref{co2},%
\begin{equation*}
\mathbf{E}\left[ \sup_{S\leq t\leq T}\left\vert B_{t}^{n,2}\right\vert ^{p}%
\right] \leq Cl(n,\beta ,\alpha ,p),
\end{equation*}%
where $l(n,\beta ,\alpha ,p)=n^{-p\beta /\alpha }$ if $p\beta <\alpha $, $%
l(n,\beta ,\alpha ,p)=\left( n/\ln n\right) ^{-1}$ if $p\beta =\alpha $, and 
$l(n,\beta ,\alpha ,p)=n^{-1}$ if $p\beta >\alpha .$ Therefore for $p\in %
\left[ \alpha ,\infty \right) $, 
\begin{equation*}
\mathbf{E}\left[ \sup_{S\leq t\leq T}\left\vert B_{t}^{n,m;2}\right\vert ^{p}%
\right] \leq C[l(n,\beta ,\alpha ,p)+l\left( m,\beta ,\alpha ,p\right) ].
\end{equation*}

\emph{Estimate of }$B_{t}^{n,m;3}$ \emph{for }$p\in \lbrack \alpha ,\infty )$%
. By repeating the argument for $D_{t}^{n,3}$ in the proof of Proposition %
\ref{pro2}, we find that there is $C=C\left( \alpha ,\beta ,d,K\right) $ so
that 
\begin{equation*}
\left\vert B_{t}^{n,3}\right\vert \leq C\int_{S}^{T}\left\vert
Y_{r}^{n}-Y_{\pi _{n}\left( r\right) }^{n}\right\vert dr,t\in \left[ S,T%
\right] .
\end{equation*}%
Hence by Corollary \ref{co2}, for $p\geq \alpha $ there is $C=C\left( \alpha
,\beta ,d,K,c_{0},p\right) $ such that 
\begin{equation*}
\mathbf{E}\left[ \sup_{S\leq t\leq T}\left\vert B_{t}^{n,3}\right\vert ^{p}%
\right] \leq C\int_{S}^{T}\mathbf{E}\left[ \left\vert Y_{r}^{n}-Y_{\pi
_{n}\left( r\right) }^{n}\right\vert ^{p}\right] dr\leq Cl\left( n,1,\alpha
,p\right) .
\end{equation*}%
Therefore, for $p\in \lbrack \alpha ,\infty )$ there is $C=C\left( \alpha
,\beta ,p,K,c_{0},d\right) $ such that 
\begin{eqnarray*}
\mathbf{E}\left[ \sup_{S\leq t\leq T}\left\vert B_{t}^{n,m;3}\right\vert ^{p}%
\right] &\leq &C\left[ l\left( n,1,\alpha ,p\right) +l\left( m,1,\alpha
,p\right) \right] \\
&\leq &C\left[ l\left( n,\beta ,\alpha ,p\right) +l\left( m,\beta ,\alpha
,p\right) \right] .
\end{eqnarray*}

\emph{Estimate of }$B_{t}^{n,m;4}$ \emph{for }$p\in \lbrack \alpha ,\infty
). $

\begin{eqnarray*}
B_{t}^{n,4} &=&\int_{S}^{t}\int_{\left\vert y\right\vert \leq 1}\Big\{%
\lbrack u\left( r,Y_{r-}^{n}+G\left( Y_{r-}^{n}\right) y\right) -u\left(
r,Y_{r-}^{n}+G\left( Y_{\pi _{n}\left( r\right) }^{n}\right) y\right) ] \\
&&\quad +[G\left( Y_{\pi _{n}\left( r\right) }^{n}\right) -G\left(
Y_{r-}^{n}\right) ]y\Big\}q\left( dr,dy\right) ,
\end{eqnarray*}%
By Corollary \ref{co2}(i), there is $C=C\left( \alpha ,d,K\right) $ such that%
\begin{equation*}
R:=\mathbf{E}\int_{S}^{T}\left\vert Y_{\pi _{n}\left( r\right)
}^{n}-Y_{r}^{n}\right\vert ^{\alpha }dr\leq C\left( n/\ln n\right) ^{-1}.
\end{equation*}

Applying Lemma \ref{ele11}(ii) with 
\begin{equation}
\bar{F}_{r}=\left( 1+|\nabla u|_{0}\right) \left\vert \nabla G\right\vert
_{\infty }\left\vert Y_{\pi _{n}\left( r\right) }^{n}-Y_{r-}^{n}\right\vert
,r\in \left[ S,T\right] ,  \label{e3}
\end{equation}%
we can see there is $C=C\left( \alpha ,\beta ,K,c_{0},d\right) $ such that%
\begin{equation*}
\mathbf{E}\left[ \sup_{S\leq t\leq T}\left\vert B_{t}^{n,4}\right\vert
^{\alpha }\right] \leq CR\left( 1+\left\vert \ln R\right\vert \right) \leq C%
\left[ n/\left( \ln n\right) ^{2}\right] ^{-1}.
\end{equation*}

By Lemma \ref{ele1} with $\bar{F}_{r}$ given by (\ref{e3}) and Corollary \ref%
{co2}, for \thinspace $p>\alpha \,\ $there is $C=C\left( \alpha ,\beta
,p,d,K,c_{0}\right) $ such that%
\begin{equation*}
\mathbf{E}\left[ \sup_{S\leq t\leq T}\left\vert B_{t}^{n,4}\right\vert ^{p}%
\right] \leq Cn^{-1}.
\end{equation*}

Hence for $p\geq \alpha $ there is $C=C\left( \alpha ,\beta
,p,d,K,c_{0}\right) $ such that%
\begin{equation*}
\mathbf{E}\left[ \sup_{S\leq t\leq T}\left\vert B_{t}^{n,m;4}\right\vert ^{p}%
\right] \leq C\left[ l\left( n,\beta ,\alpha ,p\right) +l\left( m,\beta
,\alpha ,p\right) \right] .
\end{equation*}

\emph{Estimate of }$B_t^{n,m;5}.$ As in the case of $D_t^{n,m;6}$ in the
proof of Proposition \ref{pro2}, we rewrite 
\begin{eqnarray*}
B_{t}^{n,m;5} &=&\int_{S}^{t}\int_{\left\vert y\right\vert \leq 1}\Big\{%
\lbrack G\left( Y_{r-}^{n}\right) -G\left( Y_{r-}^{m}\right) ]y \\
&&\quad -\left[ u\left( r,Y_{r-}^{n}+G\left( Y_{r-}^{n}\right) y\right)
-u\left( r,Y_{r-}^{n}+G\left( Y_{r-}^{m}\right) y\right) \right] \Big\}%
q\left( dr,dy\right) \\
&-&\int_{S}^{t}\int_{\left\vert y\right\vert \leq 1}\{[u\left(
r,Y_{r-}^{n}+G\left( Y_{r-}^{m}\right) y\right) -u\left(
r,Y_{r-}^{m}+G\left( Y_{r-}^{m}\right) y\right) ] \\
&&\quad +\left[ u\left( r,Y_{r}^{m}\right) -u\left( r,Y_{r}^{n}\right) %
\right] \}q\left( dr,dy\right) \\
&:=&B_{t}^{n,m;51}+B_{t}^{n,m;52},
\end{eqnarray*}%
and 
\begin{eqnarray*}
B_{t}^{n,m;52} &=&\int_{S}^{t}\int_{\left\vert y\right\vert \leq
1}\int_{0}^{1}[-\nabla u\left( r,Y_{r-}^{m}+G\left( Y_{r-}^{m}\right)
y+sY_{r-}^{n,m}\right) \\
&&+\nabla u\left( r,Y_{r-}^{m}+sY_{r-}^{n,m}\right) ]Y_{r-}^{n,m}dsq\left(
dr,dy\right) ,t\in \left[ S,T\right] .
\end{eqnarray*}

For $p\in \left( 0,2\right) $, repeating the estimates of $D^{m,m;6}$ in the
proof of Proposition \ref{pro2}, we find that for $p\in \left( 0,2\right) $
there is $C=C\left( \alpha ,p,K,c_{0},\beta ,d\right) $ such that%
\begin{equation*}
\mathbf{E}\left[ \sup_{S\leq t\leq T}\left\vert B_{t}^{n,m;5}\right\vert ^{p}%
\right] \leq C\left( T-S\right) ^{p/2}\mathbf{E}\left[ \sup_{S\leq t\leq
T}\left\vert Y_{r}^{n,m}\right\vert ^{p}\right] .
\end{equation*}

For $p\geq 2,$ by Lemma \ref{ele5}(i), there is $C=C\left( \alpha
,p,K,c_{0},\beta ,d\right) $ such that%
\begin{equation*}
\mathbf{E}\left[ \sup_{S\leq t\leq T}\left\vert B_{t}^{n,m;5}\right\vert ^{p}%
\right] \leq C\left( T-S\right) \mathbf{E}\left[ \sup_{S\leq t\leq
T}\left\vert Y_{r}^{n,m}\right\vert ^{p}\right] .
\end{equation*}

Collecting all the estimates above we see that for $p\in \left( 0,\infty
\right) $ there is $C=C\left( \alpha ,\beta ,K,c_{0},p,d\right) $ such that 
\begin{eqnarray*}
\mathbf{E}\left[ \sup_{S\leq t\leq T}\left\vert Y_{t}^{n,m}\right\vert ^{p}%
\right] &\leq &C\Big\{\lbrack \left( T-S\right) ^{p/2}+(T-S)]\mathbf{E}\left[
\sup_{S\leq t\leq T}\left\vert Y_{t}^{n,m}\right\vert ^{p}\right] \\
&&+\mathbf{E}\left[ \left\vert Y_{S}^{n,m}\right\vert ^{p}\right] +l\left(
n,\beta ,\alpha ,p\right) +l\left( m,\beta ,\alpha ,p\right) \Big\}.
\end{eqnarray*}

There is $c_{1}=c_{1}\left( \alpha ,\beta ,K,c_{0},d,p\right) $ such that $C%
\left[ \left( T-S\right) ^{p/2}+\left( T-S\right) \right] \leq 1/2$ if $%
0\leq T-S\leq c_{1}$. In that case%
\begin{equation*}
\mathbf{E}\left[ \sup_{S\leq t\leq T}\left\vert Y_{t}^{n,m}\right\vert ^{p}%
\right] \leq 2C\Big\{\mathbf{E}\left[ \left\vert Y_{S}^{n,m}\right\vert ^{p}%
\right] +l\left( n,\beta ,\alpha ,p\right) +l\left( m,\beta ,\alpha
,p\right) \Big\}.
\end{equation*}

\emph{Rate of convergence. }Now let us assume $Y_{t}$ is a strong solution
to (\ref{m1'}). We have, by It\^{o} formula and (\ref{m3}), for $t\in \left[
S,T\right] $, 
\begin{eqnarray*}
&&u\left( t,Y_{t}\right) -u\left( S,Y_{S}\right) \\
&=&\int_{S}^{t}b\left( Y_{r}\right) dr+\int_{S}^{t}\int_{\left\vert
y\right\vert \leq 1}\left[ u\left( r,Y_{r-}+G\left( Y_{r-}\right) y\right)
-u\left( r,Y_{r-}\right) \right] q\left( dr,dy\right) .
\end{eqnarray*}%
Hence for $t\in \left[ S,T\right] $, we obtain 
\begin{eqnarray*}
&&Y_{t}-Y_{S}=u\left( t,Y_{t}\right) -u\left( S,Y_{S}\right) \\
&+&\int_{S}^{t}\int_{\left\vert y\right\vert \leq 1}\{G\left( Y_{r-}\right)
y-\left[ u\left( r,Y_{r-}+G\left( Y_{r-}\right) y\right) -u\left(
r,Y_{r-}\right) \right] \}q\left( dr,dy\right) ,
\end{eqnarray*}
and thus 
\begin{eqnarray*}
&&Y_{t}^{n}-Y_{t} \\
&=&\{Y_{S}^{n}-Y_{S}+[u\left( t,Y_{t}^{n}\right) -u\left( S,Y_{S}^{n}\right)
]-[u\left( t,Y_{t}\right) -u\left( S,Y_{S}\right) ]\} \\
&+&\sum_{k=2}^{4}B_{t}^{n,k}+B_{t}^{n,5} \\
&-&\int_{S}^{t}\int_{\left\vert y\right\vert \leq 1}\{G\left( Y_{r-}\right)
y-\left[ u\left( r,Y_{r-}+G\left( Y_{r-}\right) y\right) -u\left(
r,Y_{r-}\right) \right] \}q\left( dr,dy\right) .
\end{eqnarray*}

Estimates for $B^{n,k},k=2,3,4$ have been derived above. And we can estimate 
\begin{eqnarray*}
&&\qquad \quad Y_{S}^{n}-Y_{S}+[u\left( t,Y_{t}^{n}\right) -u\left(
S,Y_{S}^{n}\right) ]-u\left( t,Y_{t}\right) -u\left( S,Y_{S}\right) , \\
&&B_{t}^{n,6}-\int_{S}^{t}\int_{\left\vert y\right\vert \leq 1}\{G\left(
Y_{r-}\right) y-\left[ u\left( r,Y_{r-}+G\left( Y_{r-}\right) y\right)
-u\left( r,Y_{r-}\right) \right] \}q\left( dr,dy\right)
\end{eqnarray*}%
in exactly the same way as we estimated $B_{t}^{n,m;1}$ and $B_{t}^{n,m;5}$
(by replacing $Y_{t}^{m}$ by $Y_{t}$ in the arguments). We find that there
is a constant $C=C\left( \alpha ,\beta ,p,K,c_{0},d\right) $ such that%
\begin{equation*}
\mathbf{E}\left[ \sup_{S\leq t\leq T}\left\vert Y_{t}^{n}-Y_{t}\right\vert
^{p}\right] \leq C[\mathbf{E}\left[ \left\vert Y_{S}^{n}-Y_{S}\right\vert
^{p}\right] +l\left( n,\beta ,\alpha ,p\right) ],
\end{equation*}%
and the claimed rate of convergence holds by Lemma \ref{gle}.
\end{proof}

The existence and uniqueness part is a simple repeat of the arguments in the
proof of Proposition \ref{pro2}.

\section{Appendix}

We will be using some general estimates of stochastic integrals. We start
with Lenglart's inequality (see \cite{le}). Let $Z_{t}$ be a nonnegative c%
\`{a}dl\`{a}g process and $A_{t}$ be an increasing predictable process. We
say that $A$ dominates $Z$ if for any finite stopping time $\tau $ 
\begin{equation*}
\mathbf{E}Z_{\tau }\leq \mathbf{E}A_{\tau }.
\end{equation*}

The following moment estimate holds.

\begin{lemma}
\label{lem3}( see Corollary II in \cite{le}) Let $Z$ be dominated by $A$.
Then for every $p\in \left( 0,1\right) $ and every stopping time $\tau $, 
\begin{equation*}
\mathbf{E}\left[ \left( \sup_{s\leq \tau }\left\vert Z_{\tau }\right\vert
\right) ^{p}\right] \leq \frac{2-p}{1-p}\mathbf{E}\left[ A_{\tau }^{p}\right]%
.
\end{equation*}
\end{lemma}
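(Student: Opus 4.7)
The statement is the classical Lenglart moment inequality, and I would prove it by the standard two-step argument: first a weak-type (distributional) bound, then integration via the layer-cake formula.

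\emph{Step 1 (weak-type bound).} For $c,d>0$ and a finite stopping time $\tau$, I would establish
\begin{equation*}
\mathbf{P}\!\left(\sup_{s\leq\tau} Z_s \geq c\right) \leq \frac{1}{c}\mathbf{E}[A_\tau \wedge d] + \mathbf{P}(A_\tau \geq d).
\end{equation*}
Introduce the stopping times $T_c = \inf\{s : Z_s \geq c\}$ and $\sigma_d = \inf\{s : A_s \geq d\}$: the first is a stopping time since $Z$ is c\`adl\`ag and adapted, the second since $A$ is adapted. Split according to whether $A_\tau \geq d$. On $\{A_\tau < d\}$ one has $\sigma_d > \tau$, hence $T_c \wedge \tau \wedge \sigma_d = T_c \wedge \tau$, and on the further event $\{\sup_{s\leq\tau} Z_s \geq c\}$ this equals $T_c$ with $Z_{T_c}\geq c$ by right-continuity. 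Markov's inequality together with the domination hypothesis applied at the bounded stopping time $\rho := T_c\wedge\tau\wedge\sigma_d$ gives
\begin{equation*}
c\,\mathbf{P}(\sup_{s\leq\tau} Z_s \geq c,\ A_\tau < d) \leq \mathbf{E}[Z_\rho] \leq \mathbf{E}[A_\rho] \leq \mathbf{E}[A_\tau \wedge d].
\end{equation*}

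\emph{Step 2 (layer cake).} Using $\mathbf{E}[X^p] = p\int_0^\infty c^{p-1}\mathbf{P}(X\geq c)\,dc$ and choosing $d=c$ in Step 1,
\begin{equation*}
\mathbf{E}\!\left[(\sup_{s\leq\tau} Z_s)^p\right] \leq p\int_0^\infty c^{p-2}\mathbf{E}[A_\tau \wedge c]\,dc + p\int_0^\infty c^{p-1}\mathbf{P}(A_\tau \geq c)\,dc.
\end{equation*}
Fubini together with $\mathbf{E}[A_\tau \wedge c] = \int_0^c \mathbf{P}(A_\tau \geq t)\,dt$ turns the first integral into $\frac{1}{1-p}\mathbf{E}[A_\tau^p]$ (the factor $1-p$ in the denominator is precisely what forces $p<1$), while the second equals $\mathbf{E}[A_\tau^p]$. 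Summing gives $\frac{1}{1-p}+1 = \frac{2-p}{1-p}$, the advertised constant.

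The only delicate point is the last inequality $\mathbf{E}[A_\rho]\leq \mathbf{E}[A_\tau \wedge d]$ in Step 1: on $\{A_\tau \geq d\}$ one has $\rho \leq \sigma_d$, but if $A$ has a jump at $\sigma_d$ then $A_{\sigma_d}$ could overshoot $d$. Here the predictability of $A$ is essential: it allows one to choose an announcing sequence $\sigma_d^{(n)}\uparrow \sigma_d$ with $A_{\sigma_d^{(n)}} \leq d$, apply the argument above with $\rho$ replaced by $T_c\wedge\tau\wedge\sigma_d^{(n)}$, and pass to the limit by monotone convergence. Everything else in the proof reduces to elementary manipulations with stopping times and Fubini's theorem, so this predictability technicality is the main (and essentially only) obstacle.
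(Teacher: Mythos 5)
Your proof is correct, and it is the standard argument: a weak-type (good-$\lambda$) estimate via the truncated stopping time $T_c\wedge\tau\wedge\sigma_d$, followed by the layer-cake integration with $d=c$. Note that the paper does not prove this lemma at all -- it simply cites Corollary II of Lenglart's 1977 paper -- so there is no proof in the paper to compare against; what you have reproduced is essentially Lenglart's own argument.

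Two small remarks on precision. First, the delicate point you identify (the possible overshoot of $A$ at $\sigma_d$) is indeed exactly where predictability of $A$ is used, and your fix via an announcing sequence is the right one; to make the limit passage literally a ``monotone convergence'' argument it is cleanest to write the Markov step on the events $\{\sup_{s\leq\tau}Z_s\geq c\}\cap\{A_\tau<d\}\cap\{\sigma_d^{(n)}\geq\tau\}$, on which $\rho^{(n)}=T_c$ and $Z_{\rho^{(n)}}\geq c$, and then let $n\to\infty$ using monotone convergence of probabilities (these events increase to $\{\sup_{s\leq\tau}Z_s\geq c\}\cap\{A_\tau<d\}$ because $\sigma_d^{(n)}\uparrow\sigma_d>\tau$ there). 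Second, the restriction $p<1$ enters, as you observe, through the convergence of $\int_t^\infty c^{p-2}\,dc=t^{p-1}/(1-p)$, and nowhere else. Everything else -- the domination hypothesis applied at a bounded stopping time, the identity $\mathbf{E}[A_\tau\wedge c]=\int_0^c\mathbf{P}(A_\tau\geq t)\,dt$, and the Fubini swap -- is correct as written. Your final constant $\frac{1}{1-p}+1=\frac{2-p}{1-p}$ matches the statement.
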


\begin{remark}
\label{re1}Let $H:[0,1)\times \Omega \times \mathbf{R}_{0}^{d}\rightarrow 
\mathbf{R}^{m}$ be a $\mathcal{P\times B}\left( \mathbf{R}_{0}^{d}\right) $%
-measurable function, $H:=H_{r}\left( y\right) ,r\in \lbrack 0,1),y\in 
\mathbf{R}^{d}$. Assume that for any $T\in \lbrack 0,1)$ a.s.,%
\begin{equation*}
\int_{0}^{T}\int \left\vert H_{r}\left( y\right) \right\vert ^{2}\rho \left(
y\right) \frac{dydr}{\left\vert y\right\vert ^{d+\alpha }}<\infty ,
\end{equation*}%
where $\mathcal{P}$ is a predictable $\sigma $-algebra on $[0,1)\times
\Omega $. Then

(i)\ (see \cite{le}) 
\begin{equation*}
Z_{t}=\left\vert \int_{0}^{t}\int H_{r}\left( y\right) q\left( dr,dy\right)
\right\vert ^{2},t\in \lbrack 0,1),
\end{equation*}%
is dominated by%
\begin{equation*}
A_{t}=\int_{0}^{t}\int \left\vert H_{r}\left( y\right) \right\vert ^{2}\rho
\left( y\right) \frac{dy}{\left\vert y\right\vert ^{d+\alpha }}dr,t\in
\lbrack 0,1).
\end{equation*}%
Hence by Lemma \ref{lem3} (Corollary II in \cite{le}), for any $p\in \left(
0,2\right) $ there is $C=C\left( p\right) $ such that for any stopping time $%
\tau ,$%
\begin{eqnarray*}
&&\mathbf{E}\left[ \sup_{t\leq \tau }\left\vert \int_{0}^{t}\int H_{r}\left(
y\right) q\left( dr,dy\right) \right\vert ^{p}\right] \\
&\leq &C\mathbf{E}\left[ \left( \int_{0}^{\tau }\int \left\vert H_{r}\left(
y\right) \right\vert ^{2}\rho \left( y\right) \frac{dy}{\left\vert
y\right\vert ^{d+\alpha }}dr\right) ^{p/2}\right] .
\end{eqnarray*}

(ii)\ On the other hand, for $p\in \left[ 1,2\right] $, by BGD inequality, 
\begin{eqnarray}
\mathbf{E}\left[ \sup_{t\leq \tau }\left\vert \int_{0}^{t}\int H_{r}\left(
y\right) q\left( dr,dy\right) \right\vert ^{p}\right] &\leq &C\mathbf{E}%
\left[ \left( \int_{0}^{\tau }\int \left\vert H_{r}\left( y\right)
\right\vert ^{2}\rho \left( y\right) N\left( dr,dy\right) \right) ^{p/2}%
\right]  \label{ff1} \\
&\leq &C\mathbf{E}\left[ \int_{0}^{\tau }\int \left\vert H_{r}\left(
y\right) \right\vert ^{p}\rho \left( y\right) N\left( dr,dy\right) \right] 
\notag \\
&\leq &C\mathbf{E}\int_{0}^{\tau }\int \left\vert H_{r}\left( y\right)
\right\vert ^{p}\frac{dy}{\left\vert y\right\vert ^{d+\alpha }}dr.  \notag
\end{eqnarray}
\end{remark}

\begin{remark}
\label{re2}Let $H:[0,1)\times \Omega \times \mathbf{R}_{0}^{d}\rightarrow 
\mathbf{R}^{m}$ be a $\mathcal{P\times B}\left( \mathbf{R}_{0}^{d}\right) $%
-measurable function, $H:=H_{r}\left( y\right) ,r\in \lbrack 0,1),y\in 
\mathbf{R}^{d},$ such that for any $T\in \lbrack 0,1)$ a.s.,%
\begin{equation*}
\int_{0}^{T}\int \left\vert H_{r}\left( y\right) \right\vert \rho \left(
y\right) \frac{dydr}{\left\vert y\right\vert ^{d+\alpha }}<\infty .
\end{equation*}%
(i) Obviously, 
\begin{equation*}
Z_{t}=\left\vert \int_{0}^{t}\int H_{r}\left( y\right) q\left( dr,dy\right)
\right\vert ,t\in \lbrack 0,1),
\end{equation*}%
is dominated by%
\begin{equation*}
A_{t}=2\int_{0}^{t}\int \left\vert H_{r}\left( y\right) \right\vert \rho
\left( y\right) \frac{dy}{\left\vert y\right\vert ^{d+\alpha }}dr,t\in
\lbrack 0,1).
\end{equation*}%
Hence by Lemma \ref{lem3} (Corollary II in \cite{le}), for any $p\in \left(
0,1\right) $ there is $C=C\left( p\right) $ such that for any stopping time $%
\tau ,$%
\begin{equation*}
\mathbf{E}\left[ \sup_{t\leq \tau }\left\vert \int_{0}^{t}\int H_{r}\left(
y\right) q\left( dr,dy\right) \right\vert ^{p}\right] \leq C\mathbf{E}\left[
\left( \int_{0}^{\tau }\int \left\vert H_{r}\left( y\right) \right\vert \rho
\left( y\right) \frac{dy}{\left\vert y\right\vert ^{d+\alpha }}dr\right) ^{p}%
\right] .
\end{equation*}%
(ii) For $p\in \lbrack 1,2],$ by BDG\ inequality, we have as in (\ref{ff1}),%
\begin{eqnarray*}
\mathbf{E}\left[ \sup_{t\leq \tau }\left\vert \int_{0}^{t}\int H_{r}\left(
y\right) q\left( dr,dy\right) \right\vert ^{p}\right] &\leq &C\mathbf{E}%
\left[ \left( \int_{0}^{\tau }\int \left\vert H_{r}\left( y\right)
\right\vert ^{2}\rho \left( y\right) N\left( dr,dy\right) \right) ^{p/2}%
\right] \\
&\leq &C\mathbf{E}\int_{0}^{\tau }\int \left\vert H_{r}\left( y\right)
\right\vert ^{p}\frac{dy}{\left\vert y\right\vert ^{d+\alpha }}dr.
\end{eqnarray*}
\end{remark}

For the sake of completeness we remind two other \textquotedblleft
general\textquotedblright estimates.

\begin{lemma}
\label{ele5}(see e.g. Lemma 4.1 in \cite{lm}) (i) (Kunita's inequality) Let $%
H:[0,1)\times \Omega \times \mathbf{R}_{0}^{d}\rightarrow \mathbf{R}^{m}$ be
a $\mathcal{P\times B}\left( \mathbf{R}_{0}^{d}\right) $-measurable
function, $H:=H_{r}\left( y\right) ,r\in \lbrack 0,1),y\in \mathbf{R}^{d},$
such that for any $T\in \lbrack 0,1)$ a.s.,%
\begin{equation*}
\int_{0}^{T}\int \left\vert H_{r}\left( y\right) \right\vert ^{2}\rho \left(
y\right) \frac{dydr}{\left\vert y\right\vert ^{d+\alpha }}<\infty ,
\end{equation*}%
where $\mathcal{P}$ is a predictable $\sigma $-algebra on $[0,1)\times
\Omega $. Then for each $p\geq 2$ there is $C=C\left( p\right) $ such that
for any stopping time $\tau $, 
\begin{align*}
\mathbf{E}\left[ \sup_{t\leq \tau }\left\vert \int_{0}^{t}\int
H_{r}(y)q(dr,dy)\right\vert ^{p}\right] &\leq C\mathbf{E}\left[
\int_{0}^{\tau }\int \left\vert H_{r}(y)\right\vert ^{p}\rho \left( y\right) 
\frac{dy}{\left\vert y\right\vert ^{d+\alpha }}dr\ \right] \\
&\quad +C\mathbf{E}\left[ \left( \int_{0}^{\tau }\int \left\vert
H_{r}(y)\right\vert ^{2}\rho \left( y\right) \frac{dy}{\left\vert
y\right\vert ^{d+\alpha }}dr\right) ^{p/2}\right] .
\end{align*}

(ii) Let $H:[0,1)\times \Omega \times \mathbf{R}_{0}^{d}\rightarrow \mathbf{R%
}^{m}$ be a $\mathcal{P\times B}\left( \mathbf{R}_{0}^{d}\right) $%
-measurable function, $H:=H_{r}\left( y\right) ,r\in \lbrack 0,1),y\in 
\mathbf{R}^{d},$ such that for any $T\in \lbrack 0,1)$ a.s.,%
\begin{equation*}
\int_{0}^{T}\int \left\vert H_{r}\left( y\right) \right\vert \rho \left(
y\right) \frac{dydr}{\left\vert y\right\vert ^{d+\alpha }}<\infty ,
\end{equation*}%
Then for each $p\geq 1$ there is $C=C\left( p\right) $ such that for any
stopping time $\tau $, 
\begin{align*}
\mathbf{E}\left[ \sup_{t\leq \tau }\left\vert \int_{0}^{t}\int
H_{r}(y)N(dr,dy)\right\vert ^{p}\right] & \leq C\mathbf{E}\left[
\int_{0}^{\tau }\int \left\vert H_{r}(y)\right\vert ^{p}\rho \left( y\right) 
\frac{dy}{\left\vert y\right\vert ^{d+\alpha }}dr\ \right] \\
& \quad +C\mathbf{E}\left[ \left( \int_{0}^{\tau }\int \left\vert
H_{r}(y)\right\vert \rho \left( y\right) \frac{dy}{\left\vert y\right\vert
^{d+\alpha }}dr\right) ^{p}\right] .
\end{align*}
\end{lemma}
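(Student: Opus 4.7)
The plan is to prove both estimates by combining the Burkholder--Davis--Gundy (BDG) inequality with the decomposition $N=q+\nu$, where $\nu(dr,dy)=\rho(y)|y|^{-d-\alpha}\,dy\,dr$, followed by Hölder/Young interpolation and, where necessary, an induction on the exponent.

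For part (i), I would first apply BDG of order $p\geq 2$ to the local martingale $M_{t}=\int_{0}^{t}\!\int H_{r}(y)\,q(dr,dy)$, using that its quadratic variation equals $[M]_{t}=\int_{0}^{t}\!\int|H_{r}(y)|^{2}N(dr,dy)=\sum_{s\le t}|\Delta M_{s}|^{2}$, to get
\[
\mathbf{E}\bigl[\sup_{t\le\tau}|M_{t}|^{p}\bigr]\le C_{p}\,\mathbf{E}\bigl[[M]_{\tau}^{p/2}\bigr].
\]
Splitting $[M]_{\tau}=\int|H|^{2}\,d\nu+\int|H|^{2}\,dq$ and using $(a+b)^{p/2}\le 2^{p/2-1}(a^{p/2}+b^{p/2})$ produces the desired $(\int|H|^{2}\,d\nu)^{p/2}$ term plus the residual $\mathbf{E}[|\int|H|^{2}\,dq|^{p/2}]$. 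To absorb the residual I would iterate: $M'_{t}:=\int|H|^{2}\,dq$ is a martingale with $[M']_{t}=\int|H|^{4}\,dN$, so applying BDG again halves the exponent. The iteration terminates once the outer exponent drops to $[1,2]$, at which point the elementary subadditivity $(\sum a_{i})^{\theta}\le\sum a_{i}^{\theta}$ for $\theta\in(0,1]$ yields $\mathbf{E}[(\int|H|^{2^{k}}N)^{p/2^{k}}]\le\mathbf{E}[\int|H|^{p}\,d\nu]$. Intermediate compensator terms $\mathbf{E}[(\int|H|^{2^{k}}\,d\nu)^{p/2^{k}}]$ for $2<2^{k}<p$ are handled via Hölder's inequality, which gives $\int|H|^{2^{k}}\,d\nu\le(\int|H|^{2}\,d\nu)^{a}(\int|H|^{p}\,d\nu)^{1-a}$ with $a$ determined by $a\cdot 2+(1-a)\cdot p=2^{k}$, and then by Young's inequality to bound the result by a multiple of $\mathbf{E}[(\int|H|^{2}\,d\nu)^{p/2}]+\mathbf{E}[\int|H|^{p}\,d\nu]$.

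For part (ii), the non-compensated integral is written as $\int H\,dN=\int H\,dq+\int H\,d\nu$. The compensator piece satisfies the pathwise bound $\sup_{t\le\tau}|\int_{0}^{t}H\,d\nu|^{p}\le(\int_{0}^{\tau}|H|\,d\nu)^{p}$, which is precisely the second term on the right. For the martingale piece $\int H\,dq$: when $1\le p\le 2$, BDG of order $p$ combined with $(\sum|\Delta M|^{2})^{p/2}\le\sum|\Delta M|^{p}$ gives $\mathbf{E}[\sup|\int H\,dq|^{p}]\le C\,\mathbf{E}[\int|H|^{p}\,d\nu]$; when $p>2$, part (i) gives the same bound plus a residual $\mathbf{E}[(\int|H|^{2}\,d\nu)^{p/2}]$, which I control by the Hölder step
\[
\int|H|^{2}\,d\nu\le\bigl(\textstyle\int|H|\,d\nu\bigr)^{\frac{p-2}{p-1}}\bigl(\textstyle\int|H|^{p}\,d\nu\bigr)^{\frac{1}{p-1}},
\]
raised to $p/2$ and followed by Young's inequality with conjugate exponents $\frac{2(p-1)}{p-2}$ and $\frac{2(p-1)}{p}$, yielding $(\int|H|^{2}\,d\nu)^{p/2}\le\varepsilon(\int|H|\,d\nu)^{p}+C_{\varepsilon}\int|H|^{p}\,d\nu$.

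The main obstacle is the bookkeeping of exponents during the induction in part (i): one must verify that each iterate of BDG produces only terms that can ultimately be interpolated between $\mathbf{E}[\int|H|^{p}\,d\nu]$ and $\mathbf{E}[(\int|H|^{2}\,d\nu)^{p/2}]$ via a single Hölder/Young application, without introducing terms involving $\sup_{r,y}|H_{r}(y)|$ or other uncontrolled quantities. Once the correct interpolation exponents are identified (as above), the argument closes cleanly and the constant depends only on $p$; everything else is a routine application of BDG and the decomposition of $N$.
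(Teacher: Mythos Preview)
The paper does not supply its own proof of this lemma: it is stated in the Appendix with the attribution ``(see e.g.\ Lemma~4.1 in \cite{lm})'' and no argument is given. So there is nothing in the paper to compare your proposal against line by line.

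That said, your sketch is correct and is essentially the classical Kunita argument that the cited reference would also use. The key steps---BDG applied to $[M]_{\tau}=\int|H|^{2}\,dN$, the split $\int|H|^{2}\,dN=\int|H|^{2}\,d\nu+\int|H|^{2}\,dq$, the iteration that replaces $|H|^{2}$ by $|H|^{2^{k}}$ while halving the outer exponent until it lands in $[1,2]$, the terminal subadditivity $(\sum a_i)^{\theta}\le\sum a_i^{\theta}$ for $\theta\le 1$, and the H\"older/Young interpolation of the intermediate compensator terms between $(\int|H|^{2}\,d\nu)^{p/2}$ and $\int|H|^{p}\,d\nu$---are all sound, and your exponent bookkeeping in both parts checks out (in particular the conjugate pair $\tfrac{2(p-1)}{p-2}$, $\tfrac{2(p-1)}{p}$ in part~(ii) is exactly right). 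The only omission is the routine localization needed to justify applying BDG when the integrands are merely locally integrable; once one truncates $H$ and passes to the limit by monotone convergence, the argument closes.
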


\begin{remark}
\label{re3}Let $H:[0,1)\times \Omega \times \mathbf{R}_{0}^{d}\rightarrow 
\mathbf{R}^{m}$ be a $\mathcal{P\times B}\left( \mathbf{R}_{0}^{d}\right) $%
-measurable function, $H:=H_{r}\left( y\right) ,r\in \lbrack 0,1),y\in 
\mathbf{R}^{d},$ such that for any $T\in \lbrack 0,1)$ a.s.,%
\begin{equation*}
\int_{0}^{T}\int \left\vert H_{r}\left( y\right) \right\vert \rho \left(
y\right) \frac{dydr}{\left\vert y\right\vert ^{d+\alpha }}<\infty .
\end{equation*}%
(i) Since $N\left( dr,dy\right) $-stochastic integral is a sum$,$ a.s. for
every $p\in \left( 0,1\right) ,t\in \lbrack 0,1)$,%
\begin{equation*}
\left\vert \int_{0}^{t}\int H_{r}\left( y\right) N\left( dr,dy\right)
\right\vert ^{p}\leq \int_{0}^{t}\int \left\vert H_{r}\left( y\right)
\right\vert ^{p}N\left( dr,dy\right) .
\end{equation*}%
Hence for any stopping time $\tau ,$%
\begin{equation*}
\mathbf{E}\left[ \sup_{t\leq \tau }\left\vert \int_{0}^{t}\int H_{r}\left(
y\right) N\left( dr,dy\right) \right\vert ^{p}\right] \leq \mathbf{E}%
\int_{0}^{\tau }\int \left\vert H_{r}\left( y\right) \right\vert ^{p}\rho
\left( y\right) \frac{dydr}{\left\vert y\right\vert ^{d+\alpha }}.
\end{equation*}%
(ii) On the other hand, $Z_{t}=\left\vert \int_{0}^{t}\int H_{r}\left(
y\right) N\left( dr,dy\right) \right\vert ,t\in \lbrack 0,1),$ is obviously
dominated by 
\begin{equation*}
A_{t}=\int_{0}^{t}\int |H_{r}\left( y\right) |\rho \left( y\right) \frac{dydr%
}{\left\vert y\right\vert ^{d+\alpha }},t\in \lbrack 0,1).
\end{equation*}%
By Lemma \ref{lem3}, for each $p\in \left( 0,1\right) $, there is $C=C\left(
p\right) >0$ so that%
\begin{equation*}
\mathbf{E}\left[ \sup_{t\leq \tau }\left\vert \int_{0}^{t}\int H_{r}\left(
y\right) N\left( dr,dy\right) \right\vert ^{p}\right] \leq C\mathbf{E}\left[
\left( \int_{0}^{\tau }\int \left\vert H_{r}\left( y\right) \right\vert \rho
\left( y\right) \frac{dydr}{\left\vert y\right\vert ^{d+\alpha }}\right) ^{p}%
\right]
\end{equation*}
\end{remark}

We use the following simple statement about derivation of a global estimate
from a local one.

\begin{lemma}
$\,$ \label{gle}Let $Z_{t},t\in \lbrack 0,1],$ be a nonnegative c\`{a}dl\`{a}%
g stochastic process, $Z_{0}=0$ and $p>0$. Assume there is $\delta \in
\left( 0,1\right) $ and $N,L>0$ such that for any $0\leq S\leq T<1~$with $%
\left\vert T-S\right\vert \leq \delta ,$ we have%
\begin{equation*}
\mathbf{E}\left[ \sup_{S\leq t\leq T}Z_{t}^{p}\right] \leq N[\mathbf{E}\left[
Z_{S}^{p}\right] +L].
\end{equation*}%
Then there is $C=C(\delta ,L,N)$ so that%
\begin{equation*}
\mathbf{E}\left[ \sup_{0\leq t\leq 1}Z_{t}^{p}\right] \leq CL.
\end{equation*}
\end{lemma}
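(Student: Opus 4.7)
The plan is to iterate the local one-step estimate over a finite partition of $[0,1]$ into subintervals of length at most $\delta$, reducing the global bound to a linear recursion.

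Fix an integer $K$ with $1/K\le \delta$ and set $t_{k}=k/K$ for $k=0,1,\dots ,K$, so each subinterval $[t_{k-1},t_{k}]$ has length $\le \delta $. Define
\[
a_{k}=\mathbf{E}\!\left[\sup_{0\le t\le t_{k}}Z_{t}^{p}\right],\qquad k=0,\dots ,K,
\]
which is well defined because $Z$ is c\`adl\`ag; note $a_{0}=0$ since $Z_{0}=0$. The key observation is that $\sup_{0\le t\le t_{k}}Z_{t}^{p}\le \sup_{0\le t\le t_{k-1}}Z_{t}^{p}+\sup_{t_{k-1}\le t\le t_{k}}Z_{t}^{p}$, so by the hypothesis applied with $S=t_{k-1}$, $T=t_{k}$,
\[
a_{k}\le a_{k-1}+\mathbf{E}\!\left[\sup_{t_{k-1}\le t\le t_{k}}Z_{t}^{p}\right]\le a_{k-1}+N\!\left[\mathbf{E}\,Z_{t_{k-1}}^{p}+L\right]\le (1+N)a_{k-1}+NL,
\]
where the last bound uses $\mathbf{E}\,Z_{t_{k-1}}^{p}\le a_{k-1}$.

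Adding $L$ to both sides turns this into the geometric recursion $a_{k}+L\le (1+N)(a_{k-1}+L)$. Iterating and using $a_{0}=0$ yields $a_{K}+L\le (1+N)^{K}L$, i.e.
\[
\mathbf{E}\!\left[\sup_{0\le t\le t_{K}}Z_{t}^{p}\right]\le \bigl((1+N)^{K}-1\bigr)L=:CL,
\]
with $C=C(\delta ,N)$ depending only on $\delta $ and $N$ (through $K=\lceil 1/\delta\rceil$).

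The only mild subtlety is that the hypothesis is stated for $T<1$, whereas we want the $\sup $ over $[0,1]$. This is handled by running the above argument with $t_{K}=1-\varepsilon $ for arbitrarily small $\varepsilon >0$ (adjusting $K$ so that each subinterval still has length $\le \delta $) and passing to the limit via monotone convergence, using that $\sup_{0\le t\le 1}Z_{t}^{p}=\lim_{\varepsilon \downarrow 0}\sup_{0\le t\le 1-\varepsilon }Z_{t}^{p}\vee Z_{1}^{p}$ and that $Z_{1}^{p}$ is already controlled by the limit because $Z$ is c\`adl\`ag. There is no essential obstacle; the whole proof is a routine telescoping of the local estimate, and the constant $C$ is explicit in $(\delta ,N)$.
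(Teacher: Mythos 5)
Your proof takes essentially the same approach as the paper: partition $[0,1]$ into $\lceil 1/\delta\rceil$ subintervals of length at most $\delta$, apply the local estimate on each, and iterate the resulting linear recursion; tracking the cumulative sup $a_k=\mathbf{E}\bigl[\sup_{0\le t\le t_k}Z_t^p\bigr]$ rather than the per-interval sup, as the paper does, merely changes the recursion coefficient from $N$ to $1+N$ and yields an equivalent constant. One small inaccuracy in your closing remark: a c\`adl\`ag process may jump at $t=1$, so $Z_1^p$ is \emph{not} controlled by $\lim_{\varepsilon\downarrow 0}\sup_{0\le t\le 1-\varepsilon}Z_t^p$ as you assert; the paper's own proof silently applies the hypothesis with $T=S_{N_0}=1$ anyway, and in every application of the lemma the local estimate does hold up to $T=1$, so the constraint $T<1$ in the statement is a harmless imprecision rather than something your limiting argument actually repairs.
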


\begin{proof}
We partition $[0,1]$ into $N_{0}$ subintervals of length $N_{0}^{-1}\leq
\delta $. Let $S_{k}=k/N_{0},k=0,\ldots ,N_{0}$, and%
\begin{equation*}
A_{k}=\mathbf{E}\left[ \sup_{S_{k-1}\leq t\leq S_{k}}Z_{t}^{p}\right]
,k=1,\ldots ,N_{0}.
\end{equation*}%
then, 
\begin{eqnarray*}
A_{k} &\leq &NA_{k-1}+NL,k=2,\ldots ,N_{0}, \\
A_{1} &\leq &NL,
\end{eqnarray*}
and then%
\begin{equation*}
A_{k}\leq \left( N^{k}+\ldots +N\right) L=C_{k}L,k=1,\ldots ,N_{0}.
\end{equation*}%
Therefore, 
\begin{equation*}
\mathbf{E}\left[ \sup_{0\leq t\leq 1}Z_{t}^{p}\right] \leq (C_{1}+\ldots
+C_{N_{0}})L.
\end{equation*}
\end{proof}

\end{document}